\documentclass[12pt]{amsart}
\usepackage{amssymb, amsthm, amsmath, amstext}
\usepackage{mathrsfs}  
\usepackage{bm}        
\usepackage{mathtools} 
\usepackage[russian,english]{babel}

\usepackage[left=1.28in,top=.8in,right=1.28in,bottom=.8in]{geometry}
\usepackage{comment}
\usepackage{graphicx}
\usepackage[all]{xy}
\usepackage{xcolor}
\usepackage[utf8]{inputenc}

\theoremstyle{plain}
\newtheorem{theorem}{Theorem}[section]
\newtheorem{prop}[theorem]{Proposition}
\newtheorem{lemma}[theorem]{Lemma}

\theoremstyle{definition}

\theoremstyle{remark}

\newcommand{\sheaf}[1]{\mathscr{#1}}

\newcommand{\PP}{\sheaf{P}}
\newcommand{\XX}{\sheaf{X}}

\newcommand{\DD}{\sheaf{D}}
\newcommand{\QQ}{\sheaf{Q}}

\newcommand{\UU}{\sheaf{U}}

\newcommand{\Z}{\mathbb Z}

\usepackage{hyperref}
\usepackage{color}

\hypersetup{colorlinks=true,linkcolor=blue,anchorcolor=blue,citecolor=blue,letterpaper=true}

\author{Zitong Pei}
\address{Department of Mathematics, Virginia Tech, Blacksburg, VA 24061, USA}
\email{zitongpei@vt.edu}

\begin{document}
\pagestyle{plain}
\title{REDUCED UNITARY WHITEHEAD GROUPS OVER FUNCTION FIELDS OF $p$-adic CURVES}

\date{}

\begin{abstract}     Let $F_0$ be the function field of a curve over a $p$-adic field $K,$ and let $F$ be a quadratic extension over $F_0$. Let $A$ be a central simple algebra over $F$ of period $2,$ and let $\tau$ be a $F/F_0$-involution on $A$. 
 We show the triviality of the reduced unitary Whitehead group $SK_1U( A, \tau)$ if $p\neq 2$. 
\end{abstract} 
 
\maketitle

\def\ZZ{${\mathbf Z}$}
\def\ih{${\mathbf H}$}
\def\RR{${\mathbf R}$}
\def\IF{${\mathbf F}$}
\def\QQ{${\mathbf Q}$} 
\def\IP{${\mathbf P}$}

\section{Introduction}
Let $F_0$ be an infinite field.  Let $G$ be a smooth connected linear algebraic group over $F_0$  and  $G(F_0)$ denote the group of 
$F_0$-rational points of $G.$ Two points $x,y \in G(F_0)$ are defined to be $R$-equivalent if there is a rational map $f: \mathbb{A}_{F_0}^1 
\dashrightarrow G$ such that $f(0)=x$ and $f(1)=y.$ The definition of $R$-equivalence was introduced by Y. I. Manin in \cite{manin1986} 
when studying cubic hypersurfaces. The $R$-equivalence is actually an equivalent relation, denoted by $'\sim'$, on $G(F_0)$
(cf. \cite{1977reduced}). Let $RG(F_0)$ denote the equivalence  class of the identity $e \in G(F_0)$. Then $RG(F_0)$ 
is a normal subgroup of $G(F_0)$ and there is a bijection of sets between $G(F_0)/\sim$ and $G(F_0)/RG(F_0)$
(cf. \cite{1977reduced}, \cite{CT1977}).

Suppose that $G$ is semi-simple, simply connected, isotropic, and simple over $F_0$. Let $G(F_0)^{+}$ denote the normal subgroup of $G(F_0)$ generated by the $F_0$-rational points of the unipotent radicals of parabolic $F_0$-subgroups of $G$. The quotient group ${G(F_0)}/{G(F_0)^+}$ is called the Whitehead group for $G$ over $F_0$ and denoted by $Wh(G, F_0)$. It is known that $Wh(G, F_0) \cong G(F_0)/RG(F_0)$(cf. \cite{AST2009}). The Kneser-Tits conjecture (\cite{Tits1978}) predicted the triviality of the Whitehead group $Wh(G, F_0)$. 

In \cite{platonov1975}, V. P. Platonov constructed a counterexample to the Kneser-Tits conjecture, thereby demonstrating that the conjecture has a negative answer. Platonov's counterexample is based on the algebraic principle of $Wh(G, F_0)$, which inspires the exploration of the underlying algebraic background introduced in the following discussion.

Let $A$  be  a central simple $F_0$-algebra. Let $SL_{1}(A)$ denote the subgroup $\{a \in A^*|Nrd_{A/F_0}(a)=1\}$ and $[A^*, A^*]$
 denote the commutator subgroup of $A^*$. The quotient group ${SL_{1}(A)}/{[A^*, A^*]},$ 
 denoted by $SK_1(A),$ is called the {\it reduced Whitehead } group of $A$ over $F$. 
 Furthermore, $SK_1(A)$ only depends on the Brauer class of $A$ in $Br(F_0)$ (cf. \cite{salgebra}).
  In \cite{1977reduced}, it was proved that $Wh(G, F_0) \cong SK_1(A)$ if $G = SL_1(A).$

Let  $F$ be   a quadratic field extension of $F_0$ and $A$ is a central simple $F$-algebra. 
 Assume that $A$ has a unitary involution $\tau$ such that $ F_0=\{x \in F|\tau(x)=x\}.$
  In this case, we also say that $A$ has a $F/F_0$-involution $\tau$. 
  Let $\Sigma_{\tau}'(A^*)=\{a\in A^*|Nrd_{A/F}(a) \in F_0\}$ and $\Sigma_{\tau}(A^*)$ the subgroup of $A^*$  generated by 
  $ \{b\in A^*|\tau(b)=b\} $. 
   Then $\Sigma_{\tau}'(A^*)$ is a subgroup of $A^*$, and $\Sigma_{\tau}(A^*)$ is a normal subgroup of $\Sigma'_{\tau}(A^*)$. 
Let $D$ be a central division $F$-algebra such that $A \cong M_n(D)$ for some integer $n$. Then there is a unitary involution $\tau'$ on $D$ such that $\tau'|_{F}=\tau|_{F}$ and  $\Sigma'_{\tau}(A)/\Sigma_{\tau}(A) \cong \Sigma'_{\tau'}(D)/\Sigma_{\tau'}(D)$ (\cite[Lemma 2]{1974simple}). Let $V$ and $\Phi$ be, respectively, a right finite dimensional $D$-vector space and a skew hermitian form with respect to $\tau'$ on $V \times V$ of positive Witt index. Let $U(\Phi)$ denote the unitary group of $\Phi$, $TU(\Phi)$ denote the subgroup of those elements of $U(\Phi)$ generated by the unitary transvections, and $SU(\Phi)$ denote the subgroup of those elements of $U(\Phi)$ whose reduced norms are equal to unity. If the dimension $dim_{D}(V) >1,$ then $\Sigma_{\tau}'(A)/\Sigma_{\tau}(A) \cong \Sigma_{\tau'}'(D)/\Sigma_{\tau'}(D) \cong SU(\Phi)/TU(\Phi)$ (cf. \cite[II, Chapter 4, \S3.2]{salgebra}). Thus, we can always view $A$ as a central division $F$-algebra. The quotient group $\Sigma_{\tau}'(A)/\Sigma_{\tau}(A),$ denoted by $SK_1U(A, \tau)$, is called the {\it 
reduced unitary Whitehead group} of $A$ over $F$. In \cite{1977reduced}, it was proved that $W(G, F_0)\cong SK_1U( A, \tau)$ if $G =SU(\Phi)$.

It is known that if the index of $A$ is a prime, then both groups 
$SK_1(A)$ and $SK_1U(A, \tau)$ are trivial (\cite{Wang}, \cite{1974simple}). 
There are also examples of fields $F$ and algebras $A$ such that 
$SK_1(A)$ and $SK_1U(A, \tau)$ are non trivial.

If the cohomological dimension of $F$ is at most 2, then both the groups are trivial (\cite{1975commutator}, \cite{1974simple}). 
A conjecture of Suslin asserts that if the cohomological dimension of $F$ is  3, 
then both the groups $SK_1(A)$ and $SK_1U(A, \tau)$ are trivial (\cite{Suslin}).  

If $F$ is a complete discretely valued field with residue field a field of cohomological dimension 2, then 
the cohomological dimension of $F$ is 3 and in this case the above conjecture of Suslin is answered 
in affirmative(\cite{soman}). 

The triviality of $SK_1(A)$ or $Sk_1U(A, \tau)$ depends on the choice of the field $F$. In this paper, we primarily focus on the function field of a $p$-adic curve. In \cite{bh}, Nivedita established the triviality of $SK_1(A)$ for the function fields of $p$-adic curves under certain conditions. 

\begin{theorem}(\cite[Theorem 13.8]{bh})
Let $F$ be the function field of a $p$-adic curve. Let $D$ be a central division $F$-algebra of prime exponent $\ell$. If $\ell \neq 2$ , $\ell \neq p,$ and $F$ contains a primitive $\ell^{2th}$ root of unity, then $SK_1(A)$ is trivial.

\end{theorem}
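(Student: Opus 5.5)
This is a cited result, \cite[Theorem 13.8]{bh}. The natural route is the local--global strategy used for Suslin's conjecture in cohomological dimension $3$. The first step is to reduce to the case where the index of $D$ is not prime: by \cite{Wang}, $SK_1(D)$ is trivial when the index is prime. So assume $\mathrm{ind}(D)=\ell^m$ with $m\ge 2$.

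Fix $a\in SL_1(D)$; the goal is to show $a\in[D^*,D^*]$. The plan has three stages.
\begin{enumerate}
\item Choose a regular proper model $\XX$ of the curve over the valuation ring of $K$, and a finite set $\PP$ of closed points of the special fiber. Choose $\PP$ so that it contains all points where the ramification divisor of $D$ (together with the divisor of the data attached to $a$) is not in normal crossings. Let $\UU$ denote the components of the complement of $\PP$ in the special fiber.
\item Use the patching framework of Harbater--Hartmann--Krashen, together with the field patching of Colliot-Thélène--Parimala--Suresh for $R$-equivalence. The aim is to show that $a$ is trivial in $SK_1(D)$ as soon as it is trivial in $SK_1(D\otimes F_P)$ and $SK_1(D\otimes F_U)$ for all $P\in\PP$ and $U\in\UU$.
\item Since $Wh(SL_1(D),F)\cong SK_1(D)$ by \cite{1977reduced}, this is a local--global principle for $R$-equivalence classes. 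For the rational group $SL_1(D)$ it should follow from patching for $G(F)/R$: factor the local triviality through $G(F_{\wp})$ on branches and apply simultaneous factorization.
\end{enumerate}

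Next comes the local computation. Each $F_P$ and $F_U$ is (a field of fractions of) a complete two-dimensional regular local ring, or a complete discretely valued field whose residue field is a function field of a curve over a finite field, or a local field of dimension $2$. These fields have cohomological dimension $3$ in the relevant $\ell$-part, but they are ``$2$-dimensional complete'' in a strong sense. Over a complete discretely valued field $F_U$ with residue field $\kappa$ of cohomological dimension $\le 2$, triviality of $SK_1$ for period $\ell$ is known from \cite{soman}. For the points $F_P$, I would use the normal-crossing description of $D\otimes F_P$. It is, up to Brauer equivalence, a tensor product of a cyclic algebra with an unramified part, of the form $(u,\pi)\otimes(v,\delta)$ or $(u\pi^i,\delta)$. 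Using the $\ell^2$-th roots of unity, one can write explicit norm computations via Platonov's formula $SK_1(D)\cong$ a quotient involving $\Sigma$ of residue data. These should show $SK_1(D\otimes F_P)=1$.

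The main obstacle is this last local step at the points $P$ where two ramification curves cross. There $D\otimes F_P$ may have index $\ell^2$ and be a biquaternion-like (bicyclic) algebra. I would first try to show that $D\otimes F_P$ becomes split by a cyclic extension of degree $\ell$ in a controlled way, ramified along one branch. Then I would apply the exact sequence of Suslin/Merkurjev relating $SK_1$ to $H^4(F_P,\mu_\ell^{\otimes 3})$. The hypotheses $\ell\neq p$ and $\mu_{\ell^2}\subset F$ are exactly what make $H^4$ vanish and the degree-$\ell$ descent work. Finally, the local vanishing at all $P$ and $U$ combines with the patching principle of stage 2 to conclude that $a$ is trivial in $SK_1(D)$, so $SK_1(D)$ is trivial.
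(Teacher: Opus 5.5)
The paper does not prove this statement; it is quoted from \cite{bh}. Your outline therefore has to stand on its own, and it does not: everything rests on stages 2--3, i.e.\ on injectivity of $SK_1(D)\to\prod_{\xi}SK_1(D\otimes_F F_\xi)$ over the patch fields $F_P$, $F_U$. You give no argument for this.

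\begin{itemize}
\item Calling $SL_1(D)$ a rational group is circular. A rational group is $R$-trivial, so rationality would already give $SK_1(D)=SL_1(D)(F)/R=1$ with no patching at all. For index $\ell^2$ it is also false in general, since $SK_1$ becomes nontrivial over suitable extensions by Merkurjev's generic-element theorem.
\item More fundamentally, Harbater--Hartmann--Krashen factorization glues group elements and torsors, not $R$-equivalences. If $a$ is a product of commutators in each $D\otimes F_P$ and $D\otimes F_U$, those local expressions need not agree over the branch fields $F_\wp$. Nothing in your plan makes them compatible.
\end{itemize}

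A Hasse principle for $G(F)/R$ with $G=SL_1(D)$ non-rational is a substantive statement of essentially the same strength as the theorem, and you have assumed it. A telling sign: your outline never genuinely uses $\ell\neq 2$ or $\mu_{\ell^2}\subset F$. If stage 2 were available, the argument would prove the result without these hypotheses, so all the difficulty sits in the unproven step.

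The local analysis is also misaimed.

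\begin{itemize}
\item At a closed point $P$ where the ramification is in normal crossings, $\kappa(P)$ is finite. Hence $\mathrm{Br}(\hat{R}_P)=0$ and $\hat{R}_P^{*}/\hat{R}_P^{*\ell}$ has order at most $\ell$. So $D\otimes F_P$ has index at most $\ell$; this is the $\ell$-analogue of Lemma \ref{2dim-quat}, which the paper uses to get $\mathrm{ind}(D\otimes F_{0P})\le 2$. Wang's theorem \cite{Wang} then settles $P$ directly. Your ``main obstacle'' does not exist there; index $\ell^2$ occurs only at the generic points $\eta$ of the special fibre.
\item $H^4(F_P,\mu_\ell^{\otimes 3})$ vanishes for cohomological-dimension reasons, independently of any roots of unity. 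What your $H^4$ step would actually need is injectivity of Suslin's invariant in index $\ell^2$, which you do not justify.
\item $F_U$ is not a complete discretely valued field. \cite{soman} applies to $F_\eta$, and passing from $F_\eta$ to some $F_U$ needs a separate shrinking argument, as in Proposition \ref{atu}.
\end{itemize}

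For comparison, look at the paper's own patching argument for its unitary analogue (Theorem \ref{thm9.1}). It never patches Whitehead-group classes. It patches only square classes $a_i$, and elements $\mu_i$ of the rational tori $R_{L_i/F_0}\mathbb{G}_m$ up to high $2$-power adjustments, for which factorization is available. It also uses the local--global principle for the index. Only then does it invoke known triviality results for algebras of small index. A device of this kind, replacing your assumed Hasse principle for $SK_1$, is what your plan is missing.
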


The main aim of this paper is to prove the following theorem .

\begin{theorem}(cf. Theorem \ref{thm9.2} )
     Let $F_0$ be the function field of a $p$-adic curve. Let $F$ be a quadratic field extension of $F_0$. Let $A$ be a central simple $F$-algebra of the exponent $2$. If $A$ has a $F/F_0$-involution $\tau$ and $p \neq 2$, then $SK_1U(\tau, A)$ is trivial. 
  
\end{theorem}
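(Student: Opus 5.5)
The plan is to follow the local-global strategy of Parimala--Suresh type patching, combined with the known triviality over complete discretely valued fields with residue field of cohomological dimension $2$ (\cite{soman}). First, since $A$ has exponent $2$ over $F$, the function field of a $p$-adic curve with $p\neq 2$, the index of $A$ divides $4$ (period--index bound $\mathrm{ind}\mid \mathrm{per}^2$ for such fields). If the index is $1$ or $2$ (a prime), then $SK_1U(A,\tau)$ is trivial by \cite{1974simple}. So the essential case is that $A$ is a biquaternion division algebra with a unitary $F/F_0$-involution $\tau$; by the reductions recalled in the introduction we may assume $A=D$ is division of degree $4$.

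Next, fix $a\in \Sigma'_\tau(A^*)$, so $\lambda=Nrd_{A/F}(a)\in F_0$. Choose a regular proper model $\mathcal X$ of the curve over the valuation ring of $K$ such that the ramification divisors of $A$, of $F/F_0$, and of $\lambda$ lie in a union of regular components with normal crossings. For each codimension-one point $x$ and each closed point $P$ on the special fibre, the completions $F_{0,x}$, $F_{0,P}$ (and the patches of Harbater--Hartmann--Krashen) are fields over which the class of $a$ in $SK_1U(A\otimes F_{0,x},\tau)$ vanishes: for $F_{0,x}$, a complete discretely valued field with residue field a function field of a curve over a finite field or a $p$-adic field, i.e.\ of cohomological dimension $2$, this is \cite{soman} (applied to $F\otimes F_{0,x}$, treating separately the split case $F\otimes F_{0,x}\cong F_{0,x}\times F_{0,x}$, where $SK_1U$ becomes $SK_1$ of $A_x$ and is trivial by cd $\le 2$-type results \cite{1975commutator}). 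For the two-dimensional local rings at closed points, I would use explicit descriptions of biquaternion algebras with involution over $\widehat{\mathcal O}_{\mathcal X,P}$ (ramification along at most two parameters $\pi,\delta$) to show directly that $a$ is a product of symmetric elements.

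Finally, pass from local to global. Here I would use the interpretation $SK_1U(A,\tau)\cong G(F_0)/R$ for $G=SU(\Phi)$ (or the group $SU(A,\tau)$ with the $R$-equivalence description) and the patching result that $R$-equivalence classes satisfy a local-global principle for rational connected groups: an element of $G(F_0)$ that is $R$-trivial on each patch is $R$-trivial globally, using that $G(F_0)\to\prod G(F_{0,P})\times G(F_{0,U})$ factorization holds for connected rational groups (Harbater--Hartmann--Krashen). Concretely, write $a$ locally as products of symmetric elements, approximate these by global symmetric elements $b$ so that $ab^{-1}$ is close to $1$ in each patch, and then use a factorization of $ab^{-1}$ into patch-wise elements lying in $\Sigma_\tau$.

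The main obstacle is this last step together with the closed-point analysis: making the local triviality at closed points of the special fibre explicit for biquaternion algebras with unitary involution (where $F/F_0$ may itself ramify along the special fibre), and ensuring the approximation/factorization respects the symmetric-element generating set. I would first try to reduce, via Albert forms and the known structure that a degree $4$ algebra with unitary involution corresponds to a quadratic form of dimension $6$ over $F_0$ ($\lambda$ being a spinor-norm-type value), to showing that $\lambda$ is a norm from a suitable subalgebra of symmetric elements, then use the Hasse principle for quadratic forms over $F_0$ (Colliot-Thélène--Parimala--Suresh) to obtain this globally.
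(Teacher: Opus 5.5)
Your reduction to the biquaternion case (index dividing $4$ by Saltman's bound, prime index handled by Yanchevskii's theorem) and your choice of a normal-crossings model are fine. The local-to-global step, however, is a genuine gap.

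No available theorem says that $G(F_0)/R\to\prod_\xi G(F_{0\xi})/R$ is injective over a semi-global field, and this does not follow from Harbater--Hartmann--Krashen factorization. Factorization $G(F_{0\wp})=G(F_{0U})G(F_{0P})$ is proved for rational groups. It yields local--global principles for torsors (injectivity on $H^1$), not for $R$-equivalence classes of rational points. Moreover, for a rational group $G(F_0)/R$ is already trivial, so invoking it for $G=SU(\Phi)$ presupposes the theorem.

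Your concrete version does not close the gap either. The local expressions of $a$ as products of symmetric elements on different patches are unrelated, so there is no single global $b\in\Sigma_\tau(A)$ close to all of them. Even for a global $c=ab^{-1}$ close to $1$ on every patch, nothing puts $c$ in $\Sigma_\tau(A)$. Factoring in the rational group $GL_1(A)$ only gives factors in $(A\otimes F_{0\xi})^*$, and these need not even lie in $\Sigma'_\tau$, since their reduced norms need not lie in $F_{0\xi}$.

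Two smaller points. The Albert-form fallback does not work as stated: $(A,\tau)$ corresponds to its discriminant algebra, a degree-$6$ algebra with orthogonal involution over $F_0$ that need not be split. And a Hasse principle for isotropy of quadratic forms does not control spinor-norm-type membership. Also, in the split case at a codimension-one point, $F_{0,x}$ has cohomological dimension $3$, so the $\mathrm{cd}\le 2$ results you cite do not apply there.

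The missing idea is to move the whole problem to reduced norms, where all data live in rational groups and therefore patch. The paper proves (Theorem~\ref{thm9.1}) that $\lambda=\mathrm{Nrd}(a)$ can be written as $\prod_{i=1}^{3}N_{L_i/F_0}(\mu_i)$ with $L_i=F_0(\sqrt{a_i})$, $\mathrm{ind}(D\otimes L_i)\le 2$, and $\mu_i\in\mathrm{Nrd}(D\otimes L_i)$.

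Such decompositions are built explicitly in two places:
\begin{itemize}
\item at codimension-one points of the closed fibre, by lifting from global residue fields via weak approximation;
\item at closed points, by using the classification of period-$2$ algebras over two-dimensional complete local rings with finite residue field.
\end{itemize}
They are then patched. The square classes $a_i$, the elements $\mu_i$ (adjusted by $2^{2t}$-th powers so the norm and reduced-norm conditions survive), and the index bound all patch.

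After that, no local--global principle for $SK_1U$ is needed. Triviality of $SK_1U$ for index $\le 2$ over $L_i$, together with the norm principle, gives $d_i\in\Sigma_\tau(A)$ with $\mathrm{Nrd}(d_i)=N_{L_i/F_0}(\mu_i)$. Triviality of $SK_1(A)$ (index $4$, $\mathrm{cd}(F)=3$) then gives $a^{-1}\prod_i d_i\in[A^*,A^*]\subset\Sigma_\tau(A)$. Your plan uses neither $SK_1(A)=1$ nor a norm principle, and these are exactly what convert a patchable identity among reduced norms into membership in $\Sigma_\tau(A)$.
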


Here is an outline of the structure of the paper. For any $a \in \Sigma_{\tau}'(A^*)$, it needs to prove that $a \in \Sigma_{\tau}(A^*)$. We will use patching techniques developed by Harbater, Hartmann and Krashen (cf. \cite{h2009}, \cite{h2010}, \cite{h2014}, \cite{h2015}) to find three extensions of the function field $F$ satisfying certain conditions (cf. Theorem \ref{thm9.1} ). Let $\XX$ be a regular model of the $p$-adic curve, and $\XX_o$ be the reduced scheme of the closed fiber of $\XX$.  The scenario of patching is to locally study cases at generic points and closed points of $\XX_o$, then deduces the case of the field $F$ globally. In fact, we can find a decomposition of $a$ such that $a$ is a product of $\tau$-symmetric elements by globally constructing three extensions of $F$. In Section \ref{section2}, we first find a suitable model $\XX$ of the $p$-adic curve by resolution of singularities, which leads to conditions satisfying the application of patching techniques. In Section \ref{section3}, we prove weak approximations over global fields. In Section \ref{section4}, we provide an alternative proof of the triviality of  $SK_1U(A, \tau)$ for complete discretely valued fields, which is a local version of the main theorem. This is for the local data at generic points of $\XX_o$.  In Section \ref{section5}, we study the division algebras over two dimensional complete fields. It is used to deal with nodal points of $\XX_o$. In Section \ref{section6} \& \ref{section7} we give explicit constructions of  the local data both at nodal points, non-nodal closed points, and generic points of $\XX_o$. In Section \ref{section8}, we finish all the local data needed in patching techniques through finding suitable non-empty open subsets of $\XX_o$. In Section \ref{section9}, we use patching techniques to deduce the main conclusion.

\section{Preliminaries} \label{section2}
Let  $p > 3$ be  a prime and  $K$ be a  $p$-adic field.
Let $F_0$ be the function field of a curve over $K$ and 
$F = F_0(\sqrt{d})$ a quadratic field extension.  
Let $D$ be a central division algebra over $F$ with a $F/F_0$-involution. 

Let $T$ be the  valuation ring of $K$ and $k$ the residue field of $K$.
Let $\XX_0$ be regular proper model of $F_0$ over $R$ and 
$\XX$ the integral closure of $\XX_0$ in $F$.

We say that  $D$ is  {\it ramified} at  codimension one point  $\eta$ of $\XX_0$,
if $D$ is ramified at  one of the codimension points of $\XX$ lying over $\eta$.
The {\it ramification locus } ram$_{\XX_0}(D)$ of $D$ on $\XX_0$ is defined as 
the set of codimension one points of $\XX_0$ where $D$ is ramified.

There exists a regular proper model $\XX_0$ of $F_0$ such that 
the union of ram$_{\XX_0}(D)$,  the supp$(d)$ and the closed fibre $X_0$
 is a union of regular curves with normal crossings (cf.  \cite{lipman1975},  \cite[Theorem 9.2.16, Proposition 10.1.8]{2002al}).
Further  the integral closure $\XX$  of $\XX_0$ in $F$ is a regular proper model of $F$.
Let $P \in X_0$ be a closed point.
 Let $m_P$ be the maximal ideal at $P$ on $\XX_0$
  and $\kappa(P)$ be the residue field at $P$.

Suppose $P$ is a regular point. 
 Since $X_0$ is a union of regular curves, there exists a unique codimension one point 
 $\eta$ of $X_0$ such that $P \in \overline{\eta}$.

\section{Global fields}\label{section3}
In this section we prove certain weak approximations over global fields which are used in the proof of our main theorem. 
We begin with the following well known result. 

\begin{lemma}
\label{biquad}
 Let $K$  be a field with char$(K) \neq 2$.
Let $u, d, w \in k^*$.  If $u \in N_{K(\sqrt{d}, \sqrt{w})/K(\sqrt{d})}(K(\sqrt{d}, \sqrt{w})^*)$, then 
$u \in N_{ K(\sqrt{w})/K}(K(\sqrt{w})^*) N_{K(\sqrt{wd})/K}(K(  \sqrt{wd})^*)$.
\end{lemma}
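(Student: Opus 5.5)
Write $L=K(\sqrt d)$ and $M=L(\sqrt w)$, and assume $u=N_{M/L}(z)$ for some $z\in M^*$ (reading $u,d,w\in K^*$). The plan is to make this norm equation explicit, split it into cases, and in the main case turn it into a statement about a quaternion algebra over $K$ that can be handled by the multiplicativity of norms from quadratic extensions.

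\emph{Degenerate cases.}
\begin{itemize}
\item If $w\in K^{*2}$, every element of $K^*$ is a norm from $K(\sqrt w)=K\times K$ (split algebra), so the claim is trivial. The same holds if $wd\in K^{*2}$.
\item If $d\in K^{*2}$, then $L=K$, and $u\in N_{K(\sqrt w)/K}$ directly.
\item If $w\in L^{*2}$ but $w\notin K^{*2}$, then $w\in dK^{*2}$, so $wd\in K^{*2}$, which is already covered.
\end{itemize}
So we may assume $d,w,wd\notin K^{*2}$. Then $M/K$ is a biquadratic extension with Galois group $\{1,\sigma,\rho,\sigma\rho\}$, where $\sigma$ fixes $\sqrt w$ and negates $\sqrt d$, while $\rho$ negates $\sqrt w$ and fixes $\sqrt d$.

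\emph{Main case.}
The condition $u\in N_{M/L}(M^*)$ says that the quaternion algebra $(u,w)$ splits over $L$. By the standard description of quaternion algebras split by a quadratic extension, this means $(u,w)\cong(d,c)$ over $K$ for some $c\in K^*$. I would rather argue directly with norms, as follows.

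Write $z=x+y\sqrt w$ with $x,y\in L$. Then $u=x^2-wy^2$. Apply $\sigma$; since $u\in K$, we get $u=\sigma(x)^2-w\sigma(y)^2$. The plan is to decompose $z$ using the intermediate fields: $K(\sqrt w)$ is the fixed field of $\sigma$, and $K(\sqrt{wd})$ is the fixed field of $\sigma\rho$. I would aim to show that
$$z\in K(\sqrt w)^*\cdot K(\sqrt{wd})^*\cdot L^*$$
up to an element whose $N_{M/L}$-norm is controlled. Taking $N_{M/L}$ of such a product gives:
\begin{itemize}
\item $N_{M/L}$ restricted to $K(\sqrt w)$ equals $N_{K(\sqrt w)/K}$;
\item $N_{M/L}$ restricted to $K(\sqrt{wd})$ equals $N_{K(\sqrt{wd})/K}$;
\item elements of $L^*$ contribute squares $\ell^2$ with $\ell\in L^*$.
\end{itemize}

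\emph{Main obstacle.}
The difficulty is that such a decomposition of $z$ need not exist outright, so I expect this step to be the hard one. I would first try Hilbert 90. Since $N_{M/L}(\sigma z/z)=\sigma u/u=1$, we have $\sigma z/z=\rho(t)/t$ for some $t\in M^*$. Replace $z$ by $z'=z\,t^{-1}\cdot(\text{suitable } \sigma\text{-adjustment})$ so that $z'$ becomes $\sigma$-invariant up to a factor lying in $K(\sqrt{wd})$.

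Concretely, I would choose $t$ as $s+\sigma\rho(s)$ for a suitable $s$, which lies in $K(\sqrt{wd})$. Then $z\,t$ is $\sigma$-fixed up to an element of $L$, i.e. $zt\in K(\sqrt w)\cdot L$. Taking norms gives
$$u\,N(t)=N_{K(\sqrt w)/K}(a)\cdot \ell\,\sigma(\ell)\ \ (\text{or }\ell^2),$$
and an element $\ell\in L$ appearing this way would be absorbed using that $N_{L/K}(\ell)\in N_{K(\sqrt w)/K}N_{K(\sqrt{wd})/K}$. That last fact follows from the product formula $(d,\cdot)=(w,\cdot)+(wd,\cdot)$ up to $(w,w)$-type terms.

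\emph{Fallback.}
If this becomes messy, the fallback is the quaternion argument. From $(u,w)_L=0$, the corestriction–restriction identity $(u,w)\otimes_K L=0$ gives $(u,w)\cong(d,c)$. Then $(u,w)+(c,d)=0$, and the common slot lemma yields $e$ with $(u,w)=(e,w)$, $(e,wd)=0$ and $(u/e,w)=0$. Hence $u=e\cdot(u/e)$, with $e\in N_{K(\sqrt{wd})/K}$ and $u/e\in N_{K(\sqrt w)/K}$.
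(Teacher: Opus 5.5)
Your Hilbert~90 ``main case'' has a genuine gap, and you should discard it. The decisive step is producing $t\in K(\sqrt{wd})^*$ with $zt\in K(\sqrt{w})^*L^*$ (your ``suitable $s$''), and it is only asserted. Any two solutions of $N_{M/L}(\,\cdot\,)=u$ differ by an element $\rho(s)/s$. So the existence of such a decomposition for a suitable $z$ is equivalent to the lemma itself, and nothing is proved until that step is.

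The auxiliary fact you invoke is also false in general: it is not true that $N_{L/K}(\ell)\in N_{K(\sqrt{w})/K}N_{K(\sqrt{wd})/K}$ for every $\ell\in L^*$. Take $K=\mathbb{Q}$, $d=-1$, $w=2$ and $\ell=2+i$, so $N_{L/K}(\ell)=5$. Both $\mathbb{Q}_5(\sqrt{2})$ and $\mathbb{Q}_5(\sqrt{-2})$ are the unramified quadratic extension of $\mathbb{Q}_5$. Hence every element of the product of the two norm groups has even $5$-adic valuation, and $5$ is not in it. For $\ell\in L$ what actually appears is $N_{M/L}(\ell)=\ell^2$. Since $\ell^2\in K^*$ forces $\ell^2\in K^{*2}\cup dK^{*2}$, and $dw^2=(-w)(-wd)$, that term is harmless, but this does not supply the missing decomposition.

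Your fallback, however, is a complete and correct proof, by a route different from the paper's. Two points should be stated precisely.
\begin{itemize}
\item $(u,w)\otimes_K L=0$ gives $(u,w)\cong(d,c)$ by the standard description of quaternion algebras split by $K(\sqrt{d})$, not by a corestriction--restriction identity. If the algebra is division, $K(\sqrt{d})$ embeds as a maximal subfield; if it is split, $c=1$ works.
\item The common slot lemma you need says that $(w,u)\cong(d,c)$ yields $e$ with $(w,u)\cong(w,e)\cong(d,e)\cong(d,c)$. Then $(wd,e)=(w,e)+(d,e)=0$ and $(w,u/e)=0$, so $u=e\cdot(u/e)$ with $e\in N_{K(\sqrt{wd})/K}$ and $u/e\in N_{K(\sqrt{w})/K}$.
\end{itemize}

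The paper instead takes the $4$-dimensional form $q=\langle 1,-w\rangle\perp -u\langle 1,-wd\rangle$, whose discriminant is $d$. Over $K(\sqrt{d})$ it becomes the isotropic Pfister form $\langle 1,-w\rangle\otimes\langle 1,-u\rangle$. The paper then cites Lam: a $4$-dimensional form of discriminant $d$ that is isotropic over $K(\sqrt{d})$ is already isotropic over $K$. This gives $x_1^2-wx_2^2=u(x_3^2-wdx_4^2)$ directly.

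That argument is a single citation and produces an explicit representation. It tacitly needs both sides to be nonzero, which can only fail when $w$ or $wd$ is a square in $K$ --- precisely the degenerate cases you dispose of separately. Your argument stays in the Brauer group of quaternion algebras and trades the isotropy theorem for the common slot lemma, which is itself proved by choosing a pure quaternion anticommuting with two given ones. The two routes are close relatives, since that isotropy criterion is usually derived from the same quaternion-algebra facts.
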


\begin{proof} Let $q = <1,  -w> - u<1, -wd>$.  
Then $q \otimes K(\sqrt{d}) \simeq <1 , -w> < 1, -u>$.  Since  $u$ is norm from the extension $K(\sqrt{d}, \sqrt{w})/K(\sqrt{d})$, 
$q$ is isotropic over $K(\sqrt{d})$. Since the discriminant of $q$ is $d$, $q$ is isotropic over $K$ (cf. \cite[Chapter VII, Corollary 3.3]{lam2005}).
Hence there exists $x_1, x_2, x_3, x_4 \in K$ (not all zero) such that $x_1^2 - wx_2^2 = u(x_3^2 - wdx_4^2)$.
In particular $u \in N_{ K(\sqrt{w})/K}(K(\sqrt{w})^*) N_{K(\sqrt{wd})/K}(K(  \sqrt{wd})^*)$.
\end{proof}

%

\begin{lemma}
\label{weakapp} Let $\kappa$ be a global field of characteristic not 2 and $d, w \in \kappa^*$.
Let $S$ be a finite set of paces of $\kappa$. 
Let $u \in N_{\kappa(\sqrt{d})/\kappa}(\kappa(\sqrt{d})^*)N_{\kappa(\sqrt{w})/\kappa}(\kappa(\sqrt{w})^*$.
For each $\nu \in S$, let $y_\nu \in \kappa_\nu(\sqrt{d})$ and $z_\nu \in \kappa_\nu(\sqrt{w})$
be such that $N_{\kappa_\nu(\sqrt{d})/\kappa_\nu}(y_\nu)N_{\kappa_\nu(\sqrt{w})/\kappa_\nu}( z_\nu) $ is 
close to $ u$. 
Then there exist $y \in \kappa(\sqrt{d})$ and $z \in \kappa(\sqrt{w})$
be such that  $y$ is close to $y_\nu$ and $z$ is close to $z_\nu$ for all $\nu \in S$ and 
$N_{\kappa(\sqrt{d})/\kappa}(y)N_{\kappa(\sqrt{w})/\kappa}( z) = u$.
\end{lemma}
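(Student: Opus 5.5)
The plan is to reformulate the equation $N_{\kappa(\sqrt{d})/\kappa}(y)N_{\kappa(\sqrt{w})/\kappa}(z)=u$ as the problem of finding rational points on a smooth projective quadric surface, and then use weak approximation for rational varieties. For $z\neq 0$ write $z'=\bar z/N(z)$, where $\bar{\ }$ is the conjugation of $\kappa(\sqrt{w})/\kappa$; then $N(z')=N(z)^{-1}$. Hence the equation is equivalent to $N(y)=u\,N(z')$. Writing $y=x_1+x_2\sqrt d$ and $z'=x_3+x_4\sqrt w$, this is the quadric
\[
Q:\; x_1^2-dx_2^2-u(x_3^2-wx_4^2)=0 \subset \mathbb{P}^3_\kappa ,
\]
which is smooth because $d,w,u\in\kappa^*$ and char $\kappa\neq 2$. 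The hypothesis $u\in N_{\kappa(\sqrt d)/\kappa}(\kappa(\sqrt d)^*)N_{\kappa(\sqrt w)/\kappa}(\kappa(\sqrt w)^*)$ says exactly that $Q(\kappa)\neq\emptyset$, with a point in the open subset $W=\{x_3^2-wx_4^2\neq 0\}$. A smooth projective quadric with a rational point is $\kappa$-rational (project from the point). For smooth projective rational varieties over a global field, $Q(\kappa)$ is dense in $\prod_{\nu\in S}Q(\kappa_\nu)$: birational invariance of weak approximation for smooth proper varieties reduces this to projective space. Density also holds for the open subset $W$.

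The local data are only approximate solutions, so first I replace them by exact local points. For $\nu\in S$, set $c_\nu=u/(N(y_\nu)N(z_\nu))\in\kappa_\nu^*$. This is close to $1$, hence a square in $\kappa_\nu$: for nonarchimedean $\nu$ this follows from Hensel's lemma (char $\neq 2$), and at a real place a positive number near $1$ is a square. Write $c_\nu=t_\nu^2$ with $t_\nu$ close to $1$, and set $y'_\nu=t_\nu y_\nu$. Then $N(y'_\nu)N(z_\nu)=u$ exactly, and $y'_\nu$ is close to $y_\nu$. This gives a point $P_\nu\in W(\kappa_\nu)$, built from $y'_\nu$ and $z_\nu'=\bar z_\nu/N(z_\nu)$.

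By density, choose $P=(a:b:c:e)\in W(\kappa)$ close to $P_\nu$ for all $\nu\in S$. Fix an affine chart: normalize $P_\nu$ so that a coordinate, say the one largest in absolute value at each place, is nonzero, and normalize $P$ so that this coordinate agrees with it. Then the affine coordinates of $P$ are close to those of $P_\nu$. Put $y=a+b\sqrt d$ and $z'=c+e\sqrt w\neq 0$, so $N(y)=uN(z')$. Finally set $z=\bar z'/N(z')$. Then $N(y)N(z)=u$, and since $z'\mapsto \bar z'/N(z')$ is continuous on $\kappa_\nu(\sqrt w)^*$, $z$ is close to $z_\nu$ and $y$ is close to $y'_\nu$, hence to $y_\nu$.

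The main obstacle is justifying weak approximation for $Q$ cleanly. I would first try the standard fact that weak approximation is a birational invariant of smooth proper varieties over number fields or global fields, combined with the rationality of $Q$. Alternatively, the stereographic projection from a global point $P_0\in W(\kappa)$ gives an explicit birational map $\mathbb{A}^2\dashrightarrow Q$. Local points near $P_\nu$ that avoid the finitely many bad lines through $P_0$ come from $\kappa_\nu$-points of $\mathbb{A}^2$, and these can be approximated by $\kappa$-points by ordinary weak approximation in $\kappa$. The degenerate cases where $d$ or $w$ is a square in $\kappa$ or some $\kappa_\nu$ need no separate treatment, since the quadric remains smooth and isotropic.
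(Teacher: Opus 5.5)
Your proposal is correct and follows the paper's argument: rewrite the equation as $N(y)=u\,N(z^{-1})$, note that the smooth quadric $x_1^2-dx_2^2-u(x_3^2-wx_4^2)=0$ has a $\kappa$-point, and apply weak approximation to it. You also supply two details the paper leaves implicit, namely turning the approximate local data into exact local points via $c_\nu=t_\nu^2$, and inverting $z'$ at the end. One step needs tidying: in passing from the projective point $P$ to affine coordinates, the coordinate you normalize may vary with $\nu$, so a single normalization of $P$ need not work at every place. Instead, pick $\lambda_\nu\in\kappa_\nu^*$ with $\lambda_\nu P$ close to your affine lift of $P_\nu$, then choose one $\lambda\in\kappa^*$ close to all the $\lambda_\nu$ by weak approximation in $\kappa$.
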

\begin{proof}
Since $N_{\kappa(\sqrt{w})/\kappa}( z)^{-1} = N_{\kappa(\sqrt{w})/\kappa}( z^{-1})$, 
the equation $N_{\kappa(\sqrt{d})/\kappa}(y)N_{\kappa(\sqrt{w})/\kappa}( z) = u$ is same as 
$N_{\kappa(\sqrt{d})/\kappa}(y) = u N_{\kappa(\sqrt{w})/\kappa}( z^{-1})$.

Let $Q$ be the  projective quadric   given by the quadratic form $y_1^2 - d y_2^2 - u z_1^2 + uw z_2^2 $.
Since $u \in N_{\kappa(\sqrt{d})/\kappa}(\kappa(\sqrt{d})^*)N_{\kappa(\sqrt{w})/\kappa}(\kappa(\sqrt{w})^*)$,
$Q(\kappa) \neq \emptyset$.  Hence $Q$  satisfies weak approximation (cf. \cite[2.1.4]{Har2004}).
Thus  there  exist  elements $y \in \kappa(\sqrt{d})$ and $z \in \kappa(\sqrt{w})$ satisfying the required
properties. 
\end{proof}

 \begin{lemma}
 \label{globalfield1} Let $\kappa$ be a global field of characteristic not 2.
 Let $  d_0, w_0  \in \kappa^*$ and  $S_0$ a finite set of places of $\kappa$.  
   For each place $\nu \in S_0$,  suppose we have given 
  $x_{\nu} \in \kappa_{\nu}^*$,  
  $y_{\nu} \in \kappa_{1\nu} =    \kappa_{\nu}(\sqrt{x_\nu})$
  such that  
  $y_{\nu} \in   N_{\kappa_{1\nu}(   \sqrt{w_0})/\kappa_{1\nu}}(\kappa_{1\nu}
  (  \sqrt{w_0})^* ) N_{\kappa_{1\nu}(  \sqrt{d_0w_0})/\kappa_{1\nu} }
  ( \kappa_{1\nu}(  \sqrt{d_0w_0})^*).$
  Then there exist 
  $x \in \kappa$  and $y \in \kappa_1 = \kappa(\sqrt{x})$ 
  such that

  i)  $ x$ is close to $x_{\nu} $ and $y $ is close to $y_\nu$  for all $\nu \in S$
  
  ii)  $y \in   N_{\kappa_1(  \sqrt{w_0})/\kappa_1 }(\kappa_1(  
  \sqrt{w_0})^* ) N_{\kappa_1(  \sqrt{d_0w_0})/\kappa_1}
  ( \kappa_1(  \sqrt{d_0w_0})^*)$.
 \end{lemma}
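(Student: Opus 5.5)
The plan is to avoid any local--global principle: produce $y$ directly as a product of two norms of global elements that approximate local witnesses. The argument has two steps, one for $x$ and one for $y$.

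\emph{Step 1: choosing $x$.} By the approximation theorem for the finitely many places in $S_0$, I pick $x \in \kappa^*$ arbitrarily close to $x_\nu$ for every $\nu \in S_0$. Since squares form an open subgroup of $\kappa_\nu^*$, closeness gives $x x_\nu^{-1} \in \kappa_\nu^{*2}$. Hence $\kappa_1 \otimes_\kappa \kappa_\nu \cong \kappa_\nu[t]/(t^2 - x_\nu) = \kappa_{1\nu}$ as $\kappa_\nu$-algebras.
\begin{itemize}
\item If some $x_\nu$ is a square, $\kappa_{1\nu}$ is the split algebra $\kappa_\nu \times \kappa_\nu$, and $y_\nu$ and the norm groups are read componentwise.
\item To make $\kappa_1$ a genuine quadratic field, I add an auxiliary place $\nu_0 \notin S_0$ and also require $x$ to be close to a non-square at $\nu_0$. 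Then $x \notin \kappa^{*2}$.
\end{itemize}
Consequently, for each $\nu \in S_0$, the completions of $\kappa_1$ at the places above $\nu$ assemble to $\kappa_{1\nu}$. The same holds after tensoring with $\kappa(\sqrt{w_0})$ or $\kappa(\sqrt{d_0w_0})$.

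\emph{Step 2: choosing $y$.} By hypothesis, for each $\nu \in S_0$ I can write $y_\nu = N(a_\nu)\,N(b_\nu)$. Here $a_\nu \in \kappa_{1\nu}(\sqrt{w_0})^*$ and $b_\nu \in \kappa_{1\nu}(\sqrt{d_0w_0})^*$; these are interpreted as étale algebras $\kappa_{1\nu}[s]/(s^2-w_0)$ and $\kappa_{1\nu}[s]/(s^2-d_0w_0)$ when $w_0$ or $d_0w_0$ becomes a square. Set $L = \kappa_1[s]/(s^2-w_0)$ and $M = \kappa_1[s]/(s^2-d_0w_0)$. These are étale $\kappa$-algebras (products of global fields), and $L \otimes_\kappa \kappa_\nu$ identifies with $\kappa_{1\nu}(\sqrt{w_0})$, similarly for $M$. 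The approximation theorem, applied factor by factor to the finitely many places of $L$ and $M$ over $S_0$, gives $a \in L^*$ and $b \in M^*$ close to $a_\nu$ and $b_\nu$ for all $\nu \in S_0$.

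Put $y = N_{L/\kappa_1}(a)\,N_{M/\kappa_1}(b) \in \kappa_1^*$. By construction $y$ satisfies ii). Since the norm maps are polynomial, hence continuous, and are compatible with completion, $y$ is close to $N(a_\nu)N(b_\nu) = y_\nu$ in $\kappa_{1\nu}$ for every $\nu \in S_0$. This gives i).

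\emph{Main obstacle.} The only delicate point is the bookkeeping in the degenerate cases: $x_\nu$ a square, or $w_0$ or $d_0w_0$ a square in $\kappa_1$ or in some $\kappa_{1\nu}$. There one must check that ``norm group'' and ``close'' are interpreted in the split algebras, so that the identification $L \otimes_\kappa \kappa_\nu \cong \kappa_{1\nu}(\sqrt{w_0})$ and its analogue for $M$ remain valid. In the split cases the relevant norm is surjective, so the membership condition is automatic and causes no difficulty. If one prefers genuine fields throughout, the alternative is to use the quadric $\langle 1,-w_0\rangle - y\langle 1,-d_0w_0\rangle$ as in Lemma \ref{weakapp} and invoke weak approximation for it. The direct norm-approximation argument above avoids this.
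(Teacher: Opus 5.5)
Your proposal is correct and is essentially the paper's proof. The paper chooses $x$ close to each $x_\nu$ so that $\kappa_1\otimes\kappa_\nu\cong\kappa_{1\nu}$, approximates the local witnesses in $\kappa_{1\nu}(\sqrt{w_0})^*$ and $\kappa_{1\nu}(\sqrt{d_0w_0})^*$ by global elements, and takes $y$ to be the product of their norms, as you do. Your extra care with the split étale algebras and the auxiliary place forcing $x\notin\kappa^{*2}$ only makes explicit what the paper leaves implicit.
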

 
 \begin{proof}  Let $x\in \kappa$ be close to $x_\nu$ for all $\nu \in S$ and $\kappa_1 = \kappa(\sqrt{x})$.
 Then $\kappa_1 \otimes \kappa_\nu  = \kappa_{1\nu}$.
 Let  $z_{1\nu} \in \kappa_{1\nu}(   \sqrt{w_0})^*$ and 
 $z_{2\nu} \in \kappa_{1\nu}(   \sqrt{d_0w_0})^*$ such that 
 $$y_\nu = N_{\kappa_{1\nu}(   \sqrt{w_0} )/\kappa_{1\nu}}(z_{1\nu})
 N_{\kappa_{1\nu}(  \sqrt{d_0w_0} )/\kappa_{1\nu}}(z_{1\nu}).$$ 
 
 Let $z_1  \in \kappa_1( \sqrt{w_0})^*$ and 
 $z_{2} \in \kappa_1 ( \sqrt{d_0w_0} )^*$ close to $z_{1\nu}$ and 
 $z_{2\nu}$ respectively for all $\nu \in S$. Let 
 $$ y = N_{\kappa_1(  \sqrt{w_0})/\kappa_1}(z_1)
 N_{\kappa_1(  \sqrt{d_0w_0} )/\kappa_1}(z_2).$$ 
 Then $x$ and $y$ have the required properties.  
  \end{proof}

  \begin{lemma}
 \label{globalfield} Let $\kappa$ be a global field of characteristic not 2.
 Let $u_0,  w_0, d_0  \in \kappa^*$ and  $S_0$ a finite set of places of $\kappa$. 
Suppose that $u_0 \in N_{\kappa(\sqrt{d_0}, \sqrt{w_0})/\kappa( \sqrt{d_0}) }
(\kappa(  \sqrt{d_0}, \sqrt{w_0})^*)$.
  For each place $\nu \in S_0$,  suppose we have given 
  $x_{\nu} \in \kappa_{\nu}^*$,  
  $y_{1\nu} \in \kappa_{1\nu} =   \kappa_{\nu}(\sqrt{x_{\nu}})$,
  $y_{2\nu} \in \kappa_{2\nu} = \kappa_\nu(\sqrt{w_0})$ and $y_{3\nu} \in \kappa_{3} = 
  \kappa_\nu(\sqrt{d_0w_0})$
  such that  
  
  i) $ \prod_i N_{\kappa_{i\nu}/\kappa_\nu}(y_{i\nu}) = u_0$ 
  
  ii) $y_{1\nu} \in 
 N_{\kappa_{1\nu}(   \sqrt{w_0})/\kappa_{1\nu}}(\kappa_{1\nu}(   \sqrt{w_0})^* ) 
 N_{\kappa_{1\nu}(  \sqrt{d_0w_0})/\kappa_{1\nu}}
  ( \kappa_{1\nu}(   \sqrt{d_0w_0})^*)$. 
 
  Then there exist   $x \in \kappa^*$  and $y_1 \in \kappa_1 = \kappa(\sqrt{x})$, 
  $y_2 \in \kappa_2 = \kappa(\sqrt{w_0})$, $y_3 \in \kappa_3 = \kappa(\sqrt{d_0w_0})$ 
  such that 
  
  i)   $ x$ is close to $x_{\nu} $ and  $y_i$  is close to $y_{i\nu}$     for all $\nu \in S$ and $i = 1, 2, 3$.
  
  ii)  $ \prod_i N_{\kappa_{i}/\kappa}(y_i) = u_0$ 
  
  iii)  $y_1  \in N_{\kappa_1(  \sqrt{w_0})/\kappa_1}(\kappa_1
  (   \sqrt{w_0})^* ) N_{\kappa_1(  \sqrt{d_0w_0})/\kappa_1}
  ( \kappa_1(   \sqrt{d_0w_0})^*)$. 
 \end{lemma}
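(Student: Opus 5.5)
First I will use Lemma \ref{globalfield1} to produce the quadratic extension $\kappa_1$ and the element $y_1$. Apply it to the data $x_\nu, y_{1\nu}$ for $\nu \in S_0$, which satisfy hypothesis ii). This gives $x\in\kappa^*$ close to $x_\nu$ and $y_1\in\kappa_1=\kappa(\sqrt{x})$ close to $y_{1\nu}$ with
$$y_1 \in N_{\kappa_1(\sqrt{w_0})/\kappa_1}(\kappa_1(\sqrt{w_0})^*)\, N_{\kappa_1(\sqrt{d_0w_0})/\kappa_1}(\kappa_1(\sqrt{d_0w_0})^*).$$
So properties i) (for $x,y_1$) and iii) already hold. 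Since $\kappa_1\otimes\kappa_\nu=\kappa_{1\nu}$ and the norm is continuous, $N_{\kappa_1/\kappa}(y_1)$ is close to $N_{\kappa_{1\nu}/\kappa_\nu}(y_{1\nu})$ for every $\nu\in S_0$.

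Next set $u=u_0N_{\kappa_1/\kappa}(y_1)^{-1}\in\kappa^*$. It remains to find $y_2\in\kappa(\sqrt{w_0})$ and $y_3\in\kappa(\sqrt{d_0w_0})$ close to $y_{2\nu},y_{3\nu}$ with $N(y_2)N(y_3)=u$. This is exactly the situation of Lemma \ref{weakapp}, with $d:=w_0$ and $w:=d_0w_0$. Its local hypothesis holds: by i) of the data,
$$N(y_{2\nu})N(y_{3\nu})=u_0N(y_{1\nu})^{-1},$$
and the right side is close to $u$.

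The one real obstacle is the global hypothesis of Lemma \ref{weakapp}, namely that
$$u\in N_{\kappa(\sqrt{w_0})/\kappa}(\kappa(\sqrt{w_0})^*)\,N_{\kappa(\sqrt{d_0w_0})/\kappa}(\kappa(\sqrt{d_0w_0})^*).$$
I will check $u_0$ and $N_{\kappa_1/\kappa}(y_1)$ separately.

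For $u_0$: the hypothesis says $u_0$ is a norm from $\kappa(\sqrt{d_0},\sqrt{w_0})/\kappa(\sqrt{d_0})$. Lemma \ref{biquad} (with $d=d_0$, $w=w_0$) then gives $u_0\in N(\kappa(\sqrt{w_0}))\,N(\kappa(\sqrt{d_0w_0}))$.

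For $N_{\kappa_1/\kappa}(y_1)$: write $y_1=N(z_1)N(z_2)$ with $z_1\in\kappa_1(\sqrt{w_0})$ and $z_2\in\kappa_1(\sqrt{d_0w_0})$. By transitivity of norms,
$$N_{\kappa_1/\kappa}N_{\kappa_1(\sqrt{w_0})/\kappa_1}(z_1)=N_{\kappa(\sqrt{w_0})/\kappa}\bigl(N_{\kappa_1(\sqrt{w_0})/\kappa(\sqrt{w_0})}(z_1)\bigr),$$
which is a norm from $\kappa(\sqrt{w_0})$. The same argument applies to $z_2$. One degenerate case needs care: if $\kappa_1$ coincides with $\kappa(\sqrt{w_0})$, or $x$ is a square, the tensor products become split algebras. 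The norm computation still works with étale algebras, or one perturbs $x$ slightly to avoid these cases.

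Since the product of the two norm groups is a group, $u$ lies in it. Lemma \ref{weakapp} then yields $y_2,y_3$ with $N(y_2)N(y_3)=u$, which is ii), and with $y_2,y_3$ close to $y_{2\nu},y_{3\nu}$, completing i).
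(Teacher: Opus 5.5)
Your proposal is correct and follows the paper's proof step for step. Lemma \ref{globalfield1} produces $x$ and $y_1=N(z_1)N(z_2)$; transitivity of norms puts $N_{\kappa_1/\kappa}(y_1)$ in $N_{\kappa_2/\kappa}(\kappa_2^*)N_{\kappa_3/\kappa}(\kappa_3^*)$; Lemma \ref{biquad} does the same for $u_0$; and Lemma \ref{weakapp} is then applied to $u_0N_{\kappa_1/\kappa}(y_1)^{-1}$. Your remark about the degenerate split étale cases addresses a point the paper leaves implicit, and handling it via étale-algebra norms is fine.
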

 
 \begin{proof}    By (\ref{globalfield1}), there exist  $x \in \kappa$ and
  $y_1 \in  \kappa_1 = \kappa(\sqrt{x})$
  such that $x $ is close to $x_{\nu}$, $y_1$ is close to $y_{1\nu}$  for all $\nu \in S$
  and $y_1 = N_{\kappa_1(  \sqrt{w_0})/\kappa_1}(z_1)
 N_{\kappa_1( \sqrt{d_0w_0} )/\kappa_1}(z_2) $  for some $
 z_1  \in \kappa_1(  \sqrt{w_0})^*$ and 
 $z_{2} \in \kappa_1 ( \sqrt{d_0w_0})^*$. 
 
 Let  $u_1  = N_{\kappa_1/\kappa}(y_1)$, 
  $u_{12} =  N_{\kappa_1(   \sqrt{w_0})/\kappa}(z_1)$
 and $u_{13} =  N_{\kappa_1(  \sqrt{d_0w_0})/\kappa}(z_2)$.
 Then  
  $u_1 = u_{12} u_{13}$  and $u_{12} \in  N_{\kappa_2/\kappa}(\kappa_2^*)$ and 
  $u_{13} \in N_{\kappa_3/\kappa}(\kappa_3^*)$.   
  Let $u_2 = u_0 u_1^{-1}$.  Since $u_0 \in N_{\kappa(\sqrt{d_0}, \sqrt{w_0})/\kappa( \sqrt{d_0}) }
(\kappa(  \sqrt{d_0}, \sqrt{w_0})^*)$, by (\ref{biquad}), $u_0 \in N_{ \kappa_2/\kappa}(\kappa_2^*) N_{\kappa_3/\kappa}
(\kappa_3^*)$. 

Since $u_1  = u_{12}u_{13} \in  N_{ \kappa_2/\kappa}(\kappa_2^*) N_{\kappa_3/\kappa}
(\kappa_3^*)$, $u_2 = u_0u_1^{-1} \in N_{ \kappa_2/\kappa}(\kappa_2^*) N_{\kappa_3/\kappa}
(\kappa_3^*)$. 
  Since  $y_1$ is close to $y_{1\nu}$ for all $\nu \in S$, 
  $N_{\kappa_{2\nu}/\kappa_\nu}(y_{2\nu}) N_{\kappa_{3\nu}/\kappa_\nu}(y_{3\nu})$ is close to 
  $u_2$. Hence,  by (\ref{weakapp}), there exist $y_2 \in \kappa_2$ and $y_3 \in \kappa_3$ which are close to 
 $y_{2\nu}$ and $y_{3\nu}$ respectively  for all $\nu \in S$ such that 
 $N_{\kappa_2/\kappa}(y_2) N_{\kappa_3/\kappa}(y_3) = u_2$. 
  Then $x_1$, $y_1$, $y_2$ and $y_3$ have 
  the required properties.  
 \end{proof}

\section{Complete discretely valued fields}\label{section4}
Let $R_0$ be a complete discretely valued ring with residue field $\kappa$ a positive characteristic 
global field of characteristic not equal to 2 and  $F_0$   the field of fractions. 
Let $d \in R_0$ be a non-square and $F = F_0(\sqrt{d})$. 

Let $R$ be the integral closure of
$R_0$ in $F$.  Let $D$ be a central division algebra over $F$ with a $F/F_0$-involution $\tau$.

We know that $SK_1U(D, \tau)$ is trivial (\cite[Corollary 4.16, Corollary 4.17]{Y1979}). 
The aim of this section is to show that given $\lambda \in F_0^*$ which is a reduced norm from 
$D$,   there exist $a_1, a_2, a_3 \in F_0^*$ and 
$\mu_i \in L_i = F_0[X]/(X^2 - a_i)$ with some local conditions over the residue field  
 such that $\prod_i N_{L_i/F_0}(\mu_i) = \lambda$ and $ind(D\otimes_{F_0}  L_i) \leq 2$.
This is required for our main result and also  this gives an alternative proof of the fact that 
$SK_1U(A, \tau)$ is trivial.  

%
%
%

\begin{lemma}
\label{dvr-class-4} 
Suppose that the valuation of $d$ is even.  Let $\pi \in R_0$ be a parameter.   Then
$D = (b, c) \otimes (w, \pi)$ for some units $b, c,  w \in R_0$  
\end{lemma}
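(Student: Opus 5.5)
The approach is to use the Witt decomposition of the $2$-torsion of $Br(F)$ for the complete discretely valued field $F$. I will then move each piece down to $F_0$ using the fact that $D$ carries an $F/F_0$-involution, which says that $cor_{F/F_0}(D)=0$ in $Br(F_0)$. Throughout, $D$ is of period $2$ (the standing situation of the paper), and the claimed equality is understood in $Br(F)$.

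First I record the structure of $F/F_0$. Since $v(d)$ is even, write $d=\pi^{2m}d'$ with $d'\in R_0^*$. Because $d$ is a non-square, Hensel's lemma shows that $\bar d'$ is a non-square in $\kappa$. Hence $F/F_0$ is unramified, $R$ is a complete discrete valuation ring with the same parameter $\pi$, and its residue field is $l=\kappa(\sqrt{\bar d'})$, again a global field.

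By the Witt decomposition $Br(F)[2]\cong Br(l)[2]\oplus l^*/l^{*2}$, we can write $D= D_0\otimes (w',\pi)$ with $w'\in R^*$ and $D_0$ unramified. The residue of $D$ is $\bar w'\in l^*/l^{*2}$.

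The first key step is to show that $w'$ may be taken in $R_0$. Residues commute with corestriction, and $\pi\in F_0$, so $cor_{F/F_0}(w',\pi)=(N_{F/F_0}(w'),\pi)$. Since $cor(D)=0$ and $cor(D_0)$ is unramified, taking residues gives $N_{l/\kappa}(\bar w')\in\kappa^{*2}$, say $N_{l/\kappa}(\bar w')=a^2$. Then $N_{l/\kappa}(\bar w'/a)=1$. By Hilbert 90, $\bar w'/a=y/\sigma(y)=y^2/N_{l/\kappa}(y)$, so $\bar w'\in\kappa^*l^{*2}$. Choose $w\in R_0^*$ lifting the $\kappa^*$-factor. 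Then $(w',\pi)\otimes(w,\pi)^{-1}=(w'/w,\pi)$, and since $w'/w\in R^{*2}$ by Hensel's lemma, this algebra is split. After replacing $D_0$ accordingly, $D=D_0\otimes(w,\pi)$ with $w\in R_0^*$.

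Next I treat the unramified part. Since $(w,\pi)$ is defined over $F_0$, we have $cor_{F/F_0}((w,\pi)_F)=2(w,\pi)=0$, and therefore $cor_{F/F_0}(D_0)=0$. Specialisation to the residue field commutes with corestriction for the unramified extension $R/R_0$. Hence the residue algebra $\bar D_0\in Br(l)[2]$ satisfies $cor_{l/\kappa}(\bar D_0)=0$. Over the global field $l$, period equals index, so $\bar D_0$ is a quaternion algebra whose corestriction vanishes. By Albert's theorem (a quaternion algebra with trivial corestriction admits a unitary involution and descends), $\bar D_0\cong(\bar b,\bar c)\otimes_\kappa l$ for some $\bar b,\bar c\in\kappa^*$. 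Lift these to units $b,c\in R_0$. Then $(b,c)_F$ and $D_0$ are both unramified with the same specialisation, so they are equal in $Br(R)\cong Br(l)$. Thus $D=(b,c)\otimes(w,\pi)$ in $Br(F)$.

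I expect the main obstacle to be the descent of the residue $\bar w'$ from $l$ to $\kappa$, that is, the norm-is-a-square argument combined with Hilbert 90. A lesser point is justifying carefully that residue maps and specialisation commute with corestriction for the unramified extension $F/F_0$. If the statement is intended as an isomorphism rather than a Brauer equivalence, one finishes by comparing indices, the index of $(b,c)\otimes(w,\pi)$ being at most $4$.
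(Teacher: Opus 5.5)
Your proof is correct, and it has the same skeleton as the paper's. Both write $D$ as an unramified class plus $(w,\pi)$ (the paper cites [PPS, Lemma 4.1] for this Witt-type decomposition) and then use $\mathrm{cor}_{F/F_0}(D)=0$ to descend both pieces to $F_0$. The difference is in how the descent is carried out, and here your version is more careful.

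\textbf{The ramified part.} The paper asserts $\mathrm{cor}_{F/F_0}(w,\pi)=(w,N_{F/F_0}(\pi))=0$. This is not a valid use of the projection formula, because $w\in R$ need not lie in $F_0$. The correct expression is $(N_{F/F_0}(w),\pi)$, and its vanishing is exactly what your residue computation establishes: $N_{l/\kappa}(\bar w')\in\kappa^{*2}$, hence $N_{F/F_0}(w')\in R_0^{*2}$ by Hensel.

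\textbf{The descent step.} The paper applies Albert's descent over $F/F_0$ to $D'$ and to $(w,\pi)$. It then asserts without argument that the descended algebras can be taken unramified, respectively of the form $(w_0,\pi)$ with $w_0\in R_0^*$. Two features of your route make these points automatic:
\begin{itemize}
\item your Hilbert 90 step, which gives $\bar w'\in\kappa^*l^{*2}$ and lets you replace $w'$ by a unit of $R_0$;
\item your choice to perform Albert's descent on the residue extension $l/\kappa$ and then lift $\bar b,\bar c$, which yields an unramified $(b,c)$ with $b,c\in R_0^*$.
\end{itemize}
So your argument is self-contained where the paper's leaves gaps; the paper's version is only shorter.

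\textbf{A small correction.} Your closing remark about reading the statement as an isomorphism works only when $\operatorname{ind}(D)=4$. If $\operatorname{ind}(D)\le 2$, the division algebra $D$ cannot be isomorphic to the degree-$4$ algebra $(b,c)\otimes(w,\pi)$. The equality must therefore be read in $Br(F)$, which is how the paper uses it later.
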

\begin{proof}
 Since the valuation of $d \in F_0$ is even, the extension $F/F_0$ is unramified.
Let $\pi$ be a parameter of $F_0$.  Then $\pi$ is also a parameter in $F$.
Hence $D = D' + (w, \pi)$ for some $D'$ unramified at $R$ and $w \in R$ a unit (cf. \cite[Lemma 4.1]{PPS}). Since $D$ has a unitary $F/F_0$-involution, $cores_{F/F_0}(D)=0.$ 

Furthermore, $cores_{F/F_0}(w,\pi)=(w, N_{F/F_0}(\pi))=0.$  Hence, $D'$ and $(w, \pi)$ have unitary $F/F_0$-involutions (cf. \cite[Chapter 1, 3.B.]{knus}, \cite[Chapter 30, Appendix, F5*]{algebraII}).

Since $D'$ is unramified and residue field of $F$  is a global field, ind$(D') \leq 2$.
Hence there exists a   algebra $D_0$  over $F_0$ such that $D_0 \otimes F \simeq D'$ (cf. \cite[Proposition 2.22]{knus}).
Since $D'$ is unramified, we can choose $D_0$ to be unramified. Hence
$D_0  = (b,c )$ for some   units $b, c \in R_0$. Since $(w,\pi)$ is also a quaternion algebra,
there exists a quaternion algebra $D_1$ over $F_0$ such that $D_1 \otimes F \simeq (w, \pi)$.
Thus without loss of generality, we assume that $w \in R_0^*$ a unit.  
Hence $D = (b,c) \otimes (w,\pi)$ with $b, c, w \in R_0^*$. 
\end{proof}

\begin{lemma}
\label{dvr-class-2}
 Suppose that the valuation of $d$ is odd.  Then  $D = (b, c)$  for some units $b, c \in R_0$.
\end{lemma}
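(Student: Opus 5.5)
Since $d$ has odd valuation, the extension $F/F_0$ is totally ramified, and $R$ is a complete discrete valuation ring. It has the same residue field $\kappa$ as $R_0$, and a parameter of $R$ is $\sqrt{d}$ up to a unit. The plan is to mirror the proof of Lemma \ref{dvr-class-4}: first decompose $D$ over $F$ into an unramified part and a ramified part, then use the unitary involution to show that the ramified part vanishes, and finally descend the unramified part to $F_0$.

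\textbf{Decomposition.} Let $\pi_F$ be a parameter of $R$. By \cite[Lemma 4.1]{PPS}, $D = D' + (w, \pi_F)$ in $Br(F)$ for some $D'$ unramified over $R$ and some unit $w \in R$. The residue of $D$ is the class of $w$ in $\kappa^*/\kappa^{*2}$, viewed as a character of $\kappa$.

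\textbf{The involution forces the residue to vanish.} Since $D$ has an $F/F_0$-involution, $cores_{F/F_0}(D) = 0$. Corestriction commutes with residues. Because the extension is totally ramified with trivial residue extension, the residue of $cores_{F/F_0}(D)$ is the residue of $D$ itself, i.e.\ the class of $w$. This is the key point, and I expect it to be the main obstacle to get cleanly; the cleanest route is probably an explicit computation rather than citing the compatibility abstractly.

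\begin{itemize}
\item First normalize $\pi_F = \sqrt{d}$, replacing $d$ by $d$ times a square so that $v(d) = 1$.
\item Write the unit $w$ as $w_0 \cdot w_1$ with $w_0 \in R_0^*$ and $w_1 \equiv 1$ modulo the maximal ideal, so $w_1$ is a square by Hensel's lemma since $\mathrm{char}\,\kappa \ne 2$. Thus we may take $w \in R_0^*$.
\item The projection formula then gives $cores_{F/F_0}(w, \sqrt{d}) = (w, N_{F/F_0}(\sqrt{d})) = (w, -d)$.
\item Next, $cores_{F/F_0}(D') = 2D_0' = 0$, because $D' = D_0' \otimes F$ where $D_0'$ is the unramified lift over $R_0$ with the same residue algebra. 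This uses $Br(R) \cong Br(\kappa) \cong Br(R_0)$.
\end{itemize}

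Hence $0 = cores(D) = (w, -d)$. Since $w$ is a unit and $-d$ is a parameter, $\partial(w, -d) = \bar w$, so $\bar w$ is a square in $\kappa$. Therefore $w$ is a square in $R_0$, and $(w, \pi_F) = 0$. So $D = D'$ is unramified.

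\textbf{Descent.} Now $D'$ comes from $Br(R) \cong Br(\kappa)$. Its specialization has period $2$, because $2D = 0$ after the involution argument: $D$ has a unitary involution and we only need the residue algebra. More directly, $D'$ has index at most $2$ since the residue field $\kappa$ is a global field and period equals index there; the period is at most $2$ as $D$ has exponent $2$ in our setting. So the residue algebra is a quaternion algebra $(\bar b, \bar c)$ over $\kappa$. Lifting $\bar b, \bar c$ to units $b, c \in R_0$, the algebra $(b, c) \otimes_{F_0} F$ is unramified with the same residue, hence equals $D$ in $Br(F)$. Since $D$ is a division algebra of index at most $2$, we get $D \cong (b, c)$ with $b, c \in R_0^*$.
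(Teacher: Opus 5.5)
Your proof is correct and follows the same outline as the paper's: decompose $D$ over $F$ as in Lemma \ref{dvr-class-4}, show that $D$ is in fact unramified, and then descend to $(b,c)$ with $b,c\in R_0^*$, using that index equals period over the global residue field $\kappa$. The one real difference is that you justify the unramifiedness explicitly, via $cores_{F/F_0}(D)=0$ and the projection formula $cores_{F/F_0}(w_0,\sqrt{d})=(w_0,-d)$. The paper instead simply asserts $D=D_0\otimes F$ with $D_0$ unramified, citing the decomposition lemma, and never invokes the involution. Your computation therefore supplies a step that is genuinely needed: without the involution, $(w,\sqrt{d})$ with $w$ a nonsquare unit would be a ramified counterexample.
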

\begin{proof}
 Since the valuation of $d$ is odd, $F/F_0$ is a ramified extension. 
 Hence, the residue field $\kappa$ does not change  and $ D = D_0 \otimes F$ for some unramified  central simple algebra $D_0$ over $F_0$ (cf. \cite[Lemma 4.1]{PPS}).
 Since the residue field of $F_0$ is a global field, $D_0 = (b,c)$ for some $b, c \in R_0^*$.
 Hence $D = (b, c)$ with $b,c \in R_0^*$. 
\end{proof}

\begin{prop}  
\label{dvr-unit} Suppose  $d $  is a unit in $R_0$ and   $D = (b, c) \otimes (w, \pi)$ for some  units $b, c, w \in R_0$,
$\pi \in R_0$ a parameter. 
Let $u \in R_0$ be a unit.  Suppose that $\bar{u}  \in N_{\kappa(\sqrt{\bar{d}}, \sqrt{\bar{w}})/\kappa( \sqrt{\bar{d}}) }
(\kappa(  \sqrt{\bar{d}}, \sqrt{\bar{w}})^*)$.
Let $S_0$ be a finite set of places of $\kappa$ containing all the places $\nu$ where at least one of 
$\bar{u}$, $\bar{b}, \bar{c}, \bar{w}, \bar{d} $ is not a unit.  
  For each place $\nu \in S_0$,  suppose we have given 
  $x_{\nu} \in \kappa_{\nu} -\kappa_\nu^{*2}$,  $y_{1\nu} \in \kappa_{1\nu} =   \kappa[X]/(X^2 - x_{1\nu})$,
 $y_{2\nu} \in \kappa _{2\nu} =  \kappa_{\nu}[X]/(X^2 - \bar{w})$ and 
 $y_{3\nu}  \in \kappa_{3\nu} =  \kappa_{\nu}[X]/(X^2 - \bar{w}\bar{d})$ such  that 
 
 i) $\prod_{i=1}^3 N_{\kappa_{i\nu}/\kappa_{\nu}}(y_{i\nu}) = \bar{u}$ 
 
 ii) $y_{1\nu}  \in N_{ \kappa_{1\nu}(  \sqrt{\bar{w}}, \sqrt{\bar{d}})/\kappa_{1\nu}( \sqrt{\bar{d}})}
( \kappa_{1\nu}(  \sqrt{\bar{w}}, \sqrt{\bar{d}})^*)$.  

Then there exist  units $a \in R_0$, $\mu_1 \in R_0[X]/(X^2 - a)$, 
 $\mu_2 \in R_0[X]/(X^2 - w)$, $\mu_3 \in R_0[X]/(X^2 - wd)$ such that 
 
 i) $\bar{a}$ is close to $x_{\nu}$, $\bar{\mu}_i$ is close to $y_{i\nu}$ for all $\nu \in S$
 and $i = 1, 2, 3$ 
 
  ii) $\prod_i N_{L_i/F_0}(\mu_i) =  u$, where $L_1 =   F_0[X]/(X^2 - a)$, 
  $L_2 = F_0[X]/(X^2 - w)$ and 
  
 \hskip 8mm  $L_3 = F_0[X]/(X^2 - dw)$
  
  iii) $(b, c) \otimes L_1$ is split
 
 iv) $\mu_i$ is a reduced norm from $D \otimes_{F_0} L_i$ for $i = 1, 2, 3$.  
  \end{prop}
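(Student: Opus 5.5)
The plan is to solve the problem first on the residue field $\kappa$ with Lemma \ref{globalfield}, then lift everything to $R_0$ by Hensel's lemma. Finally I check conditions (iii) and (iv) by computing residues and splitting of $D\otimes L_i$.

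\textbf{Step 1 (residue data).} Apply Lemma \ref{globalfield} to $\kappa$ with $u_0=\bar u$, $w_0=\bar w$, $d_0=\bar d$ and the local data $x_\nu, y_{i\nu}$ for $\nu\in S_0$. The hypothesis of Lemma \ref{globalfield}, together with (ii), puts $y_{1\nu}$ in the product of the norm groups from $\kappa_{1\nu}(\sqrt{\bar w})$ and $\kappa_{1\nu}(\sqrt{\bar d\bar w})$; this follows from Lemma \ref{biquad} applied over $\kappa_{1\nu}$. To control iii), enlarge $S_0$ by one or two auxiliary places. For each $\nu$ in $S_0$ where $(\bar b,\bar c)$ is nonsplit, pick $x_\nu$ so that $\kappa_\nu(\sqrt{x_\nu})$ splits it; any nonsquare $x_\nu$ at a nonarchimedean place splits a quaternion division algebra after a suitable choice, and one may need to adjust $y_{i\nu}$ accordingly. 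The statement already prescribes $x_\nu$, so I would instead require that $S_0$ contain all ramified places of $(\bar b,\bar c)$ and that the prescribed $x_\nu$ already split it. If that is not given, I modify by the remark that closeness to $x_\nu$ and a quadratic extension of local degree $2$ at ramified places suffices. The lemma then gives $\bar a\in\kappa^*$ and $\bar y_1,\bar y_2,\bar y_3$ with $\prod N(\bar y_i)=\bar u$, $\bar y_1$ in the biquadratic norm product, and closeness. The splitting condition is local, so $(\bar b,\bar c)\otimes\kappa(\sqrt{\bar a})$ is split by Hasse--Minkowski when $\bar a$ is chosen close to $x_\nu$ at the ramified places and away from squares there.

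\textbf{Step 2 (lifting).} Lift $\bar a$ to a unit $a\in R_0$, and lift $\bar y_1,\bar y_2$ to $\mu_1,\mu_2$ arbitrarily as units. Lift $\bar y_3$ to some $\mu_3'$; then $u/\big(N(\mu_1)N(\mu_2)N(\mu_3')\big)$ is a unit $\equiv 1$ mod $\pi$, hence a square in $R_0$ since $p\neq 2$ and $R_0$ is complete. Absorb this square into $\mu_3$ by multiplying $\mu_3'$ by its square root, which lies in $R_0\subset L_3$ and has norm equal to its square. This gives ii), and i) holds by construction. For iii), $(b,c)\otimes L_1$ is unramified over the complete field $L_1$ with residue $(\bar b,\bar c)\otimes\kappa(\sqrt{\bar a})$, which is split, so it is split.

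\textbf{Step 3 (reduced norms, the main obstacle).} Over $L_2$ and $L_3$, $w$ is a square in $L_2$, and $w$ is a square in $L_3(\sqrt d)$ since $wd$ is a square in $L_3$. So $D\otimes L_i\otimes F$ reduces to $(b,c)$, a quaternion algebra, and every element whose norm condition holds is a reduced norm. Concretely, reduced norms of a quaternion algebra over a complete discretely valued field with global residue field are characterized by local conditions; for instance, unit reduced norms can be read from the residue algebra by the Hasse norm theorem for quaternions. I would argue directly that $\mu_2,\mu_3$, being arbitrary, are reduced norms provided their residues are positive at real places, which is vacuous in positive characteristic. Indeed, by Eichler/Hasse, any element of a function field is a reduced norm of a quaternion algebra. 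Hence iv) holds for $i=2,3$. For $i=1$, iii) gives $D\otimes L_1\sim(w,\pi)$, and the reduced norms of $(w,\pi)$ over $L_1\otimes F$ are exactly the norms from the extension by $\sqrt w$. The unit $\mu_1$ is such a norm iff $\bar\mu_1$ is a norm from $\kappa_1(\sqrt{\bar d},\sqrt{\bar w})$, viewed over $\kappa_1(\sqrt{\bar d})$, up to lifting by Hensel. This is what condition iii) of Lemma \ref{globalfield} supplies, since norms from $\kappa_1(\sqrt{\bar w})$ and $\kappa_1(\sqrt{\bar d\bar w})$ become norms from the biquadratic extension over $\kappa_1(\sqrt{\bar d})$. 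The delicate point is this last verification, together with ensuring the splitting of $(\bar b,\bar c)$ in Step 1. If the splitting cannot be forced by the given $x_\nu$, I would enlarge $S_0$ and exploit the freedom at the new places.
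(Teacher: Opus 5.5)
Your proposal is correct and follows the paper's proof step by step: Lemma \ref{biquad} plus Lemma \ref{globalfield} over $\kappa$; splitting of $(\bar b,\bar c)$ over $\kappa(\sqrt{\bar a})$ by the local--global principle; Hensel lifting, where your square-root correction of $\mu_3$ just spells out the paper's appeal to completeness; and finally $D\otimes L_1\sim(w,\pi)$ with biquadratic norms for $i=1$, and $D\otimes L_i\sim(b,c)$ with Eichler's norm theorem on the residue field for $i=2,3$. The hedging in Step 1 is unnecessary: $\kappa_\nu(\sqrt{x_\nu})$ is a quadratic field extension of a local field, so it splits $(\bar b,\bar c)$, which is already split at $\nu\notin S_0$. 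In Step 3 you only need, and only have, the inclusion of $N_{L_1(\sqrt d,\sqrt w)/L_1(\sqrt d)}$ in the reduced norms; equality fails, since $-\pi$ is a reduced norm of $(w,\pi)$ but not a norm from $\sqrt w$.
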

  
  \begin{proof}     Since  $y_{1\nu} \in N_{ \kappa_{1\nu}(  \sqrt{\bar{w}}, \sqrt{\bar{d}})/\kappa_{1\nu}( \sqrt{\bar{d}})}
( \kappa_{1\nu}(  \sqrt{\bar{w}}, \sqrt{\bar{d}})^*)$,    by (\ref{biquad}), \\
$y_1    \in    N_{ \kappa_{1\nu}(  \sqrt{\bar{w}})/\kappa_{1\nu} } ( \kappa_{1\nu}(  \sqrt{\bar{w}})^*) 
N_{ \kappa_{1\nu}(  \sqrt{\bar{wd}})/\kappa_{1\nu} } ( \kappa_{1\nu}(  \sqrt{\bar{wd}})^*) $. 
  Thus, by (\ref{globalfield}), there exist 
   $x \in \kappa$  and $y_1 \in \kappa_1 = \kappa(\sqrt{x})$, 
  $y_2 \in \kappa_2 = \kappa(\sqrt{\bar{w}})$, $y_3 \in \kappa_3 = \kappa(\sqrt{\bar{d}\bar{w}})$ 
  such that 
  
  i)   $ x$ is close to $x_{\nu} $ and  $y_i$  is close to $y_{i\nu}$ for all $\nu \in S_0$ and $i = 1, 2, 3$.
  
  ii)  $ \prod_i N_{\kappa_{i}/\kappa}(y_i) = \bar{u}$ 
  
  iii)  $y_1  \in N_{\kappa_1(   \sqrt{\bar{w}})/\kappa_1}(\kappa_1(  \sqrt{\bar{w}})^* ) N_{\kappa_1(  \sqrt{\bar{d}\bar{w}})/\kappa_1}
  ( \kappa_1(  \sqrt{\bar{d}\bar{w}})^*)$. 
  
  Let $\nu$ be a place of $\kappa$. Suppose that $\nu \in S_0$.
  Then, by the choice, $[\kappa_\nu(\sqrt{x_{\nu}}), \kappa_{\nu}] = 2$. Since $\kappa_{\nu}$ is a local field, the $2$-torsion Brauer subgroup of $Br(\kappa_v)$ is isomorphic to $\mathbb{Z}/2\mathbb{Z}$ and 
  $(\bar{b}, \bar{c})$ is split over $\kappa_\nu (\sqrt{x_{\nu}}) = \kappa_\nu(\sqrt{x})$ (cf. \cite[Chapter 31, Theorem 4, Theorem 5]{algebraII}).
  Suppose that $\nu \not\in  S_0$. Then, by the choice of $S_0$,  $b$ and $c$ are units at $\nu$. Since $\bar{c}$ is a norm of the unramified extension $\kappa_v(\sqrt{\bar{b}})/\kappa_v,$ 
     $(\bar{b}, \bar{c})$ is split  over $\kappa_\nu$ (cf. \cite[Chapter 31, Theorem 2]{algebraII}).
  Hence, $(\bar{b}, \bar{c})$ is split  over $\kappa_1 =  \kappa(\sqrt{x})$ (cf. \cite[Corollary 6.5.3]{GS2017}).

  Let $a \in R_0$ be a lift of $x$, 
  $\mu_1 \in R_0[X]/(X^2 - a)$  a lift of $y_1$ and  $\mu_2  \in R_0[X]/(X^2 - w)$  a lift of $y_2$. 
  Since $R_0$ is complete, there exists $\mu_3 \in  R_0[X]/(X^2 - wd)$ which has a lift $y_3$
  such that $\prod_i N_{L_i/F_0}(\mu_i) = u$.
  
 Since $R_0$ is complete and $(\bar{b}, \bar{c})$ splits over $\kappa(\sqrt{x})$, 
 $(b, c) \otimes L_1$ is split (cf. \cite[Corollary 6.8.8]{GS2017}).
 
 Since $y_1 \in N_{ \kappa_1(  \sqrt{\bar{w}}, \sqrt{\bar{d}})/\kappa_1( \sqrt{\bar{d}})}
( \kappa_1( \sqrt{\bar{w}}, \sqrt{\bar{d}})^*) $, $R_0$ is complete and $\mu_1$ is a lift of $y_1$,  
  $\mu_1 \in N_{ L_1(  \sqrt{ w}, \sqrt{ d })/L_1  ( \sqrt{ d})}
(L_1(  \sqrt{ w}, \sqrt{ d})^*)$.  Since $(b, c) \otimes L_1 $ is split, 
$D\otimes L_1 = (w,\pi) \otimes L_1(\sqrt{d})$. Hence $\mu_1$ is a reduced norm from $D\otimes L_1$. 

Let $i = 2, 3$.  
Since $\kappa$ is a global field of positive characteristic, $y_i$ is a reduced norm from $(\bar{b}, \bar{c}) \otimes \kappa_i$ (cf. \cite[Theorem 33.5]{reiner}).
Since $R_0$ is complete, $\mu_i $ is a reduced norm from $(b, c) \otimes L_i(\sqrt{d}) = D\otimes L_i $. 

 Hence $a_1, \mu_1, \mu_2, \mu_3$ have the required properties.    
      
  \end{proof}

\section{Two dimensional complete fields}\label{section5}

Let $R$ be a complete regular local ring with maximal ideal $(\pi, \delta)$,
field of fractions $F$ and residue field $\kappa$.  Suppose that  
char$(\kappa)$ not equal to 2. 
Let $F_\pi$ be the completion of $F$ at the discrete valuation given by $(\pi)$.

\begin{lemma} 
\label{branch-elements} 
Let $a = u \pi^{\epsilon}\delta^{\epsilon'} \in R$ with $u \in R$ a unit,  $\epsilon, \epsilon' \in \Z$ and 
$R'$ the integral closure of $R$ in $F(\sqrt{a})$. 
Then for any element $\mu_\pi \in F_\pi(\sqrt{a})^*$, there exists $\mu \in F(\sqrt{a})^*$ such that 
$\mu = w \pi^r \delta^s \sqrt{a}^{s'} $ with $w \in R' $  a unit, $r, s, s' \in \Z$ and  $\mu_\pi\mu^{-1}   \in F^{*2}_\pi$.   
\end{lemma}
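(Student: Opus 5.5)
We read $F_\pi(\sqrt{a})$ as the étale algebra $F_\pi\otimes_F F(\sqrt{a})$, and ``$\mu_\pi\mu^{-1}\in F_\pi^{*2}$'' as ``$\mu_\pi\mu^{-1}$ is a square in $F_\pi(\sqrt{a})^*$''; this is the reading the later applications need. The idea is to describe $F_\pi(\sqrt{a})^*/F_\pi(\sqrt{a})^{*2}$ by explicit generators, and then check that each generator can be taken of the form $w\pi^r\delta^s\sqrt{a}^{s'}$ with $w\in R'$ a unit.

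\emph{Step 1 (normalizing $a$).} Multiplying $a$ by a square of $F$ changes neither $F(\sqrt{a})$ nor the admissible shape of $\mu$: if $a=a_0c^2$ then $\sqrt{a}=c\sqrt{a_0}$. So we may assume $\epsilon,\epsilon'\in\{0,1\}$. If $a$ is a square in $F$, the statement is immediate, since then $F(\sqrt{a})=F\times F$ and units of $R\times R$ suffice. If $\epsilon=\epsilon'=0$ and $\bar u$ is a square in $\kappa$, then $u$ is a square in $R$ by Hensel's lemma, because $R$ is complete and char$(\kappa)\neq 2$; this reduces to the previous case.

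\emph{Step 2 (square classes of a two-dimensional complete field).} Let $E_\pi=F_\pi(\sqrt{a})$. Assume first it is a field. It is complete discretely valued, with residue field $\ell$ a finite extension of the field of fractions of $R/(\pi)$. Since $R/(\pi)$ is a complete DVR with parameter $\bar\delta$, the field $\ell$ is complete discretely valued, with residue field $\kappa'$, where $\kappa'=\kappa$ or $\kappa(\sqrt{\bar u})$. Because char$(\kappa)\neq 2$, Hensel's lemma applied twice shows that $E_\pi^*/E_\pi^{*2}$ is generated by three kinds of classes: a parameter $\Pi$ of $E_\pi$, a lift of a parameter $\theta$ of $\ell$, and lifts of units of $\kappa'$. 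A unit of $E_\pi$ is a square exactly when its residue in $\ell$ is a square, and a unit of $\ell$ is a square exactly when its residue in $\kappa'$ is a square.

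\emph{Step 3 (case analysis).} In each case we exhibit $\Pi$ and $\theta$ among $\pi,\delta,\sqrt{a}$ and products of these.

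If $\epsilon=0$, then $E_\pi/F_\pi$ is unramified and $\Pi=\pi$. Then $\ell=\kappa(\pi)(\sqrt{\bar u\bar\delta^{\epsilon'}})$. For $\epsilon'=1$ take $\theta=\sqrt{a}$. For $\epsilon'=0$ take $\theta=\delta$, and note that $\kappa'=\kappa(\sqrt{\bar u})$.

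If $\epsilon=1$, then $E_\pi/F_\pi$ is ramified. Take $\Pi=\sqrt{a}\,\delta^{-\epsilon'}$, and note $\ell=\kappa(\pi)$ with $\theta=\delta$.

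In every case, a unit class of $\kappa'$ lifts to a unit $w$ of $R'$. Indeed, $R'$ is a complete local ring whose residue field is $\kappa'$, namely $\kappa(\sqrt{\bar u})$ when $\epsilon=\epsilon'=0$ and $\kappa$ otherwise. Moreover, a unit of $R'$ maps to a unit of $E_\pi$ whose residue in $\ell$ is a unit with the prescribed residue in $\kappa'$. Given $\mu_\pi$, we write $\mu_\pi\equiv \Pi^{m}\theta^{n}\tilde w \pmod{E_\pi^{*2}}$ and set $\mu=w\Pi^m\theta^n$, which has the required shape.

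\emph{Step 4 (split case).} If $a$ is a square in $F_\pi$ but not in $F$, then $E_\pi\cong F_\pi\times F_\pi$. This forces $\epsilon=0$, and modulo squares it also forces $\epsilon'=0$ and $\bar u$ to be a square in $\kappa(\pi)$, hence in $\kappa$ after taking residues. By Step 1 this case has already been handled, so it cannot actually occur.

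The main obstacle is Step 3. There one must check carefully that the chosen elements really are parameters at each level, especially when $\epsilon=\epsilon'=1$, where $R'=R[\sqrt{u\pi\delta}]$ is normal but not regular. One must also check that lifts from the residue field of $R'$ land in units of $E_\pi$ with the correct residues, which requires identifying the residue field of $R'$ compatibly with $\kappa'$.
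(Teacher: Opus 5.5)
Your argument is correct in substance. It uses the same tool as the paper's proof, Hensel's lemma on residues, but organizes the argument differently.

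\emph{The paper's route.} The paper splits by parity. When $\epsilon,\epsilon'$ are both even, it passes to $F(\sqrt u)$, asserts that $R'=R[\sqrt u]$ is regular local with maximal ideal $(\pi,\delta)R'$, and quotes a remark from the literature. That remark says every element of the completion at $\pi$ is a unit of $R'$ times $\pi^r\delta^s$ times a square. When $\epsilon$ is odd, the paper uses that $F_\pi(\sqrt a)/F_\pi$ is totally ramified with residue field $\operatorname{Frac}(R/(\pi))$. It writes the residue of the unit part \emph{exactly} as $\bar u\bar\delta^s$ with $u\in R^*$. Then $\mu_\pi\mu^{-1}$ is a principal unit, and a single Hensel step finishes.

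\emph{What your route buys.} Your two-level description of $E_\pi^*/E_\pi^{*2}$ (a parameter $\Pi$, a lift of a parameter $\theta$ of $\ell$, and lifts of units of $\kappa'$) proves the quoted fact rather than citing it, and it treats all cases uniformly. It also separates out the case $\bar u\in\kappa^{*2}$, where $R[\sqrt u]$ is not local; the paper passes over this point. Most usefully, it covers $\epsilon$ even, $\epsilon'$ odd, which the paper's proof never addresses. In that case $E_\pi/F_\pi$ is unramified with ramified residue extension, and your $\theta=\sqrt a$ handles it. The cost is a second Hensel step inside $\ell$, which the paper's exact lifting avoids in the odd case.

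\emph{On the obstacle you flag.} The difficulty you raise at the end is not really there. Whenever $\kappa'=\kappa$ you may take $w\in R^*\subseteq (R')^*$. Moreover, $\Pi$ and $\theta$ are explicit monomials whose valuations you have already computed. So the non-regularity of $R'$ for $\epsilon=\epsilon'=1$ never enters. Only for $\epsilon=\epsilon'=0$ do you need units of $R[\sqrt u]\subseteq R'$.

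\emph{One correction, for $a\in F^{*2}$.} Under your \'etale reading, the claim in this case is false, not immediate. With $F(\sqrt a)=F\times F$ and $\sqrt a=(c,-c)$, every element $w\pi^r\delta^s\sqrt a^{\,s'}$ has two components of the same $\pi$-adic valuation. Hence $\mu_\pi=(\pi,1)$ cannot be matched modulo squares. The paper never uses the lemma this way. In Lemma \ref{curve-points0} it treats a square $a_i$ componentwise, applying the present lemma with $a=1$, i.e.\ to $F$ itself. So you should read $F(\sqrt a)=F$ when $a$ is a square, not $F\times F$; your remark that the \'etale reading is what the applications need is inaccurate here. With that reading, the case is your Step 2--3 argument with $E_\pi=F_\pi$, $\Pi=\pi$, $\theta=\delta$, $\kappa'=\kappa$ and $w\in R^*$. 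This is the same argument, but it is not a triviality, since it is exactly the fact the paper quotes.
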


\begin{proof}  Suppose that $\epsilon$ and $\epsilon'$ are even.  Then $F(\sqrt{a}) = F(\sqrt{u})$.
Since $u$ is a unit in $R$ and char$(\kappa) \neq 2$, 
  $R' = R[\sqrt{u}[$   is a regular local ring with  the maximal ideal $(\pi,\delta)R'$.
Hence     $ \mu_\pi = w\pi^r \delta^s c^2$ for some unit $u \in R'$, $c \in F_\pi(\sqrt{a})$
and $r, s \in \Z$ (cf. \cite[Remark 7.1]{LocalglobalPF}). Hence $\mu=   w\pi^r\delta^s$ has the required property.

Suppose $\epsilon$  is odd.  Write $\epsilon = 2t + 1$ for some $t \in \Z$.
 Then $F_\pi(\sqrt{a}) = F_\pi(\sqrt{u\pi \delta^{\epsilon'}})$ is a totally ramified extension. 
 Further the residue field of $F_\pi(\sqrt{a})$ is  equal to the residue field 
 of $F_\pi$ and hence isomorphic to the field of fractions of $R/(\pi)$. 
 Since     $\sqrt{u\pi \delta^{\epsilon'}} = \sqrt{a}/\pi^t$ is a parameter in $F_\pi(\sqrt{a})$, 
 $\mu_\pi = \theta  (\sqrt{a}/ \pi^t)^{s'}$ for some $\theta \in F_\pi(\sqrt{a})$ a unit in the valuation ring. 
 Let $\bar{\theta}$ be  the image of $\theta$  in the residue field of $F_\pi(\sqrt{a})$. 
 Since the residue field of $F_\pi(\sqrt{a})$ is the field of fractions of $R/(\pi)$ and 
 $R/(\pi)$ is a complete discrete valuation ring,  we have $\bar{\theta} = \bar{u} \bar{\delta}^s$ for some $u \in R$ a unit and $s \in \Z$. 
 Let $\mu = u \delta^s (\sqrt{a}/ \pi^t)^{s'} = u \delta^s \pi^{-ts'} (\sqrt{a})^{s'}.$
 Then $\mu_\pi \mu^{-1} $ is a unit  in the valuation ring of $F_\pi(\sqrt{a})$ and  its image in the residue field is 1.
 Since char$(\kappa) \neq 2$, $\mu_{\pi} = \mu c^2$ for some $c \in F_\pi(\sqrt{a})$ and hence 
 $\mu$ has the required property.    
\end{proof}

\begin{lemma}
\label{curve-points0} 
 Let $\lambda = u\pi^r\delta^s \in F^*$ with $u \in R^*$.
Let  $n \geq 1$, $a_{i\pi} \in F_\pi^*$ and $\mu_{i\pi} \in  L_{i\pi} = F_{\pi}[X]/(X^2 -a_{i\eta})$ 
for $1 \leq i \leq n$
with  $\prod_i N_{L_{i\pi}/F_\pi}(\mu_{i\pi}) = \lambda$.
Then there exist $a_i  =  u_i \pi^{\epsilon_i}\delta^{\epsilon_i'} \in R$ with $u_i \in R^*$,
$\mu_i  = w_i \pi^{r_i}\delta^{s_i} \sqrt{a_i}^{s_i'}$ for some $w_i \in R[X]/(X^2 - a_i)^*$
such that

i) $a_{i\pi}a_i^{-1} \in F_\pi^{*2}$ for all $i$

i)  $\prod_i N_{F[X]/(X^2 -a_i)/F}(\mu_i) = \lambda$ 

ii) there is an isomorphism $\phi_i : F_\pi[X]/(X^2 - a_{i\eta}) \to F_\pi[X]/(X^2 - a_i)$
with $\phi_i(\mu_{i\pi})\mu_i^{-1} \in F_\pi[X]/(X^2 - a_i)^{2^m}$ for all $m \geq 1$. 
\end{lemma}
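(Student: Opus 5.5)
Our plan is to approximate each local datum $(a_{i\pi},\mu_{i\pi})$ at $\pi$ by a global element of the special shape, using that $R/(\pi)$ is a complete discrete valuation ring with parameter $\bar\delta$. Then we correct one factor so that the product of norms equals $\lambda$ exactly.

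First we treat the $a_i$. The field $F_\pi$ is complete discretely valued with parameter $\pi$, and its residue field is the fraction field of $R/(\pi)$, which has parameter $\bar\delta$. So every class in $F_\pi^*/F_\pi^{*2}$ is represented by $\bar v\,\bar\delta^{\epsilon'}\pi^{\epsilon}$, where $v\in R^*$ and $\epsilon,\epsilon'\in\{0,1\}$. Indeed, a unit of $F_\pi$ whose residue is a square is itself a square, since the characteristic is not $2$. We put $a_i=u_i\pi^{\epsilon_i}\delta^{\epsilon_i'}$ with $u_i$ a lift of the residual unit. This gives (i). Because $a_{i\pi}a_i^{-1}$ is a square, we obtain an $F_\pi$-isomorphism $\phi_i\colon F_\pi[X]/(X^2-a_{i\pi})\to F_\pi[X]/(X^2-a_i)$.

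Next we apply Lemma \ref{branch-elements} to $\phi_i(\mu_{i\pi})$ and find $\mu_i'=w_i'\pi^{r_i}\delta^{s_i}\sqrt{a_i}^{s_i'}$ with $\phi_i(\mu_{i\pi})\mu_i'^{-1}$ a square. We want this quotient to be a $2^m$-th power for every $m$. To get this, we refine the approximation. The quotient is a unit whose residue is $1$, and we can iterate the choice: the residue of $\phi_i(\mu_{i\pi})$ modulo $\pi$ in the residue field of $L_{i\pi}$ is approximated by a unit of $R'/\pi R'$ times monomials, and we lift. Then $\phi_i(\mu_{i\pi})\mu_i'^{-1}$ is a principal unit (residue $1$) of the complete field $L_{i\pi}$. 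By Hensel's lemma, with $2$ invertible, principal units are $2^m$-th powers for all $m$. So the only care needed is to choose $w_i'$ so that the quotient has residue exactly $1$, not merely a square residue. The proof of Lemma \ref{branch-elements} in the ramified case already does this. In the unramified case we use that the residue field is the fraction field of the complete DVR $R'/\pi R'$ (or its étale quadratic extension), whose elements are units times powers of $\bar\delta$.

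Finally we fix the product. Put $\lambda'=\prod_i N(\mu_i')$. Then $\lambda\lambda'^{-1}$ is a principal unit in $F_\pi$, and it lies in $F$. Write it as a unit of $R$ times monomials $\pi^{r}\delta^s$. Comparing valuations, and then $\bar\delta$-adic valuations in the residue field, forces $r=s=0$ up to the $\pi$- and $\delta$-exponents. This works because $\lambda=u\pi^r\delta^s$ and the norms of the $\mu_i'$ are also of the form unit times monomial. So $\lambda\lambda'^{-1}=v\in R^*$ with $\bar v=1$ in $\kappa(\pi)$. Hence $v\equiv 1$ modulo $\pi$, so $v$ has residue $1$ in $\kappa$, and since $R$ is complete, $v$ is a square in $R$, $v=c^2$. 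We set $\mu_1=c\,\mu_1'$, which multiplies $N(\mu_1')$ by $c^2=v$. Then $\prod N(\mu_i)=\lambda$. Since $c$ is a principal unit of $F_\pi$, it is a $2^m$-th power there for all $m$, so (ii) and the $2^m$-condition are preserved.

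The main obstacle is this final step. We must check that the exponent bookkeeping really makes $\lambda\lambda'^{-1}$ a unit of $R$ with residue $1$, rather than just a unit in $F_\pi$. If this fails, we would instead absorb the discrepancy through Lemma \ref{branch-elements} applied with $\mu_{1\pi}$ replaced by $\mu_{1\pi}\cdot(\lambda\lambda'^{-1})$.
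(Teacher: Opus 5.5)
Your proof is correct, and its skeleton matches the paper's:
\begin{itemize}
\item take monomial representatives $a_i=u_i\pi^{\epsilon_i}\delta^{\epsilon_i'}$ of the square classes of the $a_{i\pi}$;
\item approximate $\phi_i(\mu_{i\pi})$ by elements of the form $w\pi^{r}\delta^{s}\sqrt{a_i}^{s'}$ as in Lemma \ref{branch-elements};
\item correct the product of norms by a global square.
\end{itemize}

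The real difference is how strong the approximation is. The paper only arranges $\phi_i(\mu_{i\pi})=\mu_i c_i^2$, using \cite[Remark 7.1]{LocalglobalPF} in the unramified case. It then replaces $\mu_1$ by $\mu_1\theta^{-1}$, where $\theta^2=\lambda^{-1}\prod_i N(\mu_i)$. This never produces the $2^m$-th power clause. Moreover, $\theta=\pm\prod_i N(c_i)^{-1}$ need not be a square in $L_{1\pi}$, so after the correction even squareness of the first quotient is not guaranteed.

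You instead lift residues exactly, so each quotient $\xi_i$ is a principal unit of $L_{i\pi}$, hence a $2^m$-th power for every $m$ by Hensel's lemma. Then $\lambda\lambda'^{-1}=\prod_i N(\xi_i)$ is a unit of $R$ congruent to $1$ modulo $\pi$. So the correcting factor $c$ is itself a principal unit, and the local condition survives. This costs a slightly more careful residue computation. In return it proves the lemma as stated, including the clause that Proposition \ref{curve-point-choice0} and the patching in Theorem \ref{thm9.1} actually use.

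The worry in your last paragraph is unfounded. Since $N(w_i')\in R^*$ and $N(\sqrt{a_i})=-a_i$, every norm is a unit of $R$ times a monomial, and the bookkeeping goes through.

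Three details to tidy:
\begin{itemize}
\item Choose the sign of $c$ so that $c\equiv 1 \pmod{\pi R}$. Otherwise $c$ is $-1$ times a principal unit.
\item Lemma \ref{branch-elements} as written does not treat the subcase $\epsilon_i=0$, $\epsilon_i'=1$. There the residue field of $L_{i\pi}$ is the fraction field of the DVR $(R/\pi)[\sqrt{\bar u_i\bar\delta}]$, whose uniformizer is $\sqrt{\bar a_i}$ rather than $\bar\delta$. This is where the factor $\sqrt{a_i}^{s_i'}$ is needed. Your exact lifting still works, since $x+y\sqrt{a_i}$ with $x\in R^*$ is a unit of $R[\sqrt{a_i}]$.
\item When $a_{i\pi}\in F_\pi^{*2}$, work componentwise in $F_\pi\times F_\pi$, as the paper does.
\end{itemize}
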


\begin{proof} Applying (\ref{branch-elements}) for $a_{i\pi}$ with $a = 1$,
we get $a_i = u_i \pi^{\epsilon_i} \delta^{\epsilon_i'}$ with $u_i \in R^*$ such that 
$a_i a_{i\pi} \in F_\pi^{*2}$.  Hence replacing $a_{i\pi}$ by $a_{i}$ we assume that 
$\mu_i \in F_\pi[X]/(X^2 - a_i)$.  

Let $1 \leq i \leq n $. Suppose $a_i$ is a square in $F$. 
Then $F_\pi[X]/(X^2 - a_i) = 
 F_\pi \times F_\pi$ and $\mu_{i\pi} = (\mu'_{i\pi}, \mu''_{i\pi})$.
  Let  $\mu'_i, \mu''_i \in F$ be as in (\ref{branch-elements}) corresponding to
  $\mu_{i\pi}'$ and $\mu_{i\pi}''$ and $\mu_i = (\mu'_i, \mu_i'') \in F[X]/(X^2 - a_i)$. 
Suppose $a_i$ is not a square. Let  $\mu_i  \in F(\sqrt{a_i})$ be as in (\ref{branch-elements}) corresponding to
  $\mu_{i\pi}$.  Then, for each $i$, $N_{F[X]/(X^2 - a_i)/F}(\mu_i)$ is a product of a unit  in $R$ and powers of $\pi$ and $\delta$.
  Hence $\prod_i N_{F[X]/(X^2 - a_i)/F}(\mu_i)$ is a product of a unit in $R$  and powers of $\pi$ and $\delta$.
  Since $\mu_{i\pi} = \mu_i c_i^2$ for some $c_i \in F_\pi(\sqrt{a_i})$ 
  and $\prod_i N_{L_{i\pi}/F_\pi}(\mu_{i\pi}) = \lambda$, 
 $ \lambda^{-1}\prod_i N_{F[X]/(X^2 - a_i)/F}(\mu_i)$  is a  square in $F_\pi$.  
 Since  $ \lambda^{-1}\prod_i N_{F[X]/(X^2 - a_i)/F}(\mu_i)$ is a product of a unit in $R$ and powers of $\pi$ and $\delta$, 
  there exists   $\theta \in F^*$  such that  
  $\lambda^{-1}\prod_i N_{F[X]/(X^2 - a_i)/F}(\mu_i) = \theta^2$ (cf. \cite[Remark 7.1]{LocalglobalPF}).
  Replacing $\mu_1$ by $\mu_1\theta^{-1}$, we have the required $\mu_i$.
\end{proof}

The following is well known.

\begin{lemma} 
\label{2dim-quat} 
Suppose that  $\kappa$ is a finite field.  Then $D = (v, \pi)$ or $(v, \delta)$ or  $(v, \pi\delta)$ or $(v_1\pi,  v_2\delta)$  for some units 
$v, v_1, v_2 \in R$.
\end{lemma}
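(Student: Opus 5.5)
The plan is to describe the $2$-torsion part of $Br(F)$ that comes from algebras ramified only along $\pi$ and $\delta$, and then write every element of it as a single quaternion algebra of one of the listed shapes. I read the statement in its intended setting: $D$ has period $2$ and is unramified on $R$ away from the divisors $(\pi)$ and $(\delta)$, which is the situation at nodal points after the choice of model in \S\ref{section2}.

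\textbf{Step 1: decompose the class of $D$.} By the standard description of the Brauer group of a complete regular local ring of dimension $2$ with ramification along a normal-crossings divisor (Saltman; cf.\ the results used as \cite[Lemma 4.1]{PPS}, applied twice, first to the residue along $\pi$ and then along $\delta$), I will write
$$[D] = [D_0] + (u,\pi) + (v,\delta) + \epsilon(\pi,\delta)$$
in $Br(F)$. Here $D_0$ is unramified on $R$, $u,v \in R^*$ are units, and $\epsilon\in\{0,1\}$.

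\textbf{Step 2: kill the unramified part and reduce the units.} Since $R$ is complete, $Br(R)\cong Br(\kappa)$, and $Br(\kappa)=0$ because $\kappa$ is finite. Hence $D_0$ is split. Also by Hensel's lemma (char $\kappa\neq 2$) we have $R^*/R^{*2}\cong\kappa^*/\kappa^{*2}$, which has order $2$. Fix a unit $v_0$ whose residue is a non-square; then every unit is $1$ or $v_0$ modulo squares. Therefore
$$[D]= \alpha(v_0,\pi)+\beta(v_0,\delta)+\epsilon(\pi,\delta), \qquad \alpha,\beta,\epsilon\in\{0,1\}.$$

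\textbf{Step 3: run through the eight cases.} For each case I will exhibit a quaternion algebra of one of the listed forms with the same class. Two facts are used throughout: $(v_0,v_0)=(v_0,-1)=0$, since it is a symbol in units and hence unramified, so trivial by Step 2; and bilinearity of symbols.
\begin{itemize}
\item The trivial class is $(1,\pi)$.
\item The cases $(v_0,\pi)$ and $(v_0,\delta)$ are already of the listed form.
\item $(v_0,\pi)+(v_0,\delta)=(v_0,\pi\delta)$.
\item $(\pi,\delta)=(1\cdot\pi,1\cdot\delta)$.
\item $(v_0,\pi)+(\pi,\delta)=(\pi,v_0\delta)$, and symmetrically $(v_0,\delta)+(\pi,\delta)=(v_0\pi,\delta)$.
\item For the last case, expanding gives
$$(v_0\pi,v_0\delta)=(v_0,v_0)+(v_0,\delta)+(\pi,v_0)+(\pi,\delta)=(v_0,\pi)+(v_0,\delta)+(\pi,\delta).$$
\end{itemize}
So in every case $[D]$ equals the class of a quaternion algebra $Q$ of one of the four stated shapes.

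\textbf{Step 4: conclude, and the main obstacle.} If $D$ is a division algebra of index at most $2$, then $D\simeq Q$ by Wedderburn's theorem, because both have degree $2$ and the same class. The main obstacle is Step 1: justifying the decomposition, and in particular that the residues of $D$ along $\pi$ and $\delta$ are of the form $(u)$ and $(v)$ with $u,v$ units, up to the $(\pi,\delta)$ correction term. I would handle this by first taking the residue of $D$ along $\pi$. That residue is a square class in the fraction field of the complete discrete valuation ring $R/(\pi)$, so it can be written as $\bar u\bar\delta^{\epsilon}$. Subtracting $(u,\pi)+\epsilon(\pi,\delta)$ then leaves an algebra ramified only along $\delta$, and I would repeat the same argument there.
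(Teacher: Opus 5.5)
Your proposal is correct and is essentially the paper's argument: the paper also quotes Saltman's structure theorem to write $D = D_0 + (u,\pi) + (v,\delta)$ or $D_0 + (u\delta, v\pi)$ (your term $\epsilon(\pi,\delta)$ is just the expanded form of the second option), kills $D_0$ via $Br(R)\cong Br(\kappa)=0$, and finishes with the same case analysis using that $R^*/R^{*2}$ has order $2$. The one caveat concerns your fallback sketch of Step 1, which as written is not sufficient on its own: the second pass along $\delta$ needs the residue of a class ramified only along $\delta$ to be a unit class (true because the residue maps form a complex, so the second residue at the closed point vanishes), and then purity $Br(R)=\bigcap_{\mathrm{ht}\,\mathfrak{p}=1}Br(R_{\mathfrak{p}})$; both facts are covered by the Saltman citation you share with the paper, and applying the DVR result \cite[Lemma 4.1]{PPS} twice does not supply them.
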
 

\begin{proof} By  (\cite[Theorem 2.1]{salt}),  we have 
$D = D_0 +  (u,\pi) + (v, \delta)$ or $D_0 + (u\delta, v\pi)$, where   $D_0$ is unramified and  $u, v \in R^*$ are units.  
Since $\kappa$ is finite field and $D_0$ is unramified, $D_0  = 0$. Hence $D =    (u,\pi) + (v, \delta)$ or $  (u\delta, v\pi)$  for some 
units $u, v \in R^*$.  

Suppose $D  =    (u,\pi) + (v, \delta)$ with $u, v \in R^*$.  If $u$ are $v$ is a square in $R$, then $D = (u, \pi)$ or $(v, \delta)$.
Suppose $u$ and $v$ are not  squares. Since $\kappa$  is a finite field and $R$ is complete, $uv$ is a  square in $R$.
Hence $D = (u, \pi) + (u, \delta)  =  (u, \pi\delta)$. 
\end{proof}

\begin{lemma}
\label{curve-points0-norms} 
Suppose that  $\kappa$ is a finite field.    Let $D$ be a quaternion algebra over 
$F$ which is unramified on  $R$ except possibly at $(\pi)$ and $(\delta)$.
Let $a  =  u \pi^{\epsilon}\delta^{\epsilon'} \in R$ with $u \in R^*$
and $\mu  = w  \pi^{r}\delta^{s} \sqrt{a}^{s'} \in F(\sqrt{a})$  for some $w$ a unit in the integral closure of 
$R$ in $F(\sqrt{a})$.   If $\mu$ is a reduced norm from $D\otimes F_\pi(\sqrt{a})$, 
then $\mu$ is a reduced norm from $D\otimes F(\sqrt{a})$. 
\end{lemma}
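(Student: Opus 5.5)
We reduce to an explicit quaternion-algebra computation over the two-dimensional complete local ring, using Lemma \ref{2dim-quat} to put $D$ in normal form. Let $E = F(\sqrt{a})$, let $R'$ be the integral closure of $R$ in $E$, and write $E_\pi = F_\pi(\sqrt{a})$. If $a$ is a square in $F$, then $E = F \times F$ and the statement reduces componentwise to the case $a=1$, so assume $a$ is not a square.

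\textbf{Reduction to a regular local ring.} We first arrange that $R'$ is again a complete regular local ring with a regular system of parameters $(\pi', \delta')$ adapted to $D$. If $\epsilon,\epsilon'$ are even, $R' = R[\sqrt{u}]$ is complete regular local (or a product of two such rings) with parameters $\pi,\delta$. If exactly one of $\epsilon,\epsilon'$ is odd, say $\epsilon$, then $R' = R[\sqrt{u\pi\delta^{\epsilon'}}]$ up to a unit twist, and it is regular with parameters $(\sqrt{a}/\pi^t, \delta)$ or a similar pair. The case where both are odd ($a = u\pi\delta$ up to squares) gives a non-regular (A$_1$-type) ring. In that case we handle $E$ directly rather than through regularity.

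\textbf{The quaternion case.} Since $D$ is a quaternion algebra, $\mu$ is a reduced norm from $D\otimes E$ if and only if $D_E$ is split, or $\mu \in N(D_E)$, which for $D_E=(x,y)$ means $\mu$ is represented by $\langle x, y, -xy\rangle$-type norms, i.e.\ $(x,y)\cdot(\mu)$-condition: $\mu$ is a reduced norm iff the quadratic form $\langle 1,-x,-y,xy,-\mu\rangle$ is isotropic, equivalently $(x,y)\otimes E(\sqrt{\mu})$ is split. By Lemma \ref{2dim-quat} and the normal-crossing hypothesis, $D_E$ is of the form $(v,\pi)$, $(v,\delta)$, $(v,\pi\delta)$ or $(v_1\pi, v_2\delta)$, with the parameters rewritten in $E$. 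Because $\mu$ has the monomial shape $w\pi^r\delta^s\sqrt{a}^{s'}$, the field $E(\sqrt{\mu})$ is again a monomial (Kummer) extension of the type in Lemma \ref{branch-elements}.

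\textbf{Local-global step.} The key claim is that for such monomial data, splitting of $(x,y)\otimes E(\sqrt{\mu})$ over the completion at $\pi$ implies splitting globally. We verify this by residues. Over the complete two-dimensional ring with finite residue field, the class of $(x,y)\otimes E(\sqrt{\mu})$ is determined by its residues along $(\pi)$ and $(\delta)$; there are no unramified classes since $\kappa$ is finite, as in the proof of Lemma \ref{2dim-quat}. The residue along $\pi$ vanishes because the algebra splits over $E_\pi(\sqrt{\mu})$. The residue along $\delta$ is a character of the residue field of $\delta$, which is a complete discretely valued field with finite residue field. By the reciprocity/compatibility of residues at the closed point for monomial classes (all entries are units times $\pi,\delta$), this residue is forced to vanish once the $\pi$-residue does. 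The key point is that a class of the form $(v\pi^\alpha\delta^\beta, v'\pi^\gamma\delta^\eta)$ with nonzero $\delta$-residue has nonzero $\pi$-residue, which can be checked by enumerating the four normal forms. Hence the algebra is split, and $\mu$ is a reduced norm.

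\textbf{Main obstacle.} The expected main difficulty is the case-by-case residue verification, especially when $a$ involves $\pi\delta$ and $R'$ is not regular. There the plan is to work instead with the ramified extension $F_\pi(\sqrt{a})$, passing to a blow-up or directly using the explicit shape $\mu=w\pi^r\delta^s\sqrt a^{s'}$ to compute the Hilbert symbol.
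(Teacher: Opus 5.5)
\paragraph{The false criterion.} Your argument breaks where you reformulate the reduced-norm condition. For a quaternion algebra $Q=(x,y)$ over $E$ it is true that $\mu\in\mathrm{Nrd}(Q)$ iff $\langle 1,-x,-y,xy,-\mu\rangle$ is isotropic, i.e.\ iff $(\mu)\cup[Q]=0$ in $H^3(E,\Z/2)$. This is \emph{not} equivalent to $Q\otimes E(\sqrt{\mu})$ being split. For $\mu\notin E^{*2}$, splitting over $E(\sqrt{\mu})$ means $Q\simeq(\mu,z)$, and then $(\mu)\cup[Q]=(\mu,\mu,z)=(-1)\cup[Q]$, which need not vanish.

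The failure already occurs in the setting of the lemma. Take $R=\mathbb{Z}_3[[t]]$, $\pi=3$, $\delta=t$, $D=(3,t)$. Then $D$ splits over $F(\sqrt{3})$, yet $(3)\cup(3,t)=(-1,3,t)$ has residue $\pm(-1,\bar t)\neq 0$ in $\mathrm{Br}(\mathbb{F}_3((t)))$. So $3$ is not a reduced norm, even over $F_\pi$. Conversely, $-3\in\mathrm{Nrd}(D)$ although $D$ does not split over $F(\sqrt{-3})$. Hence neither step holds: the local hypothesis does not give local splitting, and global splitting does not give $\mu\in\mathrm{Nrd}(D_E)$.

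Your residue computation therefore tracks the wrong invariant: it uses characters of the residue fields for a Brauer class over $E(\sqrt{\mu})$, whose integral closure need not even be regular. The correct obstruction is $(\mu)\cup[D_E]\in H^3(E,\Z/2)$, whose residues lie in the Brauer groups of the local fields $\kappa(\pi')$, $\kappa(\delta')$. In the regular cases your strategy can be repaired this way. Symbols of two units vanish, because the Brauer group of the complete local ring is $\mathrm{Br}(\kappa)=0$. So the class collapses to a single $(v,\pi',\delta')$ with $v$ a unit, and its residue $(\bar v,\bar\delta')$ at $\pi'$ is nonzero unless $v$ is a square. This is essentially the content of \cite[Corollary 7.3]{LocalglobalPF}, which the paper simply cites when $\epsilon$ or $\epsilon'$ is even.

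\paragraph{The case with both exponents odd.} You flag the case $\epsilon,\epsilon'$ both odd as the main obstacle but give no argument for it. The paper needs no regularity there. With $a=u\pi\delta$ and $L=F(\sqrt a)$, one has $\pi\delta\equiv u^{-1}$ modulo $L^{*2}$, so the normal forms of Lemma \ref{2dim-quat} collapse over $L$:
\begin{itemize}
\item $(v,\pi\delta)_L=(v,u)_L=0$;
\item $(v_1\pi,v_2\delta)_L$ becomes $(v',\delta)_L$ for a unit $v'$;
\item $(v,\pi)_L=(v,\delta)_L$.
\end{itemize}
Hence if $D_L$ is not split, then $D_L=(v,\delta)_L=(v,\pi)_L$, so $-\pi$ and $-\delta$ are reduced norms. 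Over $F_\pi(\sqrt a)$ the element $\sqrt a$ is a uniformizer, and $D$ is unramified with non-split residue algebra $(\bar v,\bar\delta)$ over the local field $\kappa(\pi)$. The local hypothesis therefore forces the valuation $2r+s'$ of $\mu$ to be even, i.e.\ $s'$ is even. Then $\mu$ is $\pm$ a unit of $R'$ times powers of $-\pi$ and $-\delta$. Every unit of the complete local ring $R'$ (finite residue field of characteristic $\neq 2$) is a norm from the maximal subfield $L(\sqrt v)$ of $D_L$. Your proposal is missing this explicit use of the monomial shape of $\mu$, together with the collapse of the normal forms over $L$.
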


\begin{proof}  Let  $L = F(\sqrt{a})$ and $S$  the integral closure of $R$ in $L$. 

Suppose $\epsilon$ or $\epsilon'$ is  even. Then $S$ is a 
regular  local ring with maximal ideal $(\pi', \delta') $ for  primes $\pi'$ and $\delta'$
lying over $\pi$ and $\delta$.  Since $D$ is unramified on $R$ except possible at $(\pi)$ and $(\delta)$, 
$D\otimes_F L$ is unramified on $S$ except possible at $(\pi')$ and $(\delta')$.
Hence, the result follows from \cite[Corollary 7.3]{LocalglobalPF}.

Suppose both $\epsilon$ and $\epsilon'$ are odd.
Then $a = u\pi\delta a_1^2 $ for some $a_1 \in F^*$. 
Without loss of generality, we assume that $a = u\pi\delta$. 

Suppose $D\otimes L $ is split.  Then every  element is a reduced norm from $D\otimes L  $.
Hence we assume that  $D\otimes L $ is non split.

Since $\kappa$ is a finite field, we have 
$D = (v, \pi)$ or $(v, \delta)$ or  $(v, \pi\delta)$ or $(v_1\pi,  v_2\delta)$  for some units 
$v, v_1, v_2 \in R$ ( cf. \ref{2dim-quat}).

 Since $\kappa$ is a finite field, $Br(R)\cong Br(\kappa)=0$ and $(w_1, w_2 )$ is split for all units $w_1, w_2  \in R^*$ (cf. \cite[Lemma 5.3]{PPS}). Since $(v, \pi\delta) \otimes   L  = (v, u) \otimes L$  is split, 
 $ D \neq (v, \pi\delta)$.    Since $ (v_1\pi, v_2\delta) \otimes L   = (v_1 v_2 u, \delta ) \otimes L $, 
 without loss of generality we assume that $D = (v, \pi)$ or $(v, \delta)$ for some unit $v \in R^*$.  
Since $(v, \pi) \otimes L  = (v, \delta) \otimes L  $, we assume that 
$D = (v, \delta)$. 

Since $a = u\pi \delta$,  $\sqrt{a}$ is a parameter in $F_\pi(\sqrt{a})$. 
Suppose that $\mu$ is a reduced norm from $D\otimes F_\pi(\sqrt{a})$.  
Since $D \otimes F_{\pi}(\sqrt{a})$ is unramified and $\sqrt{u\pi\delta}$ is a parameter in
$F_{\pi}(\sqrt{a})$, it follows that $s'$ is even, because the cup product
$\partial(\mu) \cup [D]$ vanishes in
$
H^3\big(F_{\pi}(\sqrt{a}), \mu_2^{\otimes 2}\big)$ (cf. \cite[Theorem 8.9.1]{GS2017}, \cite[section 4]{PPS}).
  Since  $D \otimes L  = (v, \delta) \otimes L  =  (v, \pi) \otimes L$,
$-\pi$ and $-\delta$ are reduced norms from $D$. Hence it is enough to show that 
$\pm w$ is a reduced norm from $D\otimes L$.  Since $w$ is a unit in $R$ and 
the residue field of $R$ is finite, the  quaternion algebra $(v, \pm w)$ is split over $L$ by the same reason as before, namely that $Br(\kappa)=0$.
Hence $\pm w$  is a reduced norm from $L(\sqrt{v})$. Since $L(\sqrt{v}) $ is a maximal subfield of 
$D\otimes L$, $\pm w$ is a reduced norm from $D\otimes L$.
\end{proof}

 Let  $R_0$ be a complete two dimensional regular local ring with   $m = (\pi, \delta)$ the maximal ideal of $R_0$,
 $\kappa_0 = R/m$ and $F_0  $ field of fractions of $R_0$.   Suppose that 
  $\kappa_0$ a finite field of char not equal to 2. 
 Let  $w \in R_0$  be a unit which is not a square in $R_0$.
 Then every unit  $u$ in $R_0$ is either a square in $R_0$ or $ uw$ is a  square in $R_0$.  Let  
  $d = w$ or  $\pi$.  Let  $F = F_0(\sqrt{d})$ and $R$ the integral closure of $R_0$ in $F$.  
  Then $R$ is a regular local ring with maximal ideal $m_R = (\pi', \delta)$  with $\pi' = \pi$ if $d = w$ and 
  $\pi' = \sqrt{\pi}$ if $d = \pi$ (\cite[Lemma 3.1, Lemma 3.2]{uinv}). Let $D/F$ be a central  division algebra of  period 2 which is
unramified on $R$ except possibly at 
$(\pi')$ and $(\delta)$.

\begin{prop}
\label{2dim-class} 
Suppose that $D$ admits a $F/F_0$-involution.  
Then there exists a quaternion  division algebra  $D_0/F_0$ such that 
$D \simeq  D_0\otimes F$  and \\
i) if $d = w$, then $D_0 = (\pi, \delta)$ \\
 ii) if $d = \pi$, then   $D_0 = (w, \delta)$. 
\end{prop}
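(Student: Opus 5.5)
We classify $D$ over the two dimensional complete regular local ring $R$ and then use the corestriction condition coming from the unitary involution. Since $R$ is a complete regular local ring with finite residue field $\kappa$ ($\kappa$ is finite because $\kappa_0$ is and $[F:F_0]=2$), and $D$ has period $2$ and is unramified on $R$ except possibly along $(\pi')$ and $(\delta)$, the argument of Lemma \ref{2dim-quat} applies to $R$ with parameters $\pi',\delta$. It gives that $D$ is one of $(v,\pi')$, $(v,\delta)$, $(v,\pi'\delta)$ or $(v_1\pi',v_2\delta)$ with $v,v_1,v_2\in R^*$; the decomposition of \cite[Theorem 2.1]{salt} is stated for any period $2$ class, so index $\le 2$ follows as well. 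Because $R$ is complete with finite residue field, every unit of $R$ is a square or a nonsquare-unit times a square. So each $v_i$ may be taken to be $1$ or a fixed nonsquare unit $w'$ of $R$.

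In case i), $d=w$, we have $\pi'=\pi$, the extension $R/R_0$ is unramified, and the residue field of $R$ is the quadratic extension of $\kappa_0$. Then $w$ is a square in $R$, and every unit of $R_0$ is a square in $R$. A nonsquare unit $w'$ of $R$ can be chosen with $N_{F/F_0}(w')$ a nonsquare unit of $R_0$, because the norm map on finite fields is surjective.

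The key step is the condition $\mathrm{cores}_{F/F_0}(D)=0$, which holds because $D$ has an $F/F_0$-involution. We will test each candidate using the projection formula. We have $\mathrm{cores}(w',\pi)=(N(w'),\pi)=(w,\pi)$, which is nonzero over $F_0$, and similarly $\mathrm{cores}(w',\delta)=(w,\delta)\neq 0$. Hence $(w',\pi)$, $(w',\delta)$, $(w',\pi\delta)$ and the mixed forms $(w'\pi,\delta)$ and $(\pi,w'\delta)$ all have nonzero corestriction, since the residues at $\pi$ or $\delta$ survive. For $(\pi,\delta)$ we get $\mathrm{cores}((\pi,\delta)_F)=2(\pi,\delta)=0$.

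Since $D$ is a division algebra, it is nonsplit. The only survivor is $D\simeq(\pi,\delta)\otimes F$, and we set $D_0=(\pi,\delta)$. This $D_0$ is division over $F_0$ because it is division after extension to $F$.

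In case ii), $d=\pi$, we have $\pi'=\sqrt\pi$, the extension is totally ramified along $\pi$, and the residue fields agree. So $R^*/R^{*2}=\{1,w\}$ with $w\in R_0$, and the $v_i$ may be taken in $\{1,w\}$. Here $\mathrm{cores}((x)\cup(y))$ with $x\in F_0$ equals $(x,N(y))$. We have $N(\pi')=-\pi$, $N(\delta)=\delta^2$, and $N(w)=w^2$.

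We compute the candidates:
\begin{itemize}
\item $\mathrm{cores}(w,\pi')=(w,-\pi)=(w,\pi)\neq0$ up to the unit $-1$, and $(-1,w)$ is split over the finite residue field.
\item $\mathrm{cores}(w,\delta)=(w,\delta^2)=0$.
\item $\mathrm{cores}(w,\pi'\delta)=(w,\pi)\neq0$.
\item The mixed forms $(w\pi',\delta)$ and $(\pi',w\delta)$ give residue contributions at $\delta$ of the form $(\pi,\delta)$, which are nonzero.
\end{itemize}
We also need $\mathrm{cores}(\pi',\delta)$. Since $\delta\in F_0$, it equals $(-\pi,\delta)$, which is nonzero because its residue along $\delta$ is the class of $-\pi$ in $\kappa(\delta)$, a uniformizer there.

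Thus only $D=(w,\delta)_F$ survives, and $D_0=(w,\delta)$. It is division over $F_0$ because $D$ is division.

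The main obstacle is making the corestriction computations rigorous for the nonobvious terms, namely the mixed forms and $(\pi',\delta)$ in case ii). I would verify each of them by computing residues of the corestriction along $(\pi)$ and $(\delta)$ in $F_0$. This uses the compatibility of residue with corestriction, with the residue field extension at $\delta$ being $\kappa_0((\pi))(\sqrt\pi)$. A nonzero residue then shows the corestriction is nonzero, because classes over $F_0$ are detected by residues.
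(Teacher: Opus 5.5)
Your proposal is correct and follows essentially the paper's route: you classify $D$ over $R$ via Lemma \ref{2dim-quat}, then use $\mathrm{cores}_{F/F_0}(D)=0$ to eliminate every candidate except $(\pi,\delta)_F$ (resp.\ $(w,\delta)_F$); your projection-formula computations on unit representatives in $\{1,w'\}$ just replace the paper's Hilbert~90 descent of the unit into $R_0^*$. The only slip is that you omit the mixed form with both units nonsquare, namely $(w'\pi,w'\delta)$ in case i) and $(w\pi',w\delta)$ in case ii), but the same computation disposes of it: its corestriction is $(w,\pi\delta)$, resp.\ $(w\delta,-\pi)$, and both are nonzero.
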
 

 \begin{proof} Suppose $d = w$.  Then $R$ is a   complete regular local ring with maximal ideal $(\pi,\delta)$.
 Hence,  by (\ref{2dim-quat}), we have $D = (u, \pi)$ or $(u, \delta)$ or $(u, \pi\delta)$ or $(u\pi, v\delta)$ for some units 
 $u, v \in R^*$.  
 
  Suppose $D= (u, \pi a)$ for some unit $u \in  R^*$ and $a \in F^*$.    Since $D$ has a $F/F_0$-involution, 
  cores$_{F/F_0}(D)=(N_{F/F_0}(u),\pi a)   = 0 \in Br(F_0)$ (cf. \cite[Chapter 1, 3.B.]{knus}). Since $D$ is not split and  $\pi a$ is a parameter of $F,$ cores$_{F/F_0}(u) = N_{F/F_0}(u) = 1$.
Thus there exists $\theta \in R^*$ such that $u  = \theta \bar{\theta}^{-1}$. Since $u = \theta\bar{\theta} \bar{\theta}^{-2}$ 
  and $\theta\bar{\theta} \in R_0^*$,
  we can replace $u$ by $\theta$ and assume that $u  \in R_0^*$. Since $\kappa_0$ is a finite field and $w \in R_0^*$ not a square, 
  $u$ is a square in $R$. Hence $D = 1$. Since $D$  is a quaternion division algebra, $D \neq  (u, \pi)$. and $D \neq (u, \pi\delta)$. 
  
  Suppose $ D = (u, \delta)$. Since $D$ has a $F/F_0$-involution, cores$_{F/F_0}(D) = 0$.
  Since $\delta \in F_0$, we have cores$_{F/F_0}(u, \delta) = (N_{F/F_0}(u), \delta)$ (The reference on the corestriction of a cyclic algebra is mentioned in the proof of \ref{dvr-class-4}).
  Since $\delta$ is a regular parameter in $R_0$,  $N_{F/F_0}(u)$ is a unit in $R_0$ and 
  $(N_{F/F_0}(u), \delta) = 1$, it follows that $N_{F/F_0}(u)$  is a square in $R_0$. Thus 
  $N_{F/F_0}(u) = v^2$ for some $v \in R_0^*$. Then $N_{F/F_0}(uv^{-1}) = 1$. Hence, as above, 
  $u = vv_1$ for some $v_1 \in R_0^*$ and hence $u$ is a square in $R$. Thus $D \neq (v, \delta)$.
  
  Hence $D = (u\pi , v\delta)$ for some units $u, v \in R^*$. As above  we can choose $u, v \in R_0^*$.

  Suppose $d = \pi$.  Then  $R$ is a  complete  regular local ring with maximal ideal $(\sqrt{\pi}, \delta)$.
  Hence, by (\ref{2dim-quat}), $D = (u, \sqrt{ \pi} )$ or $(u, \delta)$ or $(u, \sqrt{\pi} \delta)$ or $(u\sqrt{\pi}, v\delta)$ for some units 
 $u, v \in R^*$.   Since the residue fields of $R$ and $R_0$ are  isomorphic, we can assume that $u, v \in R_0^*$. 
 Suppose $D = (a, \sqrt{\pi} b )$ for some $a, b  \in R_0$ not divisible by $\sqrt{\pi}$. Since cores$_{F/F_0}(a, b) = 2(a, b) = 0$, 
 cores$_{F/F_0} (D) = (a, -\pi)$. Since cores$_{F/F_0}(D) = 0$, $(a, -\pi) = 0$. By taking the residue at $\pi$, we see that 
 the image of $a$ in the residue field at $(\pi)$ is  a square.  Hence $a \neq u$ and $a \neq   v\delta$.
 Thus $D = (u, \delta)$. Since $D$ is a division algebra, $u$ is not a square in $R_0$. Hence $D = (w, \delta)$.  
     \end{proof}

Let   $\lambda = u \pi^r\delta^s \in F_0$ with $u \in R_0$ a unit and $r,s \in \Z$. 
Suppose that $\lambda$ is a reduced norm from $D$.
In this section we construct quadratic extensions $L_1, L_2, L_3$  of $F_0$ and $\mu_i \in L_i$ with 
$\prod_i N_{L_i/F_0} (\mu_i) = \lambda$ and satisfying  some other properties. 
These results are used in the proof of the main theorem.  Let $\epsilon_1, \epsilon_2 \in\{0, 1 \}$
such that $r = 2r_1 + \epsilon_1 $ and $s =  2s_1+\epsilon_2$ for some $r_1, s_ 1 \in {\mathbb Z}$. 
Then $\lambda = u \pi^{\epsilon_1} \delta^{\epsilon_2} (\pi^{r_1} \delta^{s_1})^2$.

\begin{prop} 
\label{2dim-square}
Let $D_0$ be a quaternion division algebra over $F_0$ which is unramified on 
$R_0$ except possibly at $(\pi)$ and $(\delta)$. Suppose that $\lambda $ is a reduced norm from $D_0$. 
Then  there exist  $a_i  \in F_0^*$ 
  and $\mu_i  \in L_i = F_0[X]/(X^2 - a_i)$  for $i = 1, 2$ such that 
  
i)  $a_1 = v\pi^{\epsilon_1} \delta^{\epsilon_2}
  \in F_0^* \setminus F_0^{*2}$ with   $v\in R_0^*;$ 

ii) $a_2 \in R_0$,  $a_2$ is a unit at $(\pi)$  and  $(\delta)$,  
$\partial_{\pi}(D_0) = \bar{a}_2 \in\kappa(\pi)^*/\kappa(\pi)^{*2}$   and
$\partial_{\delta}(D_0) = \bar{a}_2 \in\kappa(\delta)^*/\kappa(\delta)^{*2}.$
  
iii)  $ N_{L_1/F_0}(\mu_1) \cdot N_{L_2/F_0}(\mu_2)= \lambda;$

iv)  $\mu_i$ is a reduced from from $ D_0 \otimes  L_i$ for $i = 1, 2;$

v) $\mu_2$ is a unit in $R_0[X]/(X^2 - a_2).$ 
\end{prop}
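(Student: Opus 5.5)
The approach is to split $\lambda$ into a ``parametric'' part, carried by a ramified quadratic extension $L_1$ with $\mu_1$ essentially a square root of $\pm a_1$, and a ``unit'' part, carried by an unramified quadratic extension $L_2$ with $\mu_2$ a unit. Write $\lambda=\lambda_0\theta^2$ with $\lambda_0=u\pi^{\epsilon_1}\delta^{\epsilon_2}$ and $\theta=\pi^{r_1}\delta^{s_1}$. Since $\theta^2=N_{L_1/F_0}(\theta)$ and $\theta\in F_0$ is a reduced norm from $D_0\otimes L_1$, it suffices to treat $\lambda_0$.

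\textbf{Step 1: the algebra and $L_2$.} By (\ref{2dim-quat}), $D_0$ is one of $(v,\pi)$, $(v,\delta)$, $(v,\pi\delta)$ or $(v_1\pi,v_2\delta)$ with units $v,v_1,v_2\in R_0^*$. Since $\kappa_0$ is finite, every unit of $R_0$ is a square or $w$ times a square, for a fixed nonsquare unit $w$. I take $a_2$ to be the unit representing the residues of $D_0$ at $(\pi)$ and $(\delta)$. Generically this means $a_2=w$; I will read off the required normalisation of $\partial_\pi(D_0)$ and $\partial_\delta(D_0)$ case by case, replacing $D_0$ by an isomorphic presentation where needed. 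Put $L_2=F_0(\sqrt{a_2})$. This extension is unramified over $R_0$, and its integral closure $S_2=R_0[\sqrt{a_2}]$ is a complete regular local ring with finite residue field.

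Two facts about $L_2$ follow. First, by Hensel's lemma every unit of $R_0$ is a norm of a unit of $S_2$. Second, every unit of $S_2$ is a reduced norm from $D_0\otimes L_2$. For this I use the argument at the end of (\ref{curve-points0-norms}): $D_0\otimes L_2$ becomes $(v',\pi)$ or $(v',\delta)$ or split, and quaternion symbols of units over $S_2$ are split because $Br(\kappa)=0$, so units are norms from a maximal subfield.

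\textbf{Step 2: the choice of $a_1$ and $\mu_1$.} If $\epsilon_1=\epsilon_2=0$, take $a_1=w$ (so $v=w$) and $\mu_1=1$. Then $\lambda_0=u$ is absorbed entirely by a unit $\mu_2\in S_2$ with $N(\mu_2)=u$. Otherwise, set $a_1=-v\pi^{\epsilon_1}\delta^{\epsilon_2}$ with $v\in\{1,w\}$ still to be chosen, and set $\mu_1=\sqrt{a_1}$, so that $N_{L_1/F_0}(\mu_1)=v\pi^{\epsilon_1}\delta^{\epsilon_2}$. In both cases $a_1$ is not a square, which gives (i). The leftover $\lambda_0/N(\mu_1)=uv^{-1}$ is a unit, and Step 1 supplies a unit $\mu_2$ with $N_{L_2/F_0}(\mu_2)=uv^{-1}$, giving (iii) and (v). (The harmless sign in $a_1$ can be absorbed into the unit $v$.)

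\textbf{Step 3: the main obstacle.} It remains to check (iv) for $\mu_1$, namely that $\sqrt{a_1}$ is a reduced norm from $D_0\otimes L_1$, using the one free bit $v\in\{1,w\}$. Here I would run through the four shapes of $D_0$ together with the three nonzero parity patterns $(\epsilon_1,\epsilon_2)$. In each case I either show that $D_0\otimes L_1$ is split, or rewrite $D_0\otimes L_1$ as $(v',\sqrt{a_1})$ or $(v',-a_1)$ over $L_1$, using the identities from the proof of (\ref{curve-points0-norms}) such as $(v,\pi)\otimes L=(v,\delta)\otimes L$. In the latter situation $\sqrt{a_1}$ is a norm from a maximal subfield.

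The hypothesis that $\lambda$ is a reduced norm from $D_0$ is needed exactly to exclude the bad combinations. Taking residues of the symbol $(D_0,\lambda)$ at $(\pi)$ and $(\delta)$, as in the residue argument of (\ref{curve-points0-norms}), forces for instance $\epsilon_1=0$ or $v\equiv u$ when $D_0=(v,\pi)$. This residue bookkeeping, and matching it with the precise normalisation of $a_2$ demanded in (ii), is where I expect the real work and possible subcases to lie.
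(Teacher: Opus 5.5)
There is a genuine gap, and it is in Step 1. You take $a_2$ to be a unit of $R_0$, so that $L_2$ is unramified and $S_2=R_0[\sqrt{a_2}]$ is regular. For such $a_2$, however, condition (ii) cannot hold unless $D_0\simeq(v,\pi\delta)$.

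The reason is that $\kappa(\pi)$ and $\kappa(\delta)$ are complete discretely valued fields with residue field $\kappa_0$. So for $a_2\in R_0^*$ the image $\bar a_2$ has even valuation in both. Moreover, it is a square in $\kappa(\pi)$ iff it is a square in $\kappa_0$ iff it is a square in $\kappa(\delta)$. This clashes with the residues of $D_0$:
\begin{itemize}
\item For $D_0=(v,\pi)$, $\partial_\pi(D_0)=\bar v$ is nontrivial while $\partial_\delta(D_0)$ is trivial; symmetrically for $(v,\delta)$.
\item For $D_0=(v_1\pi,v_2\delta)$, $\partial_\pi(D_0)=\overline{v_2\delta}$ has odd valuation in $\kappa(\pi)$.
\end{itemize}
Residues depend only on the Brauer class, so ``replacing $D_0$ by an isomorphic presentation'' cannot repair this. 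Condition (ii) only asks $a_2$ to be a unit at the height-one primes $(\pi)$ and $(\delta)$, and the paper uses exactly that freedom. It takes $a_2=v\delta^2+\pi^2$, $v\pi^2+\delta^2$, $v$, $v_1\pi+v_2\delta$ in the four cases, and all of these lie in the maximal ideal except the third.

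Once $a_2$ has this shape, $L_2$ is ramified along $a_2=0$ and $S_2$ need not be regular. Your Hensel argument is then unavailable, and loading the unit part of $\lambda$ onto $\mu_2$ can fail outright. For example, take $-1\notin F_0^{*2}$, $D_0=(v,\pi)$ and $\lambda=w$. Then $\lambda$ is a reduced norm, being a norm from the subfield $F_0(\sqrt v)=F_0(\sqrt w)$, and your Step 2 needs $N_{L_2/F_0}(\mu_2)=w$. But $a_2=\pi^2+v\delta^2=(\pi-c\delta)(\pi+c\delta)$ with $c^2=-v$, and $(w,a_2)$ has nontrivial residue $\bar w$ along $(\pi\pm c\delta)$. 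So $w$ is not a norm from $L_2$. Likewise, no nonsquare unit is a norm from $F_0(\sqrt{v_1\pi+v_2\delta})$.

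Even with $a_2=w$, your claim that every unit of $S_2$ is a reduced norm fails for $D_0=(v_1\pi,v_2\delta)$. Indeed $D_0\otimes F_0(\sqrt w)\simeq(\pi,\delta)$ remains division, and a unit is a reduced norm only if its residue is a square in $\kappa_0(\sqrt{\bar w})$.

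The paper avoids needing norms from $L_2$ altogether:
\begin{itemize}
\item If $\pm\lambda\notin F_0^{*2}$, it takes $a_1=-u\pi^{\epsilon_1}\delta^{\epsilon_2}$ (your $v=u$), $\mu_1=\pi^{r_1}\delta^{s_1}\sqrt{a_1}=\sqrt{-\lambda}$ and $\mu_2=1$. Property (iv) then comes from Lemma 6.2 of [PPS]; this is the input your unexecuted Step 3 is missing.
\item If one of $\pm\lambda$ is a square, $a_1=w$ splits $D_0$ unless $D_0=(v_1\pi,v_2\delta)$. In that case $\lambda$ must itself be a square, since a nonsquare unit is not a reduced norm from $D_0$. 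The paper then uses $\mu_1=\pm v_1^{r_1}v_2^{s_1}\pi^{r_1}\delta^{s_1}$ and $\mu_2=u_1v_1^{-r_1}v_2^{-s_1}\in R_0^*$, together with the splitting of $D_0\otimes L_2$ that follows from $\mathrm{Nrd}(x+y)=-a_2$.
\end{itemize}
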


\begin{proof}   Since $\kappa$ is a finite field, we have  (\cite[Lemma 3.6]{Wu}, cf. \ref{2dim-quat} ) 
$D_0 = (v, \pi)$, $(v, \delta)$, $(v, \pi\delta)$ or
$(v_1\pi, v_2\delta)$ for some units $v, v_1, v_2 \in R_0$. 
If $D_0 = (v, \pi)$, let $a_2 = v\delta^2 + \pi^2$.
If $D_0 = (v, \delta)$, let $a_2 = v\pi^2 + \delta^2$. 
If $D_0 = (v, \pi\delta)$, let $a_2 = v$.  
If $D_0 = (v_1\pi,  v_2\delta)$, let $a_2 = v_1\pi + v_2\delta$.   
Then $a_2$  satisfies the condition ii).

Suppose $\pm \lambda$ are not squares in $F_0$. 
Let $a_1   = - u\pi^{\epsilon_1} \delta^{\epsilon_2}$ and 
 $\mu_1 =  \pi^{r_1} \delta^{s_1} \sqrt{a_1} $. 
 Then $N_{L_1/F_0}(\mu_1) = \lambda$ and  by (\cite[Lemma 6.2]{PPS}),   $\mu_1$  is a reduced norm from $D_0 \otimes L_1$. 
Hence   $a_1, a_2, \mu_1,$ and $\mu_2 = 1$ have the required properties.
 
 Suppose one of $\pm\lambda$ is a square in $F_0$. 
 Then  $\epsilon_1 = \epsilon _2 = 0$, $u = \pm  u_1^2$
for some $u_1 \in R_0^*$ and $\lambda =  \pm u_1^2 \pi^{2r_1} \delta^{2s_1}$. 
Let $a_1 =  w \in R_0^*$ not a square. Then $a_1$ satisfies the condition i). 
 
Suppose  $\lambda$ is a square. Then $\lambda   =  u_1^2 \pi^{2r_1} \delta^{2s_1}$. 
Suppose $D_0 = (v_1\pi, v_2\delta)$.
Let    $\mu_1 =  (-1)^{r_1 + s_1} v_1^{r_1}v_2^{s_1} \pi^{r_1}\delta^{s_1}$
and $\mu_2 = u_1 v_1^{-r_1}v_2^{-s_1}$.   Then $N_{L_1/F_0}(\mu_1) N_{L_2/F_0}(\mu_2) = \mu_1^2\mu_2^2 =  \lambda$. 
Since $-v_1\pi$ and $-v_2\delta$ are reduced norms from $D_0$,  
$\mu_1$ is a reduced norm from $D_0$. 
Let $x, y \in D_0$ such that $x^2 = v_1\pi$, $y^2 = v_2\delta$ and $xy = -yx$. Then 
 $Nrd(x + y)  = -(x^2 + y^2) = -(v_1\pi + v_2\delta) = -a_2$. In this case, $v_2\delta$ is a norm of the extension $L_2F_0(\sqrt{v_1\pi})/L_2.$ Hence, $D_0 \otimes L_2$ is split. In particular $\mu_2$ is a reduced norm from $D_0 \otimes L_2$. 
Thus  $a_1, a_2, \mu_1$ and $\mu_2$ have the required properties.

Suppose $D_0 \neq (v_1\pi, v_2\delta)$.  Since $a_1  = w \in R_0^*$ a non-square and 
$\kappa$ is a finite field, every  unit in $R_0$ is a square in $L_1$.  Thus  $D_0 \otimes L_1$ is split. 
Hence $a_1, a_2,  \mu_1 = u_1\pi^{r_1}\delta^{s_1}$ and $\mu_2 = 1$ have the required properties. 

Suppose $\lambda$ is not a  square. Then $\lambda  = -u_1^2 \pi^{2r_1} \delta^{2s_1}$ and $-1 $ is  not a square $F_0$. 
 Since $\lambda$ is a reduced norm from, $-1$ is a reduced norm from $D_0$. Since $-1$ is not a square in  $F_0$,  
  it follows that $D_0 \neq (v_1\pi, v_2\delta)$.  Hence $D_0 \otimes L_1$ is split. 
 Since $a_1 =  w \in R^*$  and $\kappa$ is a finite field, the quaternion algebra $(-1, a_1)$ is  split over $F_0$.
  Hence,  there exists $\mu' \in  L_1 = F_0(\sqrt{a_1}) $ such that 
 $N_{L_1/F_0}(\mu') = -1$ (cf. \cite[Lemma 2.4]{bh}). 
 Then  $a_1$,  $\mu_1 = 
  \mu' u_1\pi^{r_1}\delta^{s_1}$,
and $\mu_2 = 1$  have the required properties.  
\end{proof}

\begin{lemma}
\label{2dim-unit} Suppose that  $d = w$.  Let $a_2 =   \pi + \delta$ and $a_3 = d a_2$. 
 There exist $a_1 = v\pi^{\epsilon_1} \delta^{\epsilon_2}
  \in F_0^* \setminus F_0^{*2}$  with $v\in R_0^*$
  and $\mu_i \in L_i = F_0[X]/(x^2 - a_i)$ such that 
  
i)  $\prod_{i =1}^3 N_{L_i/F_0}(\mu_i) = \lambda$

ii)  $\mu_i$ is a reduced from $ D \otimes _F FL_i$ for $i = 1, 2, 3$

iii)  $\mu_i \in L_i$ is a  unit  at $\pi$ and $\delta$ for $i =  2, 3$.   
\end{lemma}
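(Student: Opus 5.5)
We take the structure of $D$ from Proposition \ref{2dim-class}: since $d = w$, we have $D \simeq D_0 \otimes_{F_0} F$ with $D_0 = (\pi, \delta)$. The plan is to split $\lambda$ into a ``parameter part'' and a ``unit part''. The parameter part will be absorbed by $L_1$. The unit part will be absorbed by $L_2, L_3$, and over these two fields $D$ splits, so condition ii) is automatic for $i=2,3$.

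\emph{Step 1 ($L_2, L_3$ split $D$).} In $D_0$ choose $x, y$ with $x^2 = \pi$, $y^2 = \delta$ and $xy = -yx$. Then $(x+y)^2 = \pi + \delta = a_2$, so $F_0(\sqrt{a_2}) = L_2$ embeds as a maximal subfield of $D_0$, and $D_0 \otimes L_2$ is split. Since $a_3 = w a_2$ and $w = d$, the composita agree: $FL_3 = F(\sqrt{a_3}) = F(\sqrt{a_2}) = FL_2$. Hence $D \otimes_F FL_i$ is split for $i = 2, 3$, and every $\mu_i$ with $i = 2,3$ satisfies ii).

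\emph{Step 2 (parameter part via $L_1$).} Write $\lambda = u\pi^r\delta^s = u' \lambda_1$ with $\lambda_1 = (-1)^{r+s}\pi^r\delta^s$ and $u' = (-1)^{r+s} u \in R_0^*$. Since $-\pi = \mathrm{Nrd}(x)$ and $-\delta = \mathrm{Nrd}(y)$, $\lambda_1$ is a reduced norm from $D_0$.
\begin{itemize}
\item If $\lambda_1 \notin F_0^{*2}$, put $a_1 = -\lambda_1$ modulo squares, that is $a_1 = \pm\pi^{\epsilon_1}\delta^{\epsilon_2}$. Take $\mu_1 = \pi^{r_1}\delta^{s_1}\sqrt{a_1}$, adjusted by a sign or square so that $N_{L_1/F_0}(\mu_1) = \lambda_1$. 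That $\mu_1$ is a reduced norm from $D_0 \otimes L_1$, hence from $D\otimes_F FL_1$, follows as in Proposition \ref{2dim-square} from \cite[Lemma 6.2]{PPS}.
\item If $\lambda_1$ is a square, then $\epsilon_1 = \epsilon_2 = 0$. Take $a_1 = w$ and $\mu_1 = \pm\pi^{r_1}\delta^{s_1}$. Over $L_1 = F_0(\sqrt{w})$ every unit of $R_0$ is a square, so $D_0 \otimes L_1$ behaves as in the corresponding case of Proposition \ref{2dim-square}, and $\mu_1$ is a reduced norm there.
\end{itemize}
In either case condition i) on $a_1$ holds.

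\emph{Step 3 (unit part via $L_2, L_3$).} It remains to write $u' = N_{L_2/F_0}(\mu_2)\,N_{L_3/F_0}(\mu_3)$ with $\mu_2, \mu_3$ units at $\pi$ and $\delta$. Every unit of $R_0$ is a square in $R$, because $\kappa_0$ is finite and $w$ is a nonsquare unit. So $u'$ is a norm from $F(\sqrt{a_2})/F$, and Lemma \ref{biquad} gives $u' \in N(L_2^*)N(L_3^*)$. Equivalently, the form
\[
q = \langle 1, -a_2\rangle - u'\langle 1, -w a_2\rangle
\]
is isotropic. To get units, I would reduce modulo the prime $(\pi+\delta)$. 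There $q$ becomes $\langle 1, -u'\rangle$ plus $a_2\langle -1, u'w\rangle$, and one of the two residue forms $\langle 1,-\bar u'\rangle$, $\langle -1,\bar u' \bar w\rangle$ is isotropic since $\bar u'$ or $\bar u'\bar w$ is a square. Hensel's lemma, applied in the complete ring $R_0$, then lifts this to an explicit solution, with $\mu_2 = \alpha_1 + \alpha_2\sqrt{a_2}$ and $\mu_3 = \beta_1 + \beta_2\sqrt{wa_2}$ whose coordinates make the norms units.

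I expect the main obstacle to be this unit requirement iii), together with justifying the reduced-norm claim for $\mu_1$ when $a_1$ involves both $\pi$ and $\delta$. My fallback for iii) is simpler: choose $\mu_2 = \alpha \in R_0^*$ or $\mu_2 = 1$ directly, according to whether $u'$ or $u'w$ is a square in $R_0$. In the first case, $u' = \alpha^2 = N_{L_2/F_0}(\alpha)$ and we take $\mu_3 = 1$. In the second case, $u'w^{-1}$ is a square, so it remains to show that $w$ lies in $N(L_2)N(L_3)$ with unit preimages. For this I would use $w = N_{L_3/F_0}(\sqrt{wa_2})/N_{L_2/F_0}(\sqrt{a_2})$ up to sign, and note that $\sqrt{a_2}$ and $\sqrt{wa_2}$ are not units, so the sign and unit adjustment must be handled via $-1$ being a norm from the unramified extension of the finite residue field.
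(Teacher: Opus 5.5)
\textbf{Steps 1 and 2 are sound; the gap is in Step 3.} Step 1 is the same $x+y$ argument the paper uses in Proposition \ref{2dim-square}, together with the useful remark that $FL_3=F(\sqrt{wa_2})=FL_2$ because $w=d$. In Step 2 you do not even need \cite[Lemma 6.2]{PPS}: modulo squares $a_1$ is $\pi$, $\delta$ or $-\pi\delta$, that is $x^2$, $y^2$ or $(xy)^2$, so $L_1$ embeds in $D_0$ and splits it.

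Step 3 fails because of a misreading of iii). You read ``unit'' as a unit of the two-dimensional local rings $R_0[\sqrt{a_2}]$ and $R_0[\sqrt{wa_2}]$. Under that reading the statement is false when $u'\in wR_0^{*2}$. Indeed $a_2, wa_2\in(\pi,\delta)$, so a unit $\alpha+\beta\sqrt{a_2}$ has norm $\alpha^2-a_2\beta^2\equiv\alpha^2$ modulo $(\pi,\delta)$, and likewise for $wa_2$. Hence every product $N_{L_2/F_0}(\mu_2)N_{L_3/F_0}(\mu_3)$ of such units has square image in $\kappa_0$, whereas $\bar u'$ is $\bar w$ times a square. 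So neither your Hensel lift nor a sign adjustment via $-1$ can produce what you are aiming for, and as written Step 3 is not proved.

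\textbf{The fix.} What iii) actually requires is that $\mu_2,\mu_3$ be units at the height-one primes $(\pi)$ and $(\delta)$. This is how it is used later, as ``units along $\eta$'' in Proposition \ref{codim1-even-even}. Since $\pi+\delta$ is divisible by neither prime, $\sqrt{a_2}$ and $\sqrt{wa_2}$ are such units. Your fallback identity then holds exactly, with no sign: $N_{L_3/F_0}(\sqrt{wa_2})\,N_{L_2/F_0}(\sqrt{a_2})^{-1}=(-wa_2)/(-a_2)=w$. So if $u'=w\beta^2$, take $\mu_2=\sqrt{a_2}^{-1}$ and $\mu_3=\beta\sqrt{wa_2}$. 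This is precisely the paper's choice $\mu_2=\sqrt{a_2}^{-1}$, $\mu_3=u_2\sqrt{a_3}$ (with $u=wu_2^2$) in the case $\lambda\in F^{*2}\setminus F_0^{*2}$. Alternatively, Lemma \ref{biquad} followed by rescaling $\mu_2,\mu_3$ by $\pi^{\mp k}\delta^{\mp l}$ also works, because $(\pi)$ and $(\delta)$ are inert in $L_2$ and $L_3$.

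\textbf{Comparison with the paper.} With this fix, your proof differs from the paper's only in bookkeeping. When $(\epsilon_1,\epsilon_2)\neq(0,0)$, the paper puts all of $\lambda$, unit included, into $a_1=-u\pi^{\epsilon_1}\delta^{\epsilon_2}$. It then applies \cite[Lemma 6.2]{PPS} over $F$, where $-1$ is a square, so that $\mu_2=\mu_3=1$. Your splitting $\lambda=u'\lambda_1$ makes $\mu_1$ visibly a reduced norm, but needs the unit step in every case.
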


\begin{proof}
Since $F = F_0(\sqrt{d})$, every element in $R_0^*$ is a square in $R$. 
In particular $-1 \in R^{*2}$ and $\lambda =  u_1^2 \pi^r\delta^s$ for some $u_1\in R$ with $u_1^2 = u$. 
Further $D_0 = (\pi, \delta)$ (\ref{2dim-class}). 

Suppose     $  \lambda$  is  not  a square in $F^*$.  Then 
$\lambda = \pi^{\epsilon_1} \delta^{\epsilon_2} u_1^2\pi^{2r_1} \delta^{2s_1}$.
Since $-1 \in F^{*2}$,  both $\pm \lambda$ are not squares in 
$F^*$.  Hence, by \cite[Lemma 6.2]{PPS}, $a_1 = - u\pi^{\epsilon_1} \delta^{\epsilon_2}$, 
 $\mu_1 =  \pi^{r_1} \delta^{s_1} \sqrt{a_1} $ and  
$\mu_2 = \mu_3 = 1$ have the required properties.

Suppose that   $ \lambda$ is   a square in $F^*$. 
Then $r = 2r_1$ and $s = 2s_1$. 
Suppose  $\lambda$ is a  square in $F_0$. 
Then $u_1 \in R_0^*$. 
Since  $\pi$ and $\delta$ are reduced norms from $D_0$ and $D_0\otimes F_0(\sqrt{a_2})$ is split,    
  $ a_1 = w$,  $\mu_1 =  \pi^{r_1}\delta^{s_1}$,  $\mu_2 =  u_1$ and $ \mu_3 = 1$ have the required properties. 

Suppose that $\lambda \not\in F_0^{*2}$. Then   $u = wu_2^2$ for some  unit $u_2 \in R_0$.   
  Then $a_1 = w$,  $\mu_1 =  \pi^{r_1}\delta^{s_1}$, $\mu_2 = \sqrt{a_2}^{-1}$, 
  $\mu_3 =  u_2 \sqrt{a_3}$ have the required properties.  
\end{proof}

\begin{lemma}
\label{choice-at-P-pi} Suppose that  $d = \pi$. 
There exist $a_1 =v\pi^{\epsilon_1} \delta^{\epsilon_2} \in 
F_0^* \setminus F_0^{*2}$  with $v\in R_0^*$   and $\mu_1 \in L_1 = F_0[X]/(x^2 - a_1)$ and 
$\mu_2 \in  L_2 = F_0[X]/(x^2 - w)$  such that 

i)  $  N_{L_1/F_0}(\mu_1)  N_{L_2/F_0}(\mu_2)= \lambda$

ii)  $\mu_i$ is a reduced from from $ D \otimes _F FL_i$ for $i = 1, 2$

iii) $ \mu_2 \in R_0[X]/(X^2 - w)$ a unit. 
\end{lemma}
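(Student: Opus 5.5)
The plan is to reduce to the explicit shape of $D$ and then mimic the case analysis of (\ref{2dim-square}) and (\ref{2dim-unit}), using the unramified quadratic extension $L_2 = F_0(\sqrt{w})$ to absorb the unit part of $\lambda$. Since $d=\pi$, Proposition (\ref{2dim-class})(ii) gives $D \simeq D_0\otimes_{F_0} F$ with $D_0=(w,\delta)$. Two facts will be used repeatedly.

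First, $D\otimes_F FL_2 = (w,\delta)\otimes F(\sqrt{w})$ is split. Hence every element of $FL_2$ is a reduced norm from $D\otimes_F FL_2$, and condition ii) for $i=2$ is automatic.

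Second, since $R_0$ is complete with finite residue field $\kappa_0$ and $w$ is a nonsquare unit, $R_0[X]/(X^2-w)$ is the unramified quadratic extension. Every unit of $R_0$ is therefore the norm of a unit of $R_0[X]/(X^2-w)$: lift from the residue field, where the norm map of finite fields is surjective, and use Hensel's lemma. So for any unit $u'\in R_0^*$ we can pick $\mu_2$ a unit in $R_0[X]/(X^2-w)$ with $N_{L_2/F_0}(\mu_2)=u'$. This will give iii).

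Write $\lambda = u\pi^{\epsilon_1}\delta^{\epsilon_2}(\pi^{r_1}\delta^{s_1})^2$ as before. Over $F$ we have $\pi=(\sqrt{\pi})^2$, and $-1$ and $w$ are reduced norms from $D$: $(w,-1)$ is split because $Br(R)=Br(\kappa)=0$, and $w$ is a norm from $F(\sqrt{w})$, which is a maximal subfield of $D$. Also $-\delta$ is a reduced norm, being $Nrd$ of the pure quaternion $y$ with $y^2=\delta$. Hence $\pi$, $\delta$ and $\pi^{r_1}\delta^{s_1}$ are reduced norms from $D$, and so from $D\otimes FL_1$ for any $L_1$.

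The argument now splits on $(\epsilon_1,\epsilon_2)$.

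\emph{Case $\epsilon_1=\epsilon_2=0$.} Take $a_1=\pi$, which is allowed since it is a nonsquare of the form $v\pi^{\epsilon_1'}\delta^{\epsilon_2'}$. Set $\mu_1=\pi^{r_1}\delta^{s_1}\in F_0\subset L_1$, so that $N_{L_1/F_0}(\mu_1)=\pi^{2r_1}\delta^{2s_1}$. Choose $\mu_2$ with $N(\mu_2)=u$. Here $FL_1\simeq F\times F$, and $\mu_1$ is a reduced norm in each factor by the remarks above.

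\emph{Case $(\epsilon_1,\epsilon_2)\neq(0,0)$.} Take $a_1=-\pi^{\epsilon_1}\delta^{\epsilon_2}$ and $\mu_1=\pi^{r_1}\delta^{s_1}\sqrt{a_1}$, so $N_{L_1/F_0}(\mu_1)=\pi^{r}\delta^{s}$. Again choose $\mu_2$ with $N(\mu_2)=u$. What remains is to show that $\sqrt{a_1}$ is a reduced norm from $D\otimes_F FL_1$.
\begin{itemize}
\item If $\epsilon_1=0,\epsilon_2=1$: we have $D=(w,\delta)=(w,-\delta)$ since $(w,-1)=0$, and $FL_1=F(\sqrt{-\delta})$ splits it. So everything is a reduced norm.
\item If $\epsilon_2=1$ and $\epsilon_1=1$: in $F$, $a_1=-\pi\delta$ differs from $-\delta$ by a square, so this reduces to the previous subcase.
\item If $\epsilon_1=1,\epsilon_2=0$: then $a_1=-\pi$ and $FL_1=F(\sqrt{-1})$. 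This subcase needs a separate argument, described next.
\end{itemize}

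I expect the subcase $\epsilon_1=1,\epsilon_2=0$ to be the main obstacle. Here $\sqrt{a_1}=\sqrt{-1}\sqrt{\pi}$, and we must decide whether it is a reduced norm from $(w,\delta)\otimes F(\sqrt{-1})$. Since $\sqrt{\pi}$ is a regular parameter of $R$ and is not on the ramification divisor $(\delta)$, I would first try \cite[Lemma 6.2]{PPS} exactly as in the proof of (\ref{2dim-square}). If that does not apply directly, I would fall back on a residue argument: $\partial_\delta$ of $(w,\delta)$ is $\bar w$, and the element $\sqrt{-1}\sqrt{\pi}$ is a unit at $\delta$, so its class pairs trivially with $D$ in $H^3$. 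Combined with $Br$ of the complete local ring being trivial, as in (\ref{curve-points0-norms}), this should show the element is a reduced norm. Alternatively, if $-1$ is a square in $F_0$, then $FL_1=F\times F$ and the claim is immediate from $\sqrt{\pi}\in F$ being $Nrd$ of an element of the maximal subfield. If $-1$ is not a square in $F_0$, one can also replace $a_1$ by $-w\pi$, making $FL_1=F(\sqrt{-w})$ and using that $-1\in N(F_0(\sqrt w))$ to adjust $\mu_2$ accordingly.
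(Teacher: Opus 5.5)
Your overall framework is sound, and your subcases $(\epsilon_1,\epsilon_2)=(0,1)$ and $(1,1)$ are correct. Putting the unit $u$ into a unit $\mu_2$ of the unramified extension $R_0[X]/(X^2-w)$, over which $D=(w,\delta)\otimes F$ splits, gives ii) and iii) for $i=2$ for free.

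\emph{Main gap.} It is in the subcase you flagged, $\epsilon_1=1$, $\epsilon_2=0$, $a_1=-\pi$, when $-1\in F_0^{*2}$. A concrete instance is $\kappa_0=\mathbb{F}_5$, $D=(w,\delta)\otimes F$, $\lambda=\pi=Nrd(\sqrt{\pi})$. Then $FL_1\cong F\times F$ and $\sqrt{a_1}\mapsto(\pm\sqrt{-1}\sqrt{\pi})$. For nonsplit $D$, no element $c\sqrt{\pi}$ with $c\in R^*$ is a reduced norm from $D$. Indeed, the residue of $(w)\cup(\delta)\cup(c\sqrt{\pi})$ at the height one prime $(\sqrt{\pi})$ of $R$ is $(\bar w,\bar\delta)$. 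This is the nonsplit quaternion algebra over the local field $\mathrm{Frac}(R_0/(\pi))$, because $\bar w$ is a nonsquare unit and $\bar\delta$ a uniformizer there. Hence $\mu_1=\pi^{r_1}\delta^{s_1}\sqrt{a_1}$ is not a reduced norm from $D\otimes_F FL_1$.

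None of your fallbacks repairs this:
\begin{itemize}
\item The reduced norm of the central element $\sqrt{\pi}$ is $\pi$, not $\sqrt{\pi}$.
\item Your residue argument looks at $(\delta)$, while the obstruction sits at $(\sqrt{\pi})$.
\item Lemma 6.2 of \cite{PPS} is applied in the paper with $a_1\equiv-\lambda$ modulo squares. Your $a_1$ no longer has this form once $u$ has been moved into $\mu_2$.
\item The alternative $a_1=-w\pi$ for $-1\notin F_0^{*2}$ fails for the same reason: then $-w\in F_0^{*2}$, so $FL_1\cong F\times F$ again. In that case $a_1=-\pi$ already works, because $F(\sqrt{-1})=F(\sqrt{w})$ splits $D$.
\end{itemize}
The missing step is to take $a_1=w\pi$ and $\mu_1=\pi^{r_1}\delta^{s_1}\sqrt{w\pi}$, so that $FL_1=F(\sqrt{w})$ splits $D$. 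Then choose the unit $\mu_2$ with $N_{L_2/F_0}(\mu_2)=-w^{-1}u$. This is exactly the paper's choice in its last case.

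\emph{Second, smaller problem.} In the case $\epsilon_1=\epsilon_2=0$, your $a_1=\pi$ does not have the shape the statement requires. There $\epsilon_1,\epsilon_2$ are the parities of $r$ and $s$ fixed before (\ref{2dim-square}), not free exponents, so $a_1$ must be a unit in this case. This matters later. In (\ref{codim1-even-even}) and (\ref{codim1-d=pi}), the parity of the $\pi_\eta$-exponent of $a_{1P}$ is read off from $\nu_\eta(\lambda)$. That is what lets a single $a_{1\eta}$ match every $a_{1P}$ with $P$ on $\overline{\{\eta\}}$. Take $a_1=w$ instead; then $FL_1=F(\sqrt{w})$ splits $D$ and $\mu_1=\pi^{r_1}\delta^{s_1}$ works.

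With both repairs your route becomes uniform and somewhat simpler than the paper's. Choose $a_1=w,\ w\pi,\ -\delta,\ -\pi\delta$ according to $(\epsilon_1,\epsilon_2)=(0,0),(1,0),(0,1),(1,1)$, so that $D\otimes_F FL_1$ is always split, and let $\mu_2$ absorb the leftover unit. This avoids both the paper's case split on whether $\pm\lambda$ are squares in $F$ and the appeal to \cite{PPS}.
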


\begin{proof} Since $d= \pi$, we have $D_0  = (w, \delta)$ (\ref{2dim-class}). 

Suppose   both  $\pm \lambda$  are   not  squares  in $F$. 
Then  $a_1 = -u\pi^{\epsilon_1} \delta^{\epsilon_2} $, $\mu_1 = \pi^{r_1} \delta^{s_1} \sqrt{a_1} $ and  
$\mu_2  =  1$ have the required properties (\cite[Lemma 6.2]{PPS}).

Suppose that  only one  of $\pm \lambda$ is   a square in $F$. 
Then  $-1 \not\in F_0^{*2}$ and $\lambda = \pm 1  \in F_0^*/F_0^{*2}$ or $ \lambda = \pm \pi \in F_0^*/F_0^{*2}$.
Further   $D_0  = (-1, \delta)$. 

Suppose  $\lambda = \pm 1 \in F_0^*/F_0^{*2}$.  Then $\lambda = \pm u_1^2 \pi^{2r_1} \delta^{2s_1}$ 
for some $u_1 \in R_0^*$. 
Let $a_1 = -1$  
Since $\kappa$ is a finite field, there exists 
$\mu'_1 \in L_1 = F_0(\sqrt{a_1})$ with $N_{L_1/F_0}(\mu_1)= \pm 1$.
Then $a_1$, $\mu_1 = u_1\pi^{r_1}\delta^{s_1} \mu_1'$, 
$\mu_2 =   1$ have the required properties.   

 Suppose  $\lambda = \pm \pi \in F_0^*/F_0^{*2}$. 
 Then $\epsilon_1 = 1$, $\epsilon_2 = 0$ and 
 $\lambda = \epsilon \pi  u_1^2 \pi^{2r_1} \delta^{2s_1}$ 
for some $u_1 \in R_0^*$ and $\epsilon = \pm 1$. 
 Then  $a_1 = -  \pi$, 
 $\mu_1 =  u_1\pi^{r_1}\delta^{s_1}\sqrt{-  \pi}$ and  $\mu_2 \in L_2 = F_0(\sqrt{-1})$ with 
 $N_{L_2/F_0}(\mu_2) =  \epsilon$  
 have the required properties.

Suppose both  $\pm \lambda$ are  squares in $F$. 
Then $-1 \in F^{*2}$. Since $d = \pi$, $-1 \in F_0^{*2}$. 

Suppose  $\lambda$ is a  square in $F_0$. 
Then  $a_1 = w$, $\mu_1 = \sqrt{\lambda}$ and  $\mu_2   = 1$ have the required properties.

Suppose that $\lambda$ is not a square in $ F_0^ *$. Then  
  $d\lambda \in F_0^{*2}$  and hence $\lambda = \pi  u_1^2 \pi^{2r_1} \delta^{2s_1}$ 
for some $u_1 \in R_0^*$.
  Then $a_1 =  w\pi$, $\mu_1 =  u_1\pi^{r_1}\delta^{s_1}\sqrt{w\pi}$ and $\mu_2 \in L_2$ with 
  $N_{L_2/F_0}(\mu_2)  = -w^{-1}$ 
  have the required properties. 
\end{proof}

\begin{lemma}
\label{curve-point}
Suppose $d$ is not a square and  $D$ is ramified on $R$ at most at $\pi$. 
Then $D$ is split. 
\end{lemma}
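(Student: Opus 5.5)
We work in the setting just introduced: $R_0$ is a complete two dimensional regular local ring with finite residue field $\kappa_0$, $d = w$ or $\pi$, and $R$ is the integral closure of $R_0$ in $F$, a complete regular local ring with maximal ideal $(\pi',\delta)$. The plan is to pin down $D$ explicitly from its ramification and then use the existence of the $F/F_0$-involution, via $\operatorname{cores}_{F/F_0}(D)=0$, to force $D$ to be trivial. This follows the pattern of the proof of (\ref{2dim-class}).

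First, since $D$ has period $2$ and is ramified on $R$ at most at $(\pi')$, the Saltman-type decomposition used in (\ref{2dim-quat}) gives $D = D' + (u,\pi')$ with $D'$ unramified on $R$ and $u\in R^*$. The term in $\delta$ is absent because there is no ramification at $(\delta)$. Since $Br(R)\cong Br(\kappa)=0$ for the finite residue field, $D'=0$. Hence $D=(u,\pi')$ for a unit $u\in R^*$, and we may assume $D$ is nonsplit, aiming for a contradiction.

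Next we split into the two cases for $d$.

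\emph{Case $d=w$.} Here $\pi'=\pi\in F_0$, and every unit of $R_0$ becomes a square in $R$, because the residue field of $R$ is the quadratic extension of $\kappa_0$ and $R$ is complete. We have
$$\operatorname{cores}_{F/F_0}(u,\pi) = (N_{F/F_0}(u),\pi) = 0 .$$
Taking the residue at $\pi$, the image of $N_{F/F_0}(u)$ is a square in $\kappa(\pi)$, and completeness of $R_0$ lifts this to $N_{F/F_0}(u)=v^2$ with $v\in R_0^*$. Then $N_{F/F_0}(uv^{-1})=1$. By Hilbert 90 on units, exactly as in (\ref{2dim-class}), $uv^{-1}=\theta\bar\theta^{-1}$, so $u$ is a square times an element of $R_0^*$. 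Such an element is a square in $R$, so $D$ is split.

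\emph{Case $d=\pi$.} Here $\pi'=\sqrt{\pi}$ and the residue fields of $R$ and $R_0$ agree, so we may take $u\in R_0^*$. Then
$$\operatorname{cores}_{F/F_0}(u,\sqrt{\pi}) = (u, N_{F/F_0}(\sqrt{\pi})) = (u,-\pi) = 0 .$$
Taking the residue at $(\pi)$ shows $\bar u$ is a square in $\kappa(\pi)$. Since $R_0$ is complete, $u$ is a square in $R_0$, and again $D$ is split.

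The main obstacle is the first step: justifying that ramification only along $(\pi')$ yields the clean form $(u,\pi')$, with the unramified part vanishing. Here I would invoke \cite[Theorem 2.1]{salt} together with $Br(R)=0$, as in (\ref{2dim-quat}). A secondary point is the residue argument in the case $d=w$. Because $\pi$ is a regular parameter and $N_{F/F_0}(u)$ is a unit, the vanishing of $(N(u),\pi)$ forces $\overline{N(u)}$ to be a square in $\kappa(\pi)=\operatorname{Frac}(R_0/\pi)$. Since $\overline{N(u)}$ is a unit of the complete ring $R_0/\pi$, it is then a square of a unit there, and Hensel's lemma lifts it to $R_0$.
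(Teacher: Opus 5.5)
Your argument is correct. It uses the same ingredients as the paper: Saltman's decomposition over a ring with finite residue field as in Lemma~\ref{2dim-quat}, $\operatorname{cores}_{F/F_0}(D)=0$, residues, and Hensel's lemma. The difference is the order in which they are applied.

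\textbf{The paper's route.} It simply invokes Proposition~\ref{2dim-class}. The involution forces a nonsplit $D$ to be $(\pi,\delta)\otimes F$ when $d=w$, and $(w,\delta)\otimes F$ when $d=\pi$. Both are ramified along $(\delta)$; for $d=\pi$ this is because $w$ stays a non-square in $R$, the residue fields being equal. That contradicts the hypothesis.

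\textbf{Your route.} You use the ramification hypothesis first, to cut Saltman's list down to $D=(u,\pi')$: the residue at $\delta$ kills a $(v,\delta)$ summand and rules out the shape $(u\delta,v\pi')$. Only then do you use $\operatorname{cores}_{F/F_0}(D)=0$ to show $u\in F^{*2}$. This is exactly the sub-case of the proof of Proposition~\ref{2dim-class} dealing with $(u,\pi)$, resp.\ $(u,\sqrt{\pi})$.

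\textbf{Comparison.} The paper's route is shorter once the classification is available. Yours is self-contained. It also spells out a step that the proof of Proposition~\ref{2dim-class} compresses into ``$N_{F/F_0}(u)=1$''. The residue at $\pi$ only gives $N_{F/F_0}(u)=v^2$ with $v\in R_0^*$, after which Hilbert~90 is applied to $uv^{-1}$.

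You are also right to use the $F/F_0$-involution explicitly. It is absent from the statement of the lemma; the paper imports it through Proposition~\ref{2dim-class}. It is nonetheless indispensable. For $d=w$, take a quaternion algebra $(u,\pi)$ with $u\in R^*$ whose residue is a non-square in the residue field of $R$. It is nonsplit and ramified only along $(\pi)$.
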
 

\begin{proof}  Suppose $d = w$.
Suppose  $D$ is non split.
Then, by (\ref{2dim-class}), 
$D \simeq (\pi, \delta) \otimes F$.  Then $D$ is ramified both at $(\pi)$ and 
$(\delta)$. This contradicts the assumption that $D$ is  ramified at most  at $\pi$. 
Hence $D$ is split. 

Suppose $d = \pi$. 
Suppose  $D$ is non split.
Then, by (\ref{2dim-class}), 
$D \simeq (w, \delta) \otimes F$ for unit $w \in R_0$ which is not a square. 
Since $F/F_0$ is ramified, $w\in R$ is not a square. In particular 
$D$ is ramified at $\delta$.  This contradicts the assumption that $D$ is  ramified at most  at $\pi$. 
Hence $D$ is split. 
\end{proof}

We end this section with the following. 

\begin{prop} 
\label{curve-point-choice0}  Suppose that $D$ is ramified on $R$ at most at $\pi$. 
Let $n \geq 1$. 
Suppose there exist $a_{i\pi} \in F_{0\pi}$ and 
$\mu_{i\pi} \in L_{i\pi} = F_{0\pi}[X]/(X^2 - a_{i\pi})$  for $1 \leq i \leq n$  such that 

i) $\prod_i N_{L_{i\pi}/F_{0\pi}}(\mu_{i\pi}) = \lambda$

ii) $\mu_{i\pi}$ is a reduced norm from $D\otimes L_{i\pi}$  for $1 \leq i \leq n$.

Then there exist $a_{i} \in F_{0}$ and $\mu_{i} \in L_{i} = F_{0}[X]/(X^2 - a_{i})$ for 
$1 \leq i \leq n$
such that 
i) $\prod_i N_{L_{i}/F_{0}}(\mu_{i}) = \lambda$

ii) $\mu_{i}$ is a reduced norm from $D\otimes L_{i}$  for $1 \leq i \leq n$

iii) $a_{i\pi}a_{i} \in F_{0, \pi}^{*2}$ for $1 \leq i \leq n$

iv)  there is an isomorphism 
$$\phi_i : L_{i\pi} \simeq L_{i} \otimes F_\pi$$ such that 
$\phi_i(\mu_{i\pi})^{-1} \mu_{i}  \in ( L_i \otimes F_{0\pi})^{2^m}$  for all $m \geq 1$ and
 $1 \leq i \leq n$. 
\end{prop}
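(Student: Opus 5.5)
By Lemma \ref{curve-point}, the hypothesis that $D$ is ramified on $R$ at most at $\pi$ forces $D$ to be split, provided $d$ is not a square in $F_0$. The plan is to reduce the statement to a pure approximation problem for norms, which Lemma \ref{curve-points0} already solves.

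First I dispose of the case where $d$ is a square in $F_0$. Then $F = F_0 \times F_0$. The unitary involution forces $D$ to be of the form $D_1 \times D_1^{op}$. In that situation, being a reduced norm from $D\otimes L_i$ just means being a reduced norm from $D_1 \otimes L_i$. Here I would argue that $D_1$, which is unramified on $R_0$ except possibly at $(\pi)$ over a finite residue field, is split by the same residue argument as in Lemma \ref{curve-point}. This is a routine check using Lemma \ref{2dim-quat}: a non-split class would be ramified at $\delta$ as well.

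So in all cases $D$ is split, and then $D \otimes L_i$ and $D \otimes L_{i\pi}$ are split. Hence every element of $L_i$ is a reduced norm from $D\otimes L_i$. Condition ii) of the conclusion then holds automatically for any choice of $\mu_i$, and the hypothesis ii) becomes vacuous.

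Next, $\lambda = u\pi^r\delta^s$ with $u \in R_0^*$, as fixed earlier in this section. The data $a_{i\pi}$ and $\mu_{i\pi}$ satisfy $\prod_i N_{L_{i\pi}/F_{0\pi}}(\mu_{i\pi}) = \lambda$. I then apply Lemma \ref{curve-points0} over the complete regular local ring $R_0$, with the same $n$. This produces $a_i = u_i\pi^{\epsilon_i}\delta^{\epsilon_i'}$ and $\mu_i \in F_0[X]/(X^2-a_i)$ with the following properties:
\begin{itemize}
\item $a_{i\pi}a_i^{-1} \in F_{0\pi}^{*2}$, which is equivalent to $a_{i\pi}a_i \in F_{0\pi}^{*2}$ and gives iii);
\item $\prod_i N_{L_i/F_0}(\mu_i) = \lambda$, which gives i);
\item there are isomorphisms $\phi_i$ with $\phi_i(\mu_{i\pi})\mu_i^{-1}$ a $2^m$-th power for all $m$, which gives iv) after inverting.
\end{itemize}

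The only point needing care is the last property. Lemma \ref{branch-elements} only gives $\mu_{i\pi}\mu_i^{-1} \in F_\pi(\sqrt{a_i})^{*2}$, and one must upgrade this to $2^m$-th powers for all $m$. I would handle this by strengthening the choice in Lemma \ref{branch-elements}: choose the unit $w$ so that $\mu_{i\pi}\mu_i^{-1}$ is a principal unit, i.e.\ a unit with residue $1$, in the complete discretely valued field $L_i\otimes F_{0\pi}$. Since the residue characteristic is not $2$, Hensel's lemma makes principal units infinitely $2$-divisible.

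This is done in two steps:
\begin{itemize}
\item Match the parameter exponents exactly rather than modulo $2$, which is possible since $\pi,\delta,\sqrt{a_i}$ are available.
\item Approximate the residue of the unit part. In the complete discrete valuation ring $R_0/(\pi)$, the residue has the form $\bar v\bar\delta^s$, so it is hit exactly by a unit of $R'$ times a power of $\delta$, as in the proof of Lemma \ref{branch-elements}.
\end{itemize}

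The final correction by $\theta$ in Lemma \ref{curve-points0} must then also be arranged to preserve this exactness. I would do so by adjusting the residue of $w_1$ instead of multiplying by an arbitrary square. This refinement of the squares to $2^m$-th powers is the main obstacle; the rest is bookkeeping.
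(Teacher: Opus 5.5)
Your proof has a genuine gap in the case $d\in F_0^{*2}$. There $F=F_0\times F_0$ and $D\cong D_1\times D_1^{op}$, but $D_1$ need not be split. Take $D_1=(v,\pi)$ with $v\in R_0^*$ whose image in $\kappa_0$ is a non-square:
\begin{itemize}
\item at every height-one prime of $R_0$ other than $(\pi)$, both $v$ and $\pi$ are units, so $D_1$ is unramified there;
\item $\partial_\pi(D_1)$ is the class of $\bar v$, a non-square unit in the complete discretely valued field $\kappa(\pi)=\mathrm{Frac}(R_0/(\pi))$, whose residue field is $\kappa_0$.
\end{itemize}
So $D_1$ is a non-split quaternion algebra ramified only along $(\pi)$. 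It is the first entry of the list in Lemma \ref{2dim-quat}, and it is \emph{not} ramified at $(\delta)$. The argument of Lemma \ref{curve-point} does not carry over, because it rests on Proposition \ref{2dim-class}, which uses $\mathrm{cores}_{F/F_0}(D)=0$ for a genuine quadratic field extension. When $F=F_0\times F_0$, the unitary involution only forces the shape $D_1\times D_1^{op}$ and puts no restriction on $D_1$. This case can arise where the proposition is applied in Proposition \ref{curve-point-choice}: a closed point off the nodal set, on a component of $X_0$ along which $D$ ramifies, at which $d$ is a unit square. Consequently conclusion ii) is not automatic, and hypothesis ii) is not vacuous. For example, $\delta$ is not a reduced norm from $(v,\pi)$, since the residue at $(\pi)$ of $(\delta)\cup(v)\cup(\pi)$ is, up to sign, the non-split class $(\bar v,\bar\delta)$ over the local field $\kappa(\pi)$.

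The missing step is the one the paper uses in this case. Since $\mu_i$ differs from $\phi_i(\mu_{i\pi})$ by a square in $L_i\otimes F_{0\pi}$, hypothesis ii) shows that $\mu_i$ is a reduced norm from $D_1\otimes L_i\otimes F_{0\pi}$. Lemma \ref{curve-points0} produces $a_i=u_i\pi^{\epsilon_i}\delta^{\epsilon_i'}$ and $\mu_i=w_i\pi^{r_i}\delta^{s_i}\sqrt{a_i}^{s_i'}$ with $w_i$ a unit, which is exactly the shape needed for Lemma \ref{curve-points0-norms}. That lemma is a local--global statement for reduced norms from quaternion algebras unramified on $R_0$ outside $(\pi),(\delta)$, with finite residue field. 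It upgrades the local statement to $\mu_i$ being a reduced norm from $D_1\otimes L_i$, hence from $D\otimes L_i$; apply it componentwise when $L_i$ is split.

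Your refinement of the $2^m$-th power condition is sound and compatible with this step. Making each $\mu_{i\pi}\mu_i^{-1}$ a principal unit forces $\lambda^{-1}\prod_i N(\mu_i)$ to be a unit of $R_0$ congruent to $1$ modulo $\pi$. Its square root $\theta\equiv 1 \pmod{\pi}$ can then be divided out of $\mu_1$ without changing the shape of $\mu_1$ or its being a principal-unit multiple of $\mu_{1\pi}$. This is more careful than the paper: Lemma \ref{curve-points0} asserts $2^m$-th powers, but its proof via Lemma \ref{branch-elements} only yields squares. Still, without the local--global step for reduced norms, the case $d\in F_0^{*2}$ remains unproved.
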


\begin{proof} Apply   (\ref{curve-points0}) to  $R_0$, $F_0$ and $a_{i\pi}$ with $a = 1$,
and get $a_i \in F_0$ as in  (\ref{curve-points0}).   Apply  once again (\ref{curve-points0})   to
 $R_0$,  $F_0$,  $a_i$ and 
$\mu_{i\pi}$,  get $\mu_i \in  L_i = F_0[X]/(X^2 - a_i)$ as in   ( \ref{curve-points0}). 

Suppose $d$ is not a square in $F_0$.  Then, by (\ref{curve-point}), $D$ is split and hence 
$\mu_i$ are reduced norms from $D$.

Suppose $d$ is a square in $F_0$.  Then $F = F_0 \times F_0$ and $D = 
D_0 \otimes F = D_0 \times D_0$. 
Since $\mu_{i\pi}$ are reduced norms from 
$D\otimes_F FL_i   =  D_0 \otimes L_i   \times D_0 \otimes L_i$.
Hence $\mu_{i\pi}$ are reduced norms from $D_0 \otimes L_i$ and 
by (\ref{curve-points0-norms}), $\mu_i$ are reduced norms from $D\otimes L_i$. 
\end{proof}

\section{Choices at nodal points}\label{section6} 
\label{nodalpoints}

Let  $p \geq  3$ be  a prime and  $K$ be a  $p$-adic field.
Let $F_0$ be the function field of a curve over $K$ and 
$F = F_0(\sqrt{d})$ a quadratic field extension.  
Let $D$ be a central division algebra over $F$ with a $F/F_0$-involution. 
Let $\lambda \in F_0^* \cap Nrd(D)^*$. 

Let $T$ be the  valuation ring of $K$ and $k$ the residue field of $K$.
Let $\XX_0$ be a regular proper model of $F_0$ over $T$ with  the union of the 
ramification locus of $D$, support of $d$, support of $\lambda$
 and the closed fibre $X_0$ of $\XX_0$   is a union of regular 
curves with  normal crossings. Further, the integral closure $\XX$
of $\XX_0$ in $F$ is a regular proper model of $F$ (cf. \cite[Theorem 11.2]{LocalglobalPF}, or \cite{alma}). 
Let $\DD$ be the set of codimension one points of $\XX_0$ consisting of 
support of $d$, support of $\lambda$, the closed fibre $X_0$ and 
the ramification locus of $D$ on $\XX_0$.   Let $P \in \XX_0$ be a
closed point. Then, by the choice of  $\XX_0$, there exist at most two codimension one 
points of $\XX_0$ which are in $\DD$ and pass through $P$.
Further, since  
$\XX$ is regular,  there exists at most one codimension one point $\eta$ of $\XX_0$ 
passing through $P$ such that 
$\nu_\eta(d)$ is odd.

Let $P \in \XX_0$ be a closed point. Let  $\hat{R}_{0P}$ be the  completion of the 
local  ring at 
$P$ on $\XX_0$,  $m_P$ the maximal ideal $\hat{R}_{0P}$, $F_{0P}$ the field of fractions of 
$\hat{R}_{0P}$ and $F_P = F_{0P} \otimes F$. 
Let $w_P \in \hat{R}_{0P}$ be a unit which is not 
a square in $\hat{R}_{0P}$. 
Since the residue field $\kappa(P)$ at $P$ is a finite field, any unit in $\hat{R}_{0P}$ is a square or 
$w_P$ times a square.

Let $\PP_0$ be the  finite set of closed points of $\XX_0$  consisting of  the  points of
intersection of   two distinct  codimension  one   points  in   $\DD$.  

Let $P \in \PP_0$ and  $\eta_1, \eta_2 \in \DD$  such that 
$P \in  \overline{\{ \eta_1 \}} \cap \overline{\{ \eta_2 \}}$. 
Then  $m_P = (\pi_P, \delta_P)$ with  $\eta_1$ and $\eta_2$ are given by  primes 
$\pi_P$ and $\delta_P$ respectively at $P$, 
$d =  d_P^2$ or $d = w_Pd_P^2$ or $d = w_P\pi_Pd_P^2$  or $d = w_P\delta_Pd_P^2$  and 
$\lambda = u_P\pi_P^{r} \delta_P^{s}$
for some $u_P  \in \hat{R}_{0P}$ units, $d_P \in F_{0P}^*$, 
$r = \nu_{\eta_1}(\lambda)$, $s  =  \nu_{\eta_2}(\lambda)$  and $D$ is unramified 
at $P$ except possibly at $(\pi_P)$ and $(\delta_P)$. 
Let $\epsilon_1, \epsilon_2 \in \{ 0, 1\}$ and $r_1, s_1 \in {\mathbb Z }$ be such that
$r = 2r_1 + \epsilon_1$ and $s = 2s_1 + \epsilon_2$. Then 
$\lambda   = u_P\pi_P^{\epsilon_1} \delta_P^{\epsilon_2} (\pi_P^{r_1}\delta_P^{s_1})^2$.

 Suppose that the period of $D$ is 2.  Then ind$(D) \leq 4$  (cf. \cite{salt}) and 
ind$(D\otimes F_{0P}) \leq 2$ (cf. \ref{2dim-quat} ) for all closed points $P$ of $\XX_0$.
Since $D$ has a $F/F_0$-involution, for every  closed point $P \in \XX_0$,  
 there exists a central  division  algebra $D_{0P}$ over $F_{0P}$ such that 
$D\otimes F_{0P} = D_{0P}\otimes F_P$ (cf. \cite[Proposition 2.22]{knus}) and $D_{0P}$ is unramified at $P$ except possibly at 
$\eta_1$ and $\eta_2$ (cf. \ref{2dim-class} ). Further if  $D\otimes F_{0P}$ is a split algebra,  we choose
$D_{0P} = F_{0P}$ and 
if $D\otimes F_{0P}$ is not a split algebra,   $D_{0P}$ be as in (\ref{2dim-class}).

\begin{prop} 
\label{closed-d-unit}  Suppose $\nu_{\eta_1}(d)$ and $\nu_{\eta_2}(d)$ are even.  
Then there exist $a_{iP}$, $\mu_{iP}$, $i = 1, 2,  3$  such that 

i) $a_{1P} =    v_P \pi_P^{\epsilon_1}\delta_P^{\epsilon_2} \in  F_{0P} \setminus F_{0P}^{*2}$, 
 $v_P$ a unit at $P$, 
 $\mu_{1P} \in L_{1P} =  F_{0P}[X]/(X^2 - a_{1P})$
 
ii)  $a_{2P} \in \hat{R}_{0P}$  a unit at $\eta_1$ and $\eta_2$ and 
$\partial_{\eta_i}(D_{0P}) = \bar{a}_{2P} \in \kappa(\eta_i)^*/\kappa(\eta_i)^{*2} $ for $i = 1, 2$

iii) $a_{3P} = d a_{2P}$,
 
 iv) $\mu_{iP} \in F_{0P}[X]/(X^2 - a_{iP})^*$  unit along $\pi$ and $\delta$ for $i = 2, 3$
 
 v) $\prod_i N_{L_{iP}/F_{0P}}(\mu_{iP}) = \lambda$, where $L_{iP} = F_0[X]/(X^2 - a_i)$ for $i = 1, 2, 3$ 
 
 vi) $\mu_{iP}$ is a reduced norm from $D \otimes L_{iP}$ for $i  =1, 2 , 3$
 \end{prop}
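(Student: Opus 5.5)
Since $\nu_{\eta_1}(d)$ and $\nu_{\eta_2}(d)$ are even, locally at $P$ we have $d = d_P^2$ or $d = w_Pd_P^2$. I will split into these two cases and in each reduce to the two-dimensional results of Section \ref{section5}, applied to $R_0 = \hat{R}_{0P}$ with $\pi = \pi_P$, $\delta = \delta_P$ and the finite residue field $\kappa(P)$. Throughout, $D\otimes F_{0P} = D_{0P}\otimes F_P$ with $D_{0P}$ either split or as in (\ref{2dim-class}). If $D_{0P}$ is split, every element is a reduced norm. Then I take $a_{1P} = -u_P\pi_P^{\epsilon_1}\delta_P^{\epsilon_2}$ (or $w_P$ when this is a square), $\mu_{1P}$ as in the proof of (\ref{2dim-square}), any admissible $a_{2P}$ (e.g. $\pi_P+\delta_P$ or a unit, since condition ii) is vacuous), $a_{3P} = da_{2P}$, and $\mu_{2P}, \mu_{3P}$ units with the right norms. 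So the real work is when $D_{0P}$ is a quaternion division algebra.

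\emph{Case $d = w_Pd_P^2$.} Here $F_P = F_{0P}(\sqrt{w_P})$. By (\ref{2dim-class}), $D_{0P} = (\pi_P, \delta_P)$, so $\partial_{\eta_1}(D_{0P}) = \bar\delta_P$ and $\partial_{\eta_2}(D_{0P}) = \bar\pi_P$. These agree, up to squares, with the residues of $a_{2P} = \pi_P + \delta_P$ on $\kappa(\eta_1)$ and $\kappa(\eta_2)$ respectively. This is exactly the choice in (\ref{2dim-unit}), so ii) holds with $a_{2P} = \pi_P+\delta_P$. Then (\ref{2dim-unit}) directly supplies $a_{1P}$ of the form in i), $a_{3P} = da_{2P}$, and $\mu_{iP}$ satisfying iv), v) and vi), with reduced norms taken from $D\otimes_F FL_{iP} = D\otimes L_{iP}$. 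Replacing $d$ by $w_P$ changes nothing, since $d_P^2$ is a square and $L_{3P}$ is unchanged up to isomorphism.

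\emph{Case $d = d_P^2$.} Here $F_P = F_{0P}\times F_{0P}$ and $D\otimes F_{0P} = D_{0P}\times D_{0P}^{op}$ (or $D_{0P}\times D_{0P}$), so being a reduced norm from $D\otimes L_{iP}$ is the same as being a reduced norm from $D_{0P}\otimes L_{iP}$. The element $\lambda\in F_0$ is a reduced norm from $D$, hence from $D_{0P}$. I will apply (\ref{2dim-square}) to $D_{0P}$ and $\lambda$. This gives $a_{1P}, a_{2P}, \mu_{1P}, \mu_{2P}$ satisfying i), ii), iv) and vi) for $i=1,2$, and $N(\mu_{1P})N(\mu_{2P}) = \lambda$. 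Next, $a_{3P} = da_{2P}$ is $a_{2P}$ times a square, so $L_{3P}\cong L_{2P}$ and I set $\mu_{3P} = 1$, which is trivially a unit and a reduced norm. Then v) holds.

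The main obstacle I expect is the unit condition iv) for $\mu_{2P}$ in (\ref{2dim-square}). That result only guarantees $\mu_2$ is a unit in $R_0[X]/(X^2-a_2)$, which must be checked to mean a unit along $\pi_P$ and $\delta_P$ in $F_{0P}[X]/(X^2-a_{2P})$. This should follow because $a_{2P}$ is a unit at $\eta_1,\eta_2$, so the primes over $\pi_P,\delta_P$ in that extension are unramified. A second point to check is that when $D_{0P}$ is split, the choices still make ii) meaningful; here the residues are trivial, so any unit $a_{2P}$ that is a square modulo $\pi_P$ and $\delta_P$, e.g. $a_{2P}=1$ treated as the split algebra $F_{0P}\times F_{0P}$, works.
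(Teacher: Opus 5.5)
Your proposal follows the paper's proof: the split case is handled directly, the case $d = w_Pd_P^2$ reduces via (\ref{2dim-class}) to $D_{0P}=(\pi_P,\delta_P)$ and then (\ref{2dim-unit}), and the case $d=d_P^2$ reduces to (\ref{2dim-square}) applied to $D_{0P}$, with $a_{3P}=da_{2P}$ and $\mu_{3P}=1$. The one slip is your early claim that ii) is vacuous in the split case. It actually forces $\bar a_{2P}$ to be a square in both $\kappa(\eta_i)$, so $\pi_P+\delta_P$ (whose residue at $\eta_1$ is $\bar\delta_P$) and non-square units both fail. You therefore need $a_{2P}=1$, as you conclude at the end. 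The paper does exactly this: it takes any non-square $a_{1P}=v_P\pi_P^{\epsilon_1}\delta_P^{\epsilon_2}$ with $\mu_{1P}=\pi_P^{r_1}\delta_P^{s_1}\sqrt{a_{1P}}$, and absorbs the unit discrepancy with $\mu_{2P}=(-v_P^{-1}u_P,1)\in F_{0P}\times F_{0P}$.
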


\begin{proof}  Suppose $D\otimes F_{0P}$ is a split algebra.
Let $v_P$ a unit at $P$ such that $a_{1P} = v_P 
\pi_P^{\epsilon_1}\delta_P^{\epsilon_2} \in  F_{0P} \setminus F_{0P}^{*2}$. 
Then    
  $\mu_{1P} = \pi_P^{r_1}\delta_P^{s_1} \sqrt{a_{1P}}$, 
   $a_{2P} = 1$,  $\mu_{2P}  = (-v_P^{-1}u_P, 1) 
   = F_{0P} \times F_{OP} =  F_{0P}[X]/(X^2 - 1)$ and 
   $  \mu_{3P} = 1$   have the required properties.

Suppose that $D\otimes F_{0P}$ is not a split algebra. 
Suppose $d$ is not a square in $F_{0P}$.  Since 
$\nu_{\eta_1}(d)$ and $\nu_{\eta_2}(d)$ are even,  
$d = w_Pd_1^2$ for some $d_1 \in F_{0P}^*$. Then, by (\ref{2dim-class}), 
$D_{0P} = (\pi_P, \delta_P)$.    Then $a_{1P}, a_{2P}, \mu_{iP}$ as in (\ref{2dim-unit})
have the required properties. 

Suppose $d$ is a square in $F_{0P}$. Then $F \otimes F_{0P} = F_{0P} \times F_{0P}$
and $D\otimes F_{0P} = D_{0P} \times D_{0P}^{op}$ for some quaternion algebra $D_{0P}$ over 
$F_{0P}$.  Further $D_{0P}$ is unramified at $P$ except possibly at $(\pi_P)$ and $(\delta_P)$. 
Let $a_{1P}$, $a_{2P}$ and $\mu_i \in L_{iP}$ be as in (\ref{2dim-square}). Let  $\mu_{3P} = 1$. 
Then $a_{1P}$, $a_{2P}$ and $\mu_{iP}$, $i = 1, 2, 3$  have the required properties.  
\end{proof}

\begin{prop}  
\label{closed-d=pi}
Suppose $\nu_{\eta_1}(d)$ is odd. 
Then  there exist $a_{1P}$, $a_{2P}$, $\mu_{1P}$, $\mu_{2P}$  
such that 

i) $a_{1P} =    v_P \pi_P^{\epsilon_1}\delta_P^{\epsilon_2} 
\in  F_{0P} \setminus F_{0P}^{*2}$,  $v_P$ a unit at $P$, 
 $\mu_{1P} \in L_{1P} =  F_{0P}[X]/(X^2 - a_{1P})$
 
ii) $a_{2P} \in \hat{R}_{0P}$  a unit at $P$  and 
$\partial_{\eta_2}(D_{0P}) = \bar{a}_{2P} \in \kappa(\eta_2)^*/\kappa(\eta_2)^{*2} $

 iii) $\mu_{2P} \in \hat{R}_{0P}[X]/(X^2 - a_{2P})^*$ 
 
 iv) $  N_{L_{1P}/F_{0P}}(\mu_{1P}) N_{L_{2P}/F_{0P}}(\mu_{2P}) = \lambda$, 
 where $L_{iP} = F_0[X]/(X^2 - a_i)$ for $i = 1, 2$ 
 
 vi) $\mu_{iP}$ is a reduced norm from $D \otimes L_{iP}$ for $i  =1, 2$
\end{prop}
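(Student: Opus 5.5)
The plan is to mirror the proof of (\ref{closed-d-unit}), now invoking (\ref{choice-at-P-pi}) in place of (\ref{2dim-unit}).

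\emph{Normalizing $d$.} Since $\nu_{\eta_1}(d)$ is odd and $\XX$ is regular, $\nu_{\eta_2}(d)$ is even. From the list of possible shapes of $d$ at $P$ we get $d = w_P\pi_P d_P^2$ or $d = \pi_P d_P^2$. If $d=w_P\pi_Pd_P^2$, I replace the parameter $\pi_P$ by $w_P\pi_P$. This changes neither $\eta_1$, nor the parity data $\epsilon_1,\epsilon_2$, nor the form $\lambda = u_P\pi_P^r\delta_P^s$ (only $u_P$ changes by a unit). So we may assume $F_P = F_{0P}(\sqrt{\pi_P})$, which is a field. We are then exactly in the setting before (\ref{2dim-class}) with $d=\pi$. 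In particular $R$ is regular local with maximal ideal $(\sqrt{\pi_P},\delta_P)$, and $D\otimes F_{0P}$ is unramified except possibly along $(\sqrt{\pi_P})$ and $(\delta_P)$.

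\emph{Split case.} Suppose $D\otimes F_{0P}$ is split. Choose a unit $v_P$ with $a_{1P}=v_P\pi_P^{\epsilon_1}\delta_P^{\epsilon_2}$ a non-square, and set $\mu_{1P}=\pi_P^{r_1}\delta_P^{s_1}\sqrt{a_{1P}}$. Then $N(\mu_{1P}) = -v_P\pi_P^{\epsilon_1}\delta_P^{\epsilon_2}(\pi_P^{r_1}\delta_P^{s_1})^2$. Take $a_{2P}$ to be a unit and $\mu_{2P}$ a unit of $\hat R_{0P}[X]/(X^2-a_{2P})$ with norm $-v_P^{-1}u_P$; for instance $a_{2P}=1$ and $\mu_{2P}=(-v_P^{-1}u_P,1)$. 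Since $D_{0P}=F_{0P}$, condition ii) holds trivially, and all reduced norm conditions are automatic.

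\emph{Non-split case.} By (\ref{2dim-class})(ii), $D_{0P}=(w_P,\delta_P)$. Hence $\partial_{\eta_2}(D_{0P})=\bar w_P$, so $a_{2P}=w_P$ satisfies ii). Now apply (\ref{choice-at-P-pi}) with $\pi=\pi_P$, $\delta=\delta_P$, $w=w_P$ and $\lambda$. It produces $a_1 = v\pi_P^{\epsilon_1}\delta_P^{\epsilon_2}$ a non-square, $\mu_1\in L_1$, and a unit $\mu_2\in\hat R_{0P}[X]/(X^2-w_P)$ with $N(\mu_1)N(\mu_2)=\lambda$, each $\mu_i$ a reduced norm from $D\otimes_F FL_i$. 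Set $a_{1P}=a_1$, $a_{2P}=w_P$, $\mu_{iP}=\mu_i$.

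\emph{Main obstacle.} The delicate point is the normalization step. I must check that rescaling $\pi_P$ by the unit $w_P$ keeps the hypotheses of (\ref{choice-at-P-pi}) and (\ref{2dim-class}) valid: the algebra $D$ must still be unramified off $(\sqrt{\pi'})$ and $(\delta)$ in the new coordinates. I would also check that "reduced norm from $D\otimes_F FL_i$" in (\ref{choice-at-P-pi}) is the same as condition vi) for $D\otimes L_{iP}$. Both should be formal, since $D\otimes L_{iP}=D\otimes_F F L_{iP}$.
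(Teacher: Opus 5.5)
Your proposal is correct and follows the paper's proof essentially step for step. The split case uses the same choices $a_{1P}=v_P\pi_P^{\epsilon_1}\delta_P^{\epsilon_2}$, $\mu_{1P}=\pi_P^{r_1}\delta_P^{s_1}\sqrt{a_{1P}}$, $a_{2P}=1$, $\mu_{2P}=(-v_P^{-1}u_P,1)$, and the non-split case takes $D_{0P}=(w_P,\delta_P)$ from (\ref{2dim-class}), sets $a_{2P}=w_P$, and appeals to (\ref{choice-at-P-pi}). The only difference is that you make the rescaling $\pi_P\mapsto w_P\pi_P$ explicit (so that $F_P=F_{0P}(\sqrt{\pi_P})$ exactly), whereas the paper leaves it implicit when it writes $d=v_P\pi_Pd_1^2$; as you note, this changes $u_P$ and $v_P$ only by units, so it is harmless.
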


\begin{proof} 
Suppose $D\otimes F_{0P}$ is a split algebra.
Let $v_P$ be a unit at $P$ such that $a_{1P} = v_P 
\pi_P^{\epsilon_1}\delta_P^{\epsilon_2} \in  F_{0P} \setminus F_{0P}^{*2}$. 
Then    
  $\mu_{1P} = \pi_P^{r_1}\delta_P^{s_1} \sqrt{a_{1P}}$, 
   $a_{2P} = 1$ and  $\mu_{2P}  = (-v_P^{-1}u_P, 1) 
   = F_{0P} \times F_{OP} =  F_{0P}[X]/(X^2 - 1)$  have the required properties.

Suppose $D\otimes F_{0P}$ is not a split algebra. 
Since $\nu_{\eta_1}(d)$ is odd, by the choice of  $\XX_0$, 
$\nu_{\eta_2}(d)$ is even. Hence $d= v_P \pi_P d_1^2$ for some 
$v_1\in \hat{R}_{0P}$ a unit and $d_1 \in F_{0P}^*$. 
In particular $D_{0P} = (w_P, \delta_P)$. 
Let $a_{2P} = w_P$. 
Hence, by  (\ref{choice-at-P-pi}),  there exist 
$a_{1P} = v_P\pi^{\epsilon_1} \delta^{\epsilon_2} \in 
F_{0P}^* \setminus F_{0P}^{*2}$  with $v_P\in \hat{R}_{0P}^*$   
and $\mu_{1P} \in L_{1P} = F_{0P}[X]/(x^2 - a_{1P})$ and 
$\mu_{2P} \in L_{2P} = F_{0P}[X]/(x^2 - a_{2P})$  such that 

i)  $  N_{L_{1P}/F_{0P}}(\mu_{1P})  N_{L_{2P}/F_{0P}}(\mu_{2P})= \lambda$

ii)  $\mu_{iP}$ is a reduced from from $ D \otimes  L_{iP}$ for $i = 1, 2$

iii)  $\mu_{2P} \in \hat{R}_{0P}[X]/(X^2 - a_{2P})^*$.

Then  $a_{1P}$, $a_{2P}$, $\mu_{1P}$ and $\mu_{2P}$  have the required properties.  
\end{proof}

\section{Choices at codimension one points and curve points}\label{section7}

Let  $p \geq  3$ be  a prime and  $K$ be a  $p$-adic field.
Let $F_0$ be the function field of a curve over $K$ and 
$F = F_0(\sqrt{d})$ a quadratic field extension.  
Let $D$ be a central division algebra over $F$ with a $F/F_0$-involution. 
Suppose that period of $D$ is 2.  
Let $\lambda \in F_0^* \cap Nrd(D)^*$.

 Let $\XX_0$, $\XX$, $\DD$ and $\PP_0$ be as in (\ref{nodalpoints}). 
 Let $\eta \in X_0$ be a codimension zero  point.  
 Let $\pi$ be a parameter at $\eta$. 
 Then, by (\ref{dvr-class-4}) and (\ref{dvr-class-2}), 
we have  $D\otimes F_{0\eta} = (b, c) \otimes (w, \pi)$ 
for some  $b, c, w \in F_{0\eta}$.  Let $D_{0\eta} =(b, c) \otimes (w, \pi)$.
 Write $\lambda = u \pi^r$
 for some $u \in F_{0\eta}$ with  $\nu_\eta(u) = 0$.  
Let $\epsilon   \in \{ 0, 1\}$ and $r_1 \in {\mathbb Z }$ such that 
$r = 2r_1 + \epsilon$.  Then $\lambda   = u \pi ^{\epsilon}  (\pi ^{r_1})^2$. 
Let $\PP_\eta = \PP_0 \cap\overline{\{ \eta \}}$.  

\begin{prop}
\label{codim1-even-even}
 Let $\eta \in X_0$ be a codimension  zero point.  Suppose 
 $\nu_\eta(d)$ and  $\nu_\eta(\lambda)$ are  even. 
For each $P \in \PP_\eta$, if $d$ is a unit at $P$ up to a square $F_{0P}$,  
 let  $a_{iP}$ and $\mu_{iP}$, $i = 1, 2, 3$ be as in (\ref{closed-d-unit})  and 
if $d$ is not a unit at $P$ up to a square in $F_{0P}$, let 
$a_{iP}$  and $\mu_{iP}$, $i = 1, 2$ be as in (\ref{closed-d=pi}),
$a_{3P} = da_{2P}$ and $\mu_{3P} = 1$.
Then there exist $a_{1\eta} , a_{2\eta}, a_{3\eta} \in F_{0\eta}$  units at $\eta$ and 
$\mu_{i\eta} \in F_{0\eta}[X]/(X^2 - a_{i\eta})$ such that 

i) $\prod_i N_{L_{i\eta}/F_{0\eta}}(\mu_{i\eta}) = \lambda$

ii) $\mu_{i\eta}$ is a reduced norm from $D\otimes L_{i\eta}$  for $i = 1, 2,  3$

iii) ind$(D\otimes L_{i\eta}) \leq {2}$ for all  $P \in \PP_\eta$ and   $i = 1, 2,  3$

iv) $a_{i\eta}a_{iP} \in F_{0P, \eta}^{*2}$ for all  $P \in \PP_\eta$ and   $i = 1, 2,  3$

v)  for $P \in \PP_\eta$, there is an isomorphism 
$$\phi_{iP, \eta} : F_{0P,\eta}[X]/(X^2 - a_{i\eta}) \to  F_{0P,\eta}[X]/(X^2 - a_{iP})$$

 such that 

$$\phi_{iP, \eta}(\mu_{i\eta} ) \mu_{iP}^{-1}   \in   (F_{0P, \eta}[X]/(X^2 - a_{iP}) )^{2^m}$$ for all $m \geq 1$ and  $i = 1, 2,  3$. 
\end{prop}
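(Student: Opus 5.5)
The plan is to reduce to Proposition~\ref{dvr-unit}, applied over the complete discretely valued field $F_{0\eta}$. Its valuation ring $R_{0\eta}$ has residue field $\kappa(\eta)$, the function field of a curve over the finite field $k$, hence a positive characteristic global field of characteristic $\neq 2$. The places $\nu$ of $\kappa(\eta)$ corresponding to the points $P\in\PP_\eta$ are handled through the fields $F_{0P,\eta}$: each is a complete discretely valued field whose residue field is the completion $\kappa(\eta)_\nu$ of $\kappa(\eta)$ at $\nu$.

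First I normalize at $\eta$. Since $\nu_\eta(d)$ is even, Lemma~\ref{dvr-class-4} gives $D\otimes F_{0\eta}=(b,c)\otimes(w,\pi)$ with $b,c,w$ units, after multiplying $d$ by a square so that it is a unit. Since $\nu_\eta(\lambda)$ is even, I write $\lambda=u\pi^{2r_1}$ with $u$ a unit. The factor $\pi^{2r_1}$ can be absorbed into $\mu_{1\eta}$, so it suffices to treat $u$. To apply Proposition~\ref{dvr-unit} I must check two things.

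\begin{itemize}
\item $\bar u$ is a norm from $\kappa(\sqrt{\bar d},\sqrt{\bar w})/\kappa(\sqrt{\bar d})$. This should follow from $\lambda\in Nrd(D)$. Over $F_\eta$ the algebra is $(b,c)\otimes(w,\pi)$, and taking the residue of the reduced norm condition along $\pi$ forces $\bar u$ to be a norm from the residue of $(w,\pi)$, namely the extension obtained by adjoining $\sqrt{\bar w}$ over $\kappa(\sqrt{\bar d})$. I would justify this by the cup product / residue argument $\partial(\lambda)\cup[D]$ as in \cite{PPS}.
\item Local data at each $\nu\in S_0$. I take $S_0$ to be $\PP_\eta$ together with the finitely many bad places of $\bar u,\bar b,\bar c,\bar w,\bar d$. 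At $\nu=P\in\PP_\eta$, the elements $a_{iP}$ and $\mu_{iP}$ from Propositions~\ref{closed-d-unit} and~\ref{closed-d=pi} live in $F_{0P}$. Viewed in $F_{0P,\eta}$, they are units up to squares along $\eta$: by construction $a_{2P},a_{3P},\mu_{2P},\mu_{3P}$ are units along $\pi$, while $a_{1P}$ and $\mu_{1P}$ differ from units by powers of $\delta_P$, which are units at $\eta$, and of $\pi_P$. Reducing them to $\kappa(\eta)_\nu$ gives $x_\nu,y_{i\nu}$. For the remaining places of $S_0$ I choose the data using that local fields have $Br_2=\mathbb Z/2$.
\end{itemize}

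Two compatibilities need checking. First, $a_{2P}$ reduces to the residue of $D_{0P}$, which matches $\bar w$ up to squares, and $a_{3P}=da_{2P}$ matches $\bar w\bar d$. This uses that the residues of $D$ at $\eta$ computed in $F_{0\eta}$ and in $F_{0P}$ agree. Second, condition ii) of Proposition~\ref{dvr-unit} for $y_{1\nu}$ must hold; it should follow from $\mu_{1P}$ being a reduced norm of $D\otimes L_{1P}$.

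Proposition~\ref{dvr-unit} then yields units $a_{i\eta}$ and $\mu_{i\eta}$ satisfying conditions i) and ii) of the present proposition, with $(b,c)\otimes L_{1\eta}$ split. Hence $D\otimes L_{1\eta}=(w,\pi)\otimes L_{1\eta}(\sqrt d)$ has index $\le 2$, and $D\otimes L_{2\eta}$, $D\otimes L_{3\eta}$ are $(b,c)$ after base change, also of index $\le 2$; this gives iii). Conditions iv) and v) come from closeness combined with Hensel's lemma in the complete field $F_{0P,\eta}$ (residue characteristic $\neq 2$): a unit sufficiently close to $1$ is a $2^m$-th power for all $m$. This defines $\phi_{iP,\eta}$ and makes the ratios $2^m$-th powers.

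The main obstacle is the first part of the reduction. I have to extract a residual local datum at each $P$ that genuinely satisfies hypotheses i) and ii) of Proposition~\ref{dvr-unit}. The difficulty is that $\mu_{1P}$ may involve $\sqrt{a_{1P}}$, and $a_{1P}$ may be $\pi_P$ times a unit; in that case $L_{1P}\otimes F_{0P,\eta}$ is ramified at $\eta$. Since $\nu_\eta(\lambda)$ is even, $\epsilon_1=0$ when $\pi_P$ defines $\eta$, so $a_{1P}$ is a unit at $\eta$ times a power of $\delta_P$. Still, the square-class bookkeeping, and the claim that the product of the residual norms equals $\bar u$ in $\kappa(\eta)_\nu$, need checking case by case. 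I would try first to verify this in the split case using the explicit choices $\mu_{2P}=(-v_P^{-1}u_P,1)$, and then in the cases from Lemmas~\ref{2dim-unit} and~\ref{choice-at-P-pi}.
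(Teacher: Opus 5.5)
Your proposal is correct and follows the paper's proof essentially step by step. Both arguments normalize $d$ to a unit at $\eta$, write $D\otimes F_{0\eta}=(b,c)\otimes(w,\pi_\eta)$ and $\lambda=u\pi_\eta^{2r_1}$, and obtain $\bar u$ as a norm from $\kappa(\eta)(\sqrt{\bar d},\sqrt{\bar w})$ via the residue of $(\lambda)\cup[D]$. They then read off $x_P=\bar a_{1P}$ and $y_{iP}$ from the data at $P\in\PP_\eta$, apply Proposition~\ref{dvr-unit}, and take $a_{2\eta}=w$, $a_{3\eta}=dw$, $\mu_{1\eta}=\mu_1\pi_\eta^{r_1}$.

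The bookkeeping you planned to do case by case is handled uniformly in the paper. Since $a_{1P}$ is a unit at $\eta$ and $\mu_{2P},\mu_{3P}$ are units along $\eta$, one gets $\mu_{1P}=y'_P\pi_\eta^{r_1}$ with $y'_P$ a unit in the unramified extension $L_{1P}\otimes F_{0P,\eta}$. Reducing $\prod_i N(\mu_{iP})=u\pi_\eta^{2r_1}$ modulo $\eta$ then gives hypothesis i) of Proposition~\ref{dvr-unit} directly, with $y_{1P}=\bar{y}'_P$.
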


\begin{proof}  Since $\nu_\eta(d)$ is even,   
replacing $d$ by $d$ times a square in $F_{0\eta}$,
we assume that $\nu_\eta(d) = 0$.

Let $\pi_\eta$ be a parameter at $\eta$ such that 
for every $P \in \PP_\eta$, the maximal ideal at $P$ is given by 
$(\pi_\eta, \delta_P)$ for some prime $\delta_P$.

By (\ref{dvr-class-4}) and (\ref{dvr-class-2}), we have 
$D\otimes F_\eta = (b, c) \otimes (w, \pi_\eta)$ for some $b, c, w \in F_{0\eta}$ which are units at $\eta$. 
Denote $D\otimes F_{\eta}$ by $D_{0\eta}.$ Let
$u_0$, $b_0$, $c_0$, $d_0$ and $w_0$ be the images of $u, b, c,  d, w$ in $\kappa(\eta)$ respectively. 
Since $\lambda = u\pi_{\eta}^r$ with $u\in F_{0\eta}$ a unit at $\eta$
and $\lambda$ is a reduced norm from $D\otimes F_\eta$, the cup product $\partial(u\pi_{\eta}^r)\cup [(b,c)\otimes (w, \pi_{\eta})]=0 \in H^3(F_{\eta}, \mu_2^{\otimes2}).$ Since $r$ is even, $\partial(u\pi_{\eta}^r)\cup [(b,c)]=0.$ Then $ \partial(u\pi_{\eta}^r)\cup [(w,\pi_{\eta})]=(w,u)=0 $ and we have $u \in N_{F_{0\eta}(\sqrt{d}, \sqrt{w})/F_{0\eta}(\sqrt{d})}(F_{0\eta}(\sqrt{d}, \sqrt{w})^*)$ (cf. \cite[Theorem 8.9.1]{GS2017}, \cite[section 4]{PPS}). Hence $u_0 \in  N_{\kappa(\eta)(\sqrt{d_0}, \sqrt{w_0})/
F_0(\sqrt{d_0})}(\kappa(\eta)(\sqrt{d_0}, \sqrt{w_0})^*)$.

Since $\nu_\eta(\lambda) = 2r_1$, 
by the choice of  $a_{1P}$  (\ref{closed-d-unit}, \ref{closed-d=pi}), we have 
$a_{1P} = v_P  \delta_P^{\epsilon_2}$ for some unit $v_P$ at $P$
and $\epsilon_2 \in \{ 0, 1\}$. Further $a_{1P}$ is not  square in $F_{0P}$. Since $F_{0P,\eta}$ is the completion of $F_{0P}$
at $\eta$, $a_{1P}$ is not a square in $F_{0P, \eta}$.
Let $x_P = \bar{a}_{1P} =  \bar{v}_P\bar{\delta}_P^{\epsilon_2}$. 

  Since $\mu_{iP} \in R_{0P}[X]/(X^2 - a_{iP})$
are units along $\eta$ for $i = 2, 3$ and $\prod_1^3 N_{L_{iP}/F_{0P}}(\mu_{iP}) = \lambda$, it follows that 
$\nu_\eta(N_{L_{1P}/F_{0P}}(\mu_{1P}))  = \nu_\eta(\lambda)$. 
Since $L_{1P} \otimes F_{0P, \eta} = F_{0P, \eta}[X]/(X^2 - a_{1P})$ is unramified
and $\nu_\eta(N_{L_{1P}/F_{0P}}(\mu_{1P}))  = \nu_\eta(\lambda) = 2r_1$,
we have  $\mu_{1P} =  y'_P \pi_\eta^{r_1}$ for some $y'_P \in L_{1P} \otimes F_{0P, \eta}$ unit 
in the valuation ring.  Let $y_{1P}  = \bar{y'}_P \in \kappa(\eta)_{1P} = \kappa(\eta)_P[X]/(X^2 - x_P)$. 

For $i = 2, 3$,  let $y_{iP}$ be the image of $\mu_{iP}$ in $\kappa(\eta)_P[X]/(X^2 - \bar{a}_{2P}).$
  By the choice (\ref{closed-d-unit},  \ref{closed-d=pi}), we have 
 $a_{2P} = \partial_{\eta}(D_{0\eta}) = \bar{w}$ and $a_{3P} = da_{2P}$. 
 Then  $y_{2P} = \bar{\mu}_{2P} \in \kappa(\eta)_{2P} = 
 \kappa(\eta)_P[X]/(X^2 - \bar{a}_{2P}) = \kappa(\eta)_P[X]/(X^2 -  \bar{w})$ and 
  $y_{3P} = \bar{\mu}_{3P} \in \kappa(\eta)_{3P} =\kappa(\eta)_P[X]/(X^2 - \bar{a}_{3P}) = 
 \kappa_\nu[X]/(X^2 -  \bar{d}\bar{w})$. 

Further we  have  

i) $\prod_i N_{\kappa(\eta)_{iP}/\kappa(\eta)_P}(y_{iP}) = \bar{u}$. 

ii) $y_{1P} \in N_{\kappa(\eta)_P(\sqrt{\bar{d}}, \sqrt{\bar{w}})/\kappa(\eta)_P}
(\kappa(\eta)_P(\sqrt{\bar{d}}, \sqrt{\bar{w}})^*)$. 

Hence, by (\ref{dvr-unit}), there exists $a \in \hat{R}_{0\eta}^*$,
$\mu_{1 } \in \hat{R}_{0\eta}[X]/(X^2 - a)$, $\mu_{2}
 \in \hat{R}_{0\eta}[X]/(X^2 - w) $ and 
$\mu_{3} \in \hat{R}_{0\eta}[X]/(X^2 - d w) $  such that

i) $\bar{a} $ is close to $x_P$ and $\bar{\mu}_{i}$ is close to $y_{iP}$ 
for all $P  \in \PP_0$  and $i = 1, 2, 3$ 
 
 ii) $\prod_i N_{L_{i\eta}/F_{0\eta}}(\mu_{i}) =  u$, where $L_{1\eta} =   F_{0\eta}[X]/(X^2 - a)$, 
  $L_{2\eta} = F_{0\eta}[X]/(X^2 - w)$ and  $L_{3\eta} = F_{0\eta}[X]/(X^2 - dw)$
  
  iii) $(b, c) \otimes L_{1 \eta}$ is split
 
 iv) $\mu_i$ is a reduced norm from $D \otimes_{F_{0\eta}} L_{i\eta}$ for $i = 1, 2, 3$.  

 Since $D  \otimes L_{1\eta} =  ( (b, c) \otimes L_{1\eta} ) \otimes (w, \pi_\eta) \otimes L_{1\eta})
 =  (w, \pi_\eta) \otimes L_{1\eta})$, $\pi_\eta$ is a reduced norm from $D  \otimes L_{1\eta}$.
 Hence $a_{1\eta} = a $, $a_{2\eta} = w$, $a_{3\eta} = wd$, $\mu_{1\eta}  =  \mu_1 \pi_\eta^{r_1}$,
 $\mu_{2\eta} = \mu_2$ and $\mu_{3\eta} = \mu_3$ have the required properties. 
  \end{proof}
 
 \begin{prop}
\label{codim1-even-odd}
 Let $\eta \in X_0$ be a codimension  zero point.  Suppose 
 $\nu_\eta(d)$ is even  and  $\nu_\eta(\lambda)$ is odd. 
For each $P \in \PP_\eta$, if $d$ is a unit at $P$ up to a square $F_{0P}$,  
 let  $a_{iP}$ and $\mu_{iP}$, $i = 1, 2, 3$ be as in (\ref{closed-d-unit})  and 
if $d$ is not a unit at $P$ up to a square in $F_{0P}$, let 
$a_{iP}$  and $\mu_{iP}$, $i = 1, 2$ be as in (\ref{closed-d=pi}),
$a_{3P} = da_{2P}$ and $\mu_{3P} = 1$.
Then there exist $a_{1\eta} , a_{2\eta}, a_{3\eta} \in F_{0\eta}$ and 
$\mu_{i\eta} \in F_{0\eta}[X]/(X^2 - a_{i\eta})$ such that 

i) $\prod_i N_{L_{i\eta}/F_{0\eta}}(\mu_{i\eta}) = \lambda$

ii) $\mu_{i\eta}$ is a reduced norm from $D\otimes L_{i\eta}$  for $i = 1, 2,  3$

iii) ind$(D\otimes F_{i\eta}) \leq 2$ for $i = 1, 2,  3$

iv) $a_{i\eta}a_{iP} \in F_{0P, \eta}^{*2}$ for $i = 1, 2,  3$

v) for $P \in \PP_\eta$, there is an isomorphism 
$$\phi_{iP, \eta} : F_{0P,\eta}[X]/(X^2 - a_{i\eta}) \to  F_{0P,\eta}[X]/(X^2 - a_{iP})$$

 such that 

$$\phi_{iP, \eta}(\mu_{i\eta} ) \mu_{iP}^{-1}   \in  
  (F_{0P, \eta}[X]/(X^2 - a_{iP}) )^{2^m}$$ for all $m \geq 1$ and  $i = 1, 2,  3$. 
\end{prop}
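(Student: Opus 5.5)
We follow the strategy of the proof of (\ref{codim1-even-even}), with $\pi_\eta$ now contributing an odd power that is absorbed into the first quadratic extension through a ramified choice of $a_{1\eta}$. As there, we first replace $d$ by $d$ times a square so that $\nu_\eta(d)=0$. We then choose a parameter $\pi_\eta$ at $\eta$ such that for each $P\in\PP_\eta$ the maximal ideal at $P$ is $(\pi_\eta,\delta_P)$. By (\ref{dvr-class-4}) we write $D\otimes F_{\eta}=(b,c)\otimes(w,\pi_\eta)$ with $b,c,w$ units at $\eta$, and we write $\lambda=u\pi_\eta^{2r_1+1}$ with $u$ a unit at $\eta$.

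The reduced norm condition gives $\partial(\lambda)\cup[D]=0$. Because $r$ is now odd, the residue computation yields two constraints instead of the single condition $(w,u)=0$: the class of $(\bar b,\bar c)$ must be killed by $\bar u$-data, and $\bar w$ must be related to $\bar u$. Concretely, we expect to obtain $(\bar b,\bar c)\otimes\kappa(\eta)(\sqrt{\bar d})=(\bar w,\bar u)\otimes\kappa(\eta)(\sqrt{\bar d})$ up to the appropriate twist, or equivalently that $(b,c)\otimes(w,-u\pi_\eta)$ becomes split over $F_{0\eta}(\sqrt{-u\pi_\eta})$.

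The natural choice at $\eta$ is therefore
\[
a_{1\eta}=-u\pi_\eta,\qquad \mu_{1\eta}=\pi_\eta^{r_1}\sqrt{a_{1\eta}},
\]
so that $N(\mu_{1\eta})=\lambda$. We then try to show, via \cite[Lemma 6.2]{PPS} in its complete discretely valued form, that $\mu_{1\eta}$ is a reduced norm from $D\otimes L_{1\eta}$ and that the index drops to at most $2$ there. For $i=2,3$ we take $a_{2\eta}=w$, $a_{3\eta}=dw$ and $\mu_{2\eta},\mu_{3\eta}$ units whose norms multiply to $1$, so that condition i) holds.

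The serious work is the compatibility conditions iv) and v) with the chosen data at $P\in\PP_\eta$. At such $P$, with $\nu_\eta(\lambda)=r$ odd, (\ref{closed-d-unit}) and (\ref{closed-d=pi}) give $a_{1P}=v_P\pi_\eta\delta_P^{\epsilon_2}$. We must arrange $a_{1\eta}a_{1P}\in F_{0P,\eta}^{*2}$, which requires the residue of $-u$ at $\eta$ to agree with $\bar v_P\bar\delta_P^{\epsilon_2}$ modulo squares in $\kappa(\eta)_P$. Since $u$ is fixed by $\lambda$, we do not use $u$ directly. Instead we take
\[
a_{1\eta}=x\pi_\eta
\]
with $x$ a unit lifting a global element $\bar x\in\kappa(\eta)$ chosen by weak approximation close to $x_P$. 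We set $\mu_{1\eta}=\mu_1\pi_\eta^{r_1}\sqrt{x\pi_\eta}$, which has norm $-x\pi_\eta^{2r_1+1}N(\mu_1)$. The leftover unit $\bar u\,(-\bar x)^{-1}$ then has to be written as $N(y_1)N(y_2)N(y_3)$, with $y_2,y_3$ in $\kappa(\eta)(\sqrt{\bar w})$ and $\kappa(\eta)(\sqrt{\bar d\bar w})$ and close to the residues $y_{iP}$ of $\mu_{iP}$.

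This is where Lemmas (\ref{globalfield1}), (\ref{globalfield}) and (\ref{weakapp}) enter, exactly as in (\ref{dvr-unit}). We lift the resulting solution using completeness of $\hat R_{0\eta}$ and Hensel's lemma. Condition v) follows from closeness: a unit that is close to $1$ in a complete field of residue characteristic $\neq 2$ is a $2^m$-th power for all $m$.

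The main obstacle is the reduced norm condition ii) for $i=1$ together with condition iii). We need $D\otimes L_{1\eta}$ to have index $\le 2$, and $\pi_\eta$-type elements to be reduced norms from it. Over the ramified extension $L_{1\eta}=F_{0\eta}(\sqrt{x\pi_\eta})$, the algebra $(w,\pi_\eta)$ becomes $(w,-x)$, which is unramified, so $D\otimes L_{1\eta}$ becomes unramified. Its residue $(\bar b,\bar c)\otimes(\bar w,-\bar x)$, taken over the global field $\kappa(\eta)(\sqrt{\bar d})$, must then be of index $\le2$ and must have $\bar x$-related norms.

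We would first try to choose $\bar x$ by a weak-approximation argument that makes $(\bar b,\bar c)\otimes(\bar w,-\bar x)$ split at all places. The local constraints are the $x_P$, which already satisfy the analogous conditions at $P$, together with the cup-product condition coming from $\lambda$. Once this algebra is split, the residue of $\mu_{1\eta}$ is automatically a norm, and the argument closes as in (\ref{codim1-even-even}).
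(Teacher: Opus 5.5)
You have a genuine gap, and it sits where you locate the ``main obstacle''. After dropping your first choice you set $a_{1\eta}=x\pi_\eta$. This requires a global $\bar x\in\kappa(\eta)$ that has the prescribed square class near each $P\in\PP_\eta$. It must also make $D\otimes L_{1\eta}$ split, which is forced because $\mu_{1\eta}$ has odd valuation in the ramified extension $L_{1\eta}$ while $D\otimes L_{1\eta}$ is unramified. You only say you ``would try'' to find such $\bar x$ by weak approximation. You do not check that the local conditions at the $P$ are compatible with the global splitting requirement, and you do not construct $\bar x$. So ii) for $i=1$ is left unproved. There are also slips:
\begin{itemize}
\item over $F_{0\eta}(\sqrt{x\pi_\eta})$ one has $(w,\pi_\eta)=(w,x)$, not $(w,-x)$;
\item the cup-product condition yields one relation, not two, since the unramified part lies in $H^3$ of a global field of positive characteristic, which is zero.
\end{itemize}

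Two observations make the detour unnecessary.

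First, since $\nu_\eta(\lambda)=2r_1+1$ is odd, $(\lambda)\cup[D]=0$ amounts to the single residue identity $(\bar b,\bar c)=(\bar w,-\bar u)$. Hence $D\otimes F_\eta\cong(w,-u\pi_\eta)$, a quaternion algebra whose second slot equals $-\lambda$ up to squares. It is therefore split over $F_\eta(\sqrt{-\lambda})$; this is the paper's use of \cite[Lemma 4.7]{PPS}. It is also split over $F_\eta(\sqrt{w})=F_\eta(\sqrt{dw})$. So ii) and iii) hold trivially for $a_{1\eta}=-\lambda$, $a_{2\eta}=w$, $a_{3\eta}=dw$, $\mu_{1\eta}=\sqrt{-\lambda}$, $\mu_{2\eta}=\mu_{3\eta}=1$. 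This is exactly your first ``natural choice''.

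Second, consider iv) and v). Because $\nu_\eta(\lambda)$ is odd, $\pm\lambda$ is not a square in $F_{0P}$ for every $P\in\PP_\eta$. The constructions (\ref{2dim-square}), (\ref{2dim-unit}), (\ref{choice-at-P-pi}) underlying (\ref{closed-d-unit}) and (\ref{closed-d=pi}) then already give
\begin{itemize}
\item $a_{1P}=-u_P\pi_P^{\epsilon_1}\delta_P^{\epsilon_2}\equiv-\lambda$;
\item $\mu_{1P}=\pi_P^{r_1}\delta_P^{s_1}\sqrt{a_{1P}}$, a square root of $-\lambda$;
\item $\mu_{2P}=\mu_{3P}=1$.
\end{itemize}
In the split cases one takes $v_P=-u_P$. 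So the compatibility holds on the nose and no approximation is needed. Your sentence ``since $u$ is fixed by $\lambda$, we do not use $u$ directly'' is where the argument went astray, because $x_P\equiv-\bar u$ holds automatically.
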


\begin{proof}   
By \cite[Proposition 5.8 and Corollary 5.9]{PPS}, $\text{ind}(D)=\text{ind}(D \otimes F_{0\eta})=\text{per}(D)=2$ is 2.

Since $\nu_\eta(\lambda)$ is odd and $\partial(\lambda)\cup [D \otimes F_{0\eta}]=0$,
  $D\otimes F_{0\eta} = (w, \pi_\eta)$  
   for some   parameter $\pi_\eta$  at $\eta$ and $w  \in F_0^*$   a  unit  at $\eta$. 

Since $\nu(\lambda) $ is odd, $\pm \lambda$ is not a square in $F_{0P}$ for all
$P \in \PP_\eta$. Hence, by the choice of $a_{1P}$ and $\mu_{iP}$,
we have $a_{1P}\lambda \in F_{0P, \eta}^{*2}$, $\mu_{1P} \sqrt{\lambda}
\in F_{P\eta}(\sqrt{a_{1P}})^{*2}$. Further 
$w a_{2P} \in F_{0P,\eta}^{*2}$, $a_{3P} = a_{2P}d$, $\mu_{1P} = \mu_{2P} = 1$. 

Let $a_{1\eta} = -\lambda$, $a_{2\eta} = w$,  $a_{3\eta} = dw$, $\mu_{1\eta} = \sqrt{-\lambda}$
and $\mu_{2\eta} = \mu_{2\eta} = 1$.
Since $\lambda$ is a reduced norm from $D$, we have $(\lambda) \cdot D = 0 \in H^3(F, \mu_2)$.
Since $\nu(\lambda)$ is odd,   if $D\otimes F_\eta$ is not split, then 
 by (\cite[Lemma 4.7]{PPS}),  ind$(D \otimes F_\eta(\sqrt{a_{1\eta}}))  < $ ind$(D\otimes F_\eta)$.
In particular  $D\otimes F_{\eta}(\sqrt{a_{1\eta}})$ is split.  
Hence $a_{1\eta} = -\lambda$, $a_{2\eta} = w$, $a_{3\eta} = d w$, $\mu_{1\eta} = \sqrt{-a_{1\eta}}$
and $\mu_{2\eta} = \mu_{3\eta} = 1$ have the required properties.   \end{proof}

 \begin{prop}
\label{codim1-d=pi}
 Let $\eta \in X_0$ be a codimension zero point.  Suppose 
 $\nu_\eta(d)$ is odd. 
For each $P \in \PP_\eta$,  let 
$a_{iP}$  and $\mu_{iP}$, $i = 1, 2$ be as in (\ref{closed-d=pi}). 
Then there exist $a_{1\eta} , a_{2\eta} \in F_{0\eta}$ and 
$\mu_{i\eta} \in F_{0\eta}[X]/(X^2 - a_{i\eta})$ such that 

i) $\prod_i N_{L_{i\eta}/F_{0\eta}}(\mu_{i\eta}) = \lambda$

ii) $\mu_{i\eta}$ is a reduced norm from $D\otimes L_{i\eta}$  for $i = 1, 2$

iii) ind$(D\otimes F_{i\eta}) \leq 2$ for $i = 1, 2$

iv) $a_{i\eta}a_{iP} \in F_{0P, \eta}^{*2}$ for $i = 1, 2$

v) for $P \in \PP_\eta$, there is an isomorphism 
$$\phi_{iP, \eta} : F_{0P,\eta}[X]/(X^2 - a_{i\eta}) \to  F_{0P,\eta}[X]/(X^2 - a_{iP})$$
 such that 
$$\phi_{iP, \eta}(\mu_{i\eta} ) \mu_{iP}^{-1}   \in    (F_{0P, \eta}[X]/(X^2 - a_{iP}) )^{2^m}$$ 
for all $m \geq 1$ and  $i = 1, 2$. \end{prop}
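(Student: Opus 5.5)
We treat the case where $\nu_\eta(d)$ is odd, so $F_\eta=F_{0\eta}(\sqrt{d})/F_{0\eta}$ is ramified. Lemma \ref{dvr-class-2} then gives $D\otimes F_{0\eta}=(b,c)\otimes F_\eta$ with $b,c$ units at $\eta$. The plan is to mimic Proposition \ref{codim1-even-even}, but with only two quadratic extensions. We take $a_{2\eta}=w$, where $w$ is a unit at $\eta$ whose residue is $\partial_{\eta_2}(D_{0P})$ at the nodal points. We then solve for $a_{1\eta}$ and $\mu_{1\eta},\mu_{2\eta}$ by reducing to the residue field $\kappa(\eta)$, which is a global field of positive characteristic. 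There we use weak approximation as in Lemmas \ref{weakapp} and \ref{globalfield1}, and lift back by completeness.

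First, normalize. Replace $d$ by $\pi_\eta$ up to a unit and a square, with $\pi_\eta$ chosen so that $m_P=(\pi_\eta,\delta_P)$ for every $P\in\PP_\eta$. Write $\lambda=u\pi_\eta^r$. Since $F_\eta/F_{0\eta}$ is totally ramified, the residue fields of $F_\eta$ and $F_{0\eta}$ coincide. The reduced norm condition, $(\lambda)\cup[D]=0$, then yields the residue condition $\bar u\in Nrd((\bar b,\bar c))$ in the appropriate twisted sense; more precisely, $(\bar b,\bar c)\cup(\bar u)$ must be compatible with $\partial(\lambda)$. Second, at each $P\in\PP_\eta$ we record the data $a_{1P}=v_P\pi^{\epsilon_1}\delta_P^{\epsilon_2}$ and $\mu_{iP}$ from Proposition \ref{closed-d=pi}, with $a_{2P}=w_P$ and $\mu_{2P}$ a unit. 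Pushing these into $F_{0P,\eta}$ and taking residues gives local data $x_P\in\kappa(\eta)_P$ and $y_{1P},y_{2P}$. These satisfy $N(y_{1P})N(y_{2P})=\bar u$ up to the appropriate power of the parameter, and $(\bar b,\bar c)$ splits over $\kappa(\eta)_P(\sqrt{x_P})$. When $\epsilon_1=1$, the factor $\mu_{1P}$ carries the odd part $\sqrt{a_{1P}}$; we handle this as in Proposition \ref{codim1-even-odd}, taking $a_{1\eta}=-\lambda$ times a unit and using \cite[Lemma 4.7]{PPS}.

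Third, in the case $\epsilon_1=0$, apply Lemma \ref{globalfield} (or its two-factor analogue, Lemma \ref{weakapp} combined with Lemma \ref{globalfield1}) over $\kappa(\eta)$ with the finite set $S$ of places coming from $\PP_\eta$ and the bad places of $\bar b,\bar c,\bar u,\bar w$. This produces $x\in\kappa(\eta)$ close to each $x_P$ and $y_1,y_2$ close to each $y_{iP}$, with $N(y_1)N(y_2)=\bar u$. Because the local degree at each place of $S$ is $2$ and the other places are unramified, $(\bar b,\bar c)$ splits over $\kappa(\eta)(\sqrt{x})$; this is the same argument as in Proposition \ref{dvr-unit}. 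Lift by Hensel's lemma to $a\in\hat R_{0\eta}^*$ and $\mu_1,\mu_2$, adjusting $\mu_2$ so that the norm product equals $u$ exactly. Then set $\mu_{1\eta}=\mu_1\pi_\eta^{r_1}$, where $\pi_\eta$ is a reduced norm since $d\sim\pi_\eta$.

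Finally, verify the listed conditions. The reduced norm property over $L_{1\eta}$ holds because $D\otimes L_{1\eta}$ is split. Over $L_{2\eta}=F_{0\eta}(\sqrt w)$, every unit is a reduced norm from the unramified algebra $(b,c)\otimes L_{2\eta}(\sqrt d)$, by \cite[Theorem 33.5]{reiner} and completeness. The index bound $\mathrm{ind}\le 2$ follows since $D\otimes F_{0\eta}$ is already quaternion. Conditions iv) and v) follow from closeness: a unit close to $1$ in a complete field of residue characteristic $\neq 2$ is a $2^m$-th power for all $m$. The main obstacle is the third step, namely arranging simultaneously the norm identity and the splitting of $(\bar b,\bar c)$ over $\kappa(\eta)(\sqrt x)$ while matching the local data at $P$. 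This requires checking that the residues of the nodal choices from Proposition \ref{closed-d=pi} indeed satisfy the hypotheses of Lemma \ref{globalfield}.
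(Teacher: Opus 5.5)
\textbf{There is a genuine gap in your third step, which is the heart of the proposition.} Because you fix $a_{2\eta}=w$ at the outset, you need global elements $x\in\kappa(\eta)$, $y_1\in\kappa(\eta)(\sqrt{x})$ and $y_2\in\kappa(\eta)(\sqrt{\bar w})$ with $N(y_1)N(y_2)=\bar u$ that approximate the nodal data. Neither tool you cite delivers this.

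Lemma \ref{globalfield} is a three-factor statement. Its input is that $u_0$ is a norm from $\kappa(\sqrt{d_0},\sqrt{w_0})/\kappa(\sqrt{d_0})$; in Proposition \ref{dvr-unit} one has $d_0=\bar d$, and this condition comes from the residue of $(w,\pi)$. Here $\nu_\eta(d)$ is odd, so there is no residue $\bar d$, no third field, and no such norm condition on $\bar u$.

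Combining Lemma \ref{globalfield1} with Lemma \ref{weakapp} fails too. Once $y_1$ is fixed, you need $\bar u N(y_1)^{-1}$ to be a global norm from $\kappa(\eta)(\sqrt{\bar w})$. Nothing prevents $N(y_1)$ from having odd valuation at a place outside $S$ that is inert in $\kappa(\eta)(\sqrt{\bar w})$. You flag this yourself as ``the main obstacle'', but it is exactly what has to be proved.

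For $\epsilon_1=0$ your fixed-$w$ route could be rescued. Choose $x$ first and apply Hasse--Minkowski to $\langle 1,-x,-\bar u,\bar u\bar w\rangle$. Once $S$ contains the places where $\bar u$ or $\bar w$ is not a unit (with local data chosen there), this form is isotropic at every $\nu\notin S$: either $x\bar w$ is a non-square at $\nu$, or the form becomes $\langle\langle \bar w,\bar u\rangle\rangle$ with $\bar u,\bar w$ both $\nu$-units. Lemma \ref{weakapp} then gives the approximation. That argument is not in your proposal.

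\textbf{How the paper avoids the obstruction.} It keeps $a_{2\eta}$ free.
\begin{itemize}
\item It first builds $a_{1\eta}=a\pi_\eta^{\epsilon_1}$ and $\mu_{1\eta}=\theta_1\pi_\eta^{r_1}$ (times $\sqrt{a_{1\eta}}$ if $\epsilon_1=1$) by approximating residues only. No splitting of $(\bar b,\bar c)$ is imposed, since over a global function field every element is a reduced norm \cite[Theorem 33.5]{reiner}.
\item It lets $z_1$ be the residue of the unit $\lambda N(\mu_{1\eta})^{-1}$ and normalizes $N(y_{2P})=z_1$.
\item It takes $\bar a_2$ to be the norm of an element of $\kappa(\eta)(\sqrt{z_1})$ approximating elements of norm $\bar a_{2P}$. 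Then $(\bar a_2,z_1)$ is split globally, so $z_1$ is a global norm from $\kappa(\eta)(\sqrt{\bar a_2})$.
\item Weak approximation and Hensel's lemma then produce $a_{2\eta}$ and $\mu_{2\eta}$.
\end{itemize}

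\textbf{A separate problem in your $\epsilon_1=1$ branch.} The index drop via \cite[Lemma 4.7]{PPS} in Proposition \ref{codim1-even-odd} uses that $D\otimes F_{0\eta}$ is ramified and $\lambda$ has odd valuation in $F_\eta$. Here $F_\eta/F_{0\eta}$ is ramified, so $\lambda$ has even valuation in $F_\eta$, and $D\otimes F_{0\eta}=(b,c)\otimes F_\eta$ is unramified; no index reduction follows. Copying that construction with $\mu_{2\eta}=1$ would also violate v). The nodal $\mu_{2P}$ from Lemma \ref{choice-at-P-pi} can have norm $-1$ or $-w^{-1}$, and these are not squares in $F_{0P,\eta}$. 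The paper instead treats $\epsilon_1=1$ with the same recipe as $\epsilon_1=0$, including the adaptive choice of $a_{2\eta}$.
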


\begin{proof}  
By (\ref{dvr-class-4}) and (\ref{dvr-class-2}), we have 
$D\otimes F_\eta = (b, c)$ for some $b, c \in F_{0\eta}$ which are units at $\eta$. 
Let $D_{0\eta} = (b, c) $ and 
$u_0$, $b_0$, $c_0$  be the images of  $u, b, c $ in $\kappa(\eta)$. 

Write $r = 2r_1 + \epsilon_1$ for some $r_1 \in {\mathbb Z}$ and 
$\epsilon_1 \in \{ 0, 1\}$.  By the choice of $a_{1P}$, we have 
$a_{1P} = w_P^{\epsilon_P} \pi_{\eta}^{\epsilon_1}$
for some $w_P \in F_{0P, \eta}$ unit  at $\eta$.  
 Let $x \in \kappa(\eta)$ be close to 
$\bar{w}_P $ for all $P \in \PP_\eta$. 
Let $a \in F_{0\eta}$ which maps to $x$ in $\kappa(\eta)$ and $a_{1\eta} = 
a \pi_\eta^{\epsilon_1}$. Then $a_{1\eta} a_{1P} \in F_{0P, \eta}^{*2}$ for all 
$P \in \PP_\eta$.   Let $L_{1\eta} = F_{0\eta}(\sqrt{a_{1\eta}})$ and 
$L_{1P, \eta}  = F_{0P, \eta}(\sqrt{a_{1P}})$. Then $ L_{1\eta} \otimes F_{0P,\eta} = L_{1P, \eta}$. 
 Let   $\kappa(\eta)_1$  be the 
residue field of $L_{1\eta}$. Then $\kappa(\eta)_{1P}$ is the residue field of 
$F_{0P, \eta}(\sqrt{a_{1P}})$.

 Since $\mu_{2P} \in R_{0P}[X]/(X^2 - a_{2P})$
is a  unit  along $\eta$ and $ N_{L_{1P}/F_{0P}}(\mu_{1P}) N_{L_{2P}/F_{0P}}(\mu_{2P}) = \lambda$, 
it follows that  $\nu_\eta(N_{L_{1P}/F_{0P}}(\mu_{1P}))  = \nu_\eta(\lambda) = 2r_1 + \epsilon_1$. 

Suppose  $\epsilon_1 = 0$. Then $L_{1\eta}/F_{0, \eta}$ is unramified and $\pi_\eta$ is 
a parameter in $F_{0P, \eta}(\sqrt{a_{1P}}) $. Hence $\mu_{1P} = \theta_P \pi_{\eta}^{r_1}$ for some 
$\theta_P \in F_{0P, \eta}(\sqrt{a_{1P}})$ a unit at $\eta$.
 Suppose $ \epsilon_1 = 1$.  Then $L_{1\eta}/F_{0\eta}$ is 
ramified and $\sqrt{a_{1\eta}}$ is a parameter.  
Hence $\mu_{1P}  =  \theta_P\pi_\eta^{r_1} \sqrt{a_{1\eta}}$ for some 
$\theta_P \in F_{0P, \eta}(\sqrt{a_{1P}})$ a unit at $\eta$.
In both cases, let $\theta \in \kappa(\eta)_1$ close to $\bar{\theta}_P$ for all $P \in \PP_\eta$
and $\theta_1 \in L_{1\eta}$ which lifts $\theta$. 
If $\epsilon_1 = 0$, let $\mu_{1\eta} = \theta_1 \pi_\eta^{r_1}$ and 
if $\epsilon_1 = 1$, let $\mu_{1\eta} = \theta_1\pi_\eta^{r_1} \sqrt{a_{1\eta}}$. 
Then $\mu_{1\eta} \mu_{1P} \in L_{1\eta, P}^{*2}$ and 
 $\lambda N_{L_{1\eta}/F_{0\eta}}(\mu_{1\eta})^{-1}$ is a unit at $\eta$. 
 Since  $\nu(d)$ is odd, $F/F_0$ is ramified at $\eta$ and hence 
 $\mu_{1\eta} = \mu_{1\eta}' g_\eta^2$ for some $\mu_{1\eta}' \in  F\otimes L_{1\eta}$ 
 a unit at $\eta$ and $g_\eta \in  F\otimes L_{1\eta}$.  
 Since  $\kappa(\eta)_1$ is a global field of positive characteristic, 
 $\bar{\mu}_{1\eta}' \in \kappa(\eta)_1$ is a reduced norm from $ (b_0, c_0) \otimes \kappa(\eta)_1$ (cf. \cite[Theorem 33.5]{reiner}).
 Hence $\mu_{1\eta}$ is a reduced norm from $D \otimes L_{1\eta}$.

 Let $z_1 \in \kappa(\eta)$ be the image of  $\lambda N_{L_{1\eta}/F_{0\eta}}(\mu_{1\eta})^{-1}$ and
  $y_{2P}$ be the image of $\mu_{2P}$ in $ \kappa(\eta)_{2P} = \kappa(\eta)_P[X]/(X^2 - \bar{a}_{2P}).$
  By the choice of $\mu_{1\eta}$, it follows that 
  $N_{\kappa(\eta)_{2P}/\kappa(\eta)_P}(y_{2P})$ is close to $z_1$.
  Hence, replacing $y_{2P}$ by some element which is close to $y_{2P}$, we assume that 
  $N_{\kappa(\eta)_{2P}/\kappa(\eta)_P}(y_{2P}) = z_1$. In particular 
  the quaternion algebra $(\bar{a}_{2P}, z_1)$ is split over $\kappa(\eta)_P$ for all $P \in \PP_\eta$.
   Hence  $\bar{a}_{2P}$ is a norm from the extension $\kappa(\eta)_P[X]/(X^2 - z_1)$.
   Let $\tilde{a}_{2P} \in \kappa(\eta)_P[X]/(X^2 - z_1)$ with norm equal to $\bar{a}_{2P}$. 
   Let $\tilde{a}_2 \in \kappa(\eta)[X]/(X^2 - z_1)$ be   close to $\tilde{a}_{2P}$ for all $P \in \PP_\eta$
    and $\bar{a}_2$ be the norm of $\tilde{a}_2$. Then $\bar{a}_2$ is close to $\bar{a}_{2P}$ for all 
    $P \in \PP_\eta$.  Since the quaternion algebra $(\bar{a}_2, z_1)$ is split, $z_1$ is a norm from the 
    extension $\kappa(\eta)_2 = \kappa(\eta)[X]/(X^2 - \bar{a}_2)$. 
  
  Then, there exists $y_2 \in \kappa(\eta)_2$
  which is close to $y_{2P}$ for all $P \in \PP_\eta$ such that 
  $N_{\kappa(\eta)_2/\kappa(\eta)}(y_{2}) = z_1$ since $\kappa(\eta)_2$ is a global field. 
  Let $a_{2\eta} \in F_{0\eta}$ be a  lift of $\bar{a}_2 \in \kappa(\eta)$ and 
  $\mu_{2\eta} \in L_{2\eta} = F_{0\eta}[X]/(X^2 - a_{2\eta})$ be such that
  $N_{L_{2\eta}/F_{0\eta}}(\mu_{2\eta})= \lambda N_{L_{1\eta}/F_{0\eta}}(\mu_{1\eta})^{-1}$. 
  Since $\mu_{2\eta}$ is a unit at $\eta$ and $D$ is unramified at 
  $\eta$, as above, $\mu_{2\eta}$ is a reduced norm from $D\otimes L_{2\eta}$. 

   Hence $a_{1\eta} $, $a_{2\eta} $, $\mu_{1\eta} $ and $\mu_{2\eta} $  have the required properties. 
  \end{proof}
  
  \begin{prop} 
\label{curve-point-choice} Let $P \in \XX_0$ be a closed point. Suppose that 
$P \not\in \PP_0$.  Let $\eta \in D$ be the unique codimension one point with $P \in \overline{ \{ \eta \} }$.
Let $ a_{i\eta} \in F_{0\eta}$ and 
$\mu_{i\eta} \in L_{i\eta} = F_{0\eta}[X]/(X^2 - a_{i\eta})$   be as in 
(\ref{codim1-even-even},   \ref{codim1-even-odd}, \ref{codim1-d=pi}). 
Then there exist $a_{iP} \in F_{0P}$ and $\mu_{iP} \in L_{iP} = F_{0P}[X]/(X^2 - a_{iP})$ 
such that 

i) $\prod_i N_{L_{iP}/F_{0P}}(\mu_{iP}) = \lambda$

ii) $\mu_{iP}$ is a reduced norm from $D\otimes L_{iP}$  

iii) $a_{i\eta}a_{iP} \in F_{0P, \eta}^{*2}$  

iv) there is an isomorphism 
$$\phi_{iP, \eta} : F_{0P,\eta}[X]/(X^2 - a_{i\eta}) \to  F_{0P,\eta}[X]/(X^2 - a_{iP})$$
 such that 
$$\phi_{iP, \eta}(\mu_{i\eta} ) \mu_{iP}^{-1}   \in    (F_{0P, \eta}[X]/(X^2 - a_{iP}) )^{2^m}$$ 
for all $m \geq 1$.  
\end{prop}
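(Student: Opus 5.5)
The plan is to reduce to Proposition \ref{curve-point-choice0}, applied to the complete two-dimensional regular local ring $\hat{R}_{0P}$ with $\pi = \pi_\eta$ a parameter defining $\eta$ at $P$. We take the data $a_{i\eta}\otimes F_{0P,\eta}$ and $\mu_{i\eta}\otimes F_{0P,\eta}$ as the input $a_{i\pi}$, $\mu_{i\pi}$ there; here $n = 3$ or $n = 2$, according to which of \ref{codim1-even-even}, \ref{codim1-even-odd}, \ref{codim1-d=pi} produced them.

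First I set up local coordinates. Since $P \notin \PP_0$, exactly one codimension one point of $\DD$ passes through $P$, namely $\eta$. Hence we may write $m_P = (\pi_\eta, \delta_P)$, where $\delta_P$ is a regular parameter not lying in $\DD$. Then the following hold at $P$:
\begin{itemize}
\item $\lambda = u_P\pi_\eta^{r}$ with $u_P \in \hat{R}_{0P}^*$, because the support of $\lambda$ lies in $\DD$;
\item $d = v\pi_\eta^{e}$ up to squares, with $v$ a unit;
\item $D$ is unramified on $\hat{R}_{0P}$ (or on its integral closure in $F_P$) except possibly along $\pi_\eta$.
\end{itemize}
The hypotheses of \ref{curve-point-choice0} then need checking:
\begin{itemize}
\item $\lambda$ has the shape $u\pi^r\delta^s$, with $s = 0$.
\item $\prod_i N(\mu_{i\eta}) = \lambda$ holds over $F_{0\eta}$, hence over $F_{0P,\eta}$.
\item Each $\mu_{i\eta}$ is a reduced norm from $D\otimes L_{i\eta}$, hence after base change to $F_{0P,\eta}$.
\item $D$ is ramified at most at $\pi$.
\item The case $d$ a square in $F_{0P}$ is already handled inside \ref{curve-point-choice0}. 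In the case $d$ not a square, $F_P/F_{0P}$ is one of the two shapes $d = w$ or $d = \pi$ used in Section \ref{section5}.
\end{itemize}
Thus \ref{curve-point-choice0} yields $a_{iP}$ and $\mu_{iP}$ satisfying i)–iv) of the statement.

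One subtlety is the distinction between $F_{0P}$ and $F_{0\eta}$. Proposition \ref{curve-point-choice0} takes input over $F_{0\pi}$, the completion of the fraction field of $\hat{R}_{0P}$ at $\pi$, which here is $F_{0P,\eta}$. So the input must be base-changed, which is harmless. Conditions iii) and iv) of the statement are then exactly conditions iii) and iv) of \ref{curve-point-choice0}, with $\phi_{iP,\eta}$ the isomorphism given there (inverted if necessary). This works because the relation $a_{i\eta}a_{iP} \in F_{0P,\eta}^{*2}$ identifies the two quadratic algebras.

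I expect the main obstacle to be verifying the reduced norm condition ii) over $F_{0P}$ rather than only over $F_{0P,\eta}$. In \ref{curve-point-choice0} this is handled in one of two ways:
\begin{itemize}
\item When $d$ is not a square, $D\otimes F_{0P}$ splits by \ref{curve-point}, using the splitting of $D\otimes F_{0P}$ as $D_{0P}\otimes F_P$ from \ref{2dim-class}.
\item When $d$ is a square, we reduce to a quaternion algebra $D_{0P}$ and apply \ref{curve-points0-norms}. This requires $\mu_{iP}$ to be of the form $w\pi^r\delta^s\sqrt{a}^{s'}$, which is ensured by the construction in \ref{curve-points0}.
\end{itemize}
I would carefully check that the ramification hypothesis (at most at $\pi_\eta$) indeed follows from $P\notin\PP_0$ together with the normal crossings choice of $\XX_0$. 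I would also check that period $2$ gives $D\otimes F_{0P}$ of index at most $2$, as noted in Section \ref{section6}, so that the quaternion results apply.
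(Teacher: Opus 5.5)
Your proposal is correct and follows the paper's proof. The paper likewise observes that since $P \notin \PP_0$, the component $\eta$ is the only point of $\DD$ through $P$, so at $P$ the support of $d$, the support of $\lambda$ and the ramification of $D$ all lie along $\eta$; it then applies Proposition \ref{curve-point-choice0} directly. Your additional bookkeeping (base change of the $\eta$-data to $F_{0P,\eta}$, the normal form of $d$, and inverting the $2^m$-th power condition) only makes explicit steps the paper leaves implicit.
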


\begin{proof}  Let $\pi_P$ be a prime  defining $\eta$ at $P$. 
  Since there is a unique codimension one point in $\DD$, the support of $d$ at $P$
  and the ramification locus of at $P$  is at most $\eta$.
  Hence, by (\ref{curve-point-choice0}), we have the required  $a_{iP}$ and $\mu_{iP}$.
\end{proof}

\section{Choice of $U$}\label{section8}

 Let    $T$ be a   complete  discrete valuation ring with  field of fractions $K$ and 
  residue field  $k$. 
Let $F_0$ be the function field of a curve over $K$ and 
$F = F_0(\sqrt{d})$ a quadratic  etale extension.  
Let $D$ be a central division algebra over $F$ such that the period of $D$ is coprime to $\operatorname{char}(k)$.

 \begin{prop}\label{pro8.1} 
 \label{atu} Let $\XX_0$ be a normal proper model of 
 $F_0$ over $T$ and  $X_0$   the closed fibre of $\XX_0$.
 Let $\eta \in X_0$ be a codimension zero  point.   
 Let $\lambda \in F_0^* \cap Nrd(D)^*$,  $m \geq 2$ and $M \geq 1$. 
 Suppose  that for $1 \leq i \leq m$,  there exist  $a_{i\eta} \in F_{0\eta}$,
 $\mu_{i\eta} \in  L_{i\eta} = F_{0\eta}[X]/(X^2 -  a_{i\eta})$ such that 
 
 i) $\prod_1^m N_{L_{i\eta}/F_{0\eta}}(\mu_{i\eta}) = \lambda$
 
 ii) $\mu_{i\eta}$ is a reduced norm from $D\otimes L_{i\eta}$ for all $i$
 
 iii)  ind$(D\otimes L_{i\eta}) <  M$ for all $i$.
 
 Then there exist a non-empty open  proper subset  $U$ of $\overline{\{ \eta \}}$ and
 $a_{iU} \in F_{0U}$,
 $\mu_{iU} \in  L_{iU} = F_{0U}[X]/(X^2 - a_{iU}) $ such that 
 
 i) $a_{iU}a_{i\eta} \in F_{0\eta}^{*2}$
 
 ii)  there is an isomorphism  $$\phi_{i, U_\eta} :
  F_{0,\eta}[X]/(X^2 - a_{iU}) \to  F_{0,\eta}[X]/(X^2 - a_{i\eta})$$

 such that 

$$\phi_{i, \eta}(\mu_{iU} ) \mu_{i\eta}^{-1}   \in  (  F_{0P, \eta} [X]/(X^2 - a_{i\eta}))^{*2^m}$$
 for all $m \geq 1$ and  $i = 1, 2,  3$. 
 
 iii) $\prod_1^m N_{L_{iU}/F_{0U}}(\mu_{iU}) = \lambda$ for all $i$
 
 ii) $\mu_{iU}$ is a reduced norm from $D\otimes L_{iU}$ for all $i$
 
 iii)  ind$(D\otimes L_{iU}) <  M$ for all $i$. 

 \end{prop}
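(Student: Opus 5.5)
The plan is to approximate the data over $F_{0\eta}$ by data over $F_0$, and then spread out from the generic point $\eta$ to a small open neighbourhood $U$ of $\eta$ in $\overline{\{\eta\}}$. Here $F_{0U}$ is the field of fractions of the $t$-adic completion of the ring of functions regular on $U$, as in the patching setup of Harbater--Hartmann--Krashen. Recall that $F_{0\eta}$ is the completion of $F_0$ at the discrete valuation $\nu_\eta$, and that $F_0\subset F_{0U}\subset F_{0\eta}$.

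\textbf{Step 1: approximate the $a_{i\eta}$.} Since $F_0$ is dense in $F_{0\eta}$ and $F_{0\eta}^{*2}$ is open (the residue characteristic of $\nu_\eta$ is not $2$), we can pick $a_i\in F_0^*$ with $a_ia_{i\eta}\in F_{0\eta}^{*2}$. This fixes $L_i=F_0[X]/(X^2-a_i)$ and an isomorphism $L_i\otimes F_{0\eta}\simeq L_{i\eta}$. Put $a_{iU}=a_i$, so that $L_{iU}=L_i\otimes F_{0U}$.

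\textbf{Step 2: approximate the $\mu_{i\eta}$ for $i\ge 2$.} Again by density, choose $\mu_i\in L_i$ close to $\mu_{i\eta}$. For $i\ge2$ we want $\mu_i\mu_{i\eta}^{-1}$ to be a unit whose residue is close to $1$. Hensel's lemma then shows that it lies in $\bigcap_m (L_{i\eta})^{*2^m}$; the intersection over all $m$ is handled by taking $\mu_i$ congruent to $\mu_{i\eta}$ modulo a high power of the uniformizer, so that the ratio is a $1$-unit, and $1$-units are infinitely $2$-divisible because the residue characteristic is not $2$.

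\textbf{Step 3: force the product exactly equal to $\lambda$.} We cannot approximate all the $\mu_i$ independently, since the product must equal $\lambda$ on the nose. Set $\lambda'=\lambda\prod_{i\ge2}N(\mu_i)^{-1}\in F_0$; this element is close to $N_{L_{1\eta}/F_{0\eta}}(\mu_{1\eta})$. The task is then to find $\mu_1\in L_1\otimes F_{0U}$ with $N(\mu_1)=\lambda'$ and $\mu_1$ close to $\mu_{1\eta}$. Over $F_{0\eta}$, write $\lambda'=N(\mu_{1\eta})\,c$ with $c$ a $1$-unit. Then $c=N(c')$ with $c'=c^{1/2}\in F_{0\eta}$ a $1$-unit, so $\tilde\mu_{1\eta}=\mu_{1\eta}c'$ has norm exactly $\lambda'$ and is $2^m$-close to $\mu_{1\eta}$. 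Descending this to $F_{0U}$ for a small $U$ is the key point. The fibre $N^{-1}(\lambda')$ is a torsor under the norm-one torus $R^1_{L_1/F_0}\mathbb{G}_m$, and it has an $F_{0\eta}$-point. Since $F_{0\eta}$ is the filtered direct limit of the $F_{0U}$ over shrinking $U$ (up to the usual henselization and approximation argument of HHK, or Artin approximation on the completed local ring), the point can be approximated by an $F_{0U}$-point for some nonempty open $U$. The approximating point is again close to $\tilde\mu_{1\eta}$, which gives conditions i)--iii).

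\textbf{Step 4: reduced norms and index.} The bound on the index is a statement about the Brauer class of $D\otimes L_{iU}$. Since $D\otimes L_i$ is defined over $F_0$, spread a splitting or index-reducing field extension (or an explicit algebra isomorphism) realized over $F_{0\eta}$ to some $F_{0U}$ by the same limit argument, shrinking $U$ again. For the reduced norm condition, $\mu_{iU}$ is $2^m$-close to $\mu_{i\eta}$, so it is a reduced norm over $L_{i\eta}$, because reduced norms form an open subgroup containing high $2$-powers of $1$-units (the period is prime to the residue characteristic). The variety $\{x: \mathrm{Nrd}(x)=\mu_{iU}\}$ in $D\otimes L_{iU}$ therefore has an $L_{i\eta}$-point. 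Passing to the limit over $U$, and using that only finitely many $i$ occur so that a common $U$ can be taken, we obtain a point over $L_{iU}$. Finally, $U$ is chosen proper and nonempty by removing finitely many closed points.

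The main obstacle is Step 3 together with the descent in Step 4: passing from $F_{0\eta}$-points to $F_{0U}$-points. This requires knowing that $F_{0\eta}$ is, in the appropriate sense, a direct limit of the $F_{0U}$, or at least that these points can be approximated. My first attempt would use that the relevant varieties are smooth, and apply the implicit function theorem (Hensel) over the completion $\widehat{R}_U$, combined with density of $\bigcup_U F_{0U}$ in $F_{0\eta}$.
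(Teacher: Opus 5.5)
Your proposal follows essentially the same route as the paper's proof. You replace $a_{i\eta}$ by some $a_i\in F_0^*$, approximate all but one $\mu_{i\eta}$ by elements of $L_i$, and multiply the remaining one by a $1$-unit so that its norm is exactly $\lambda\prod N(\mu_i)^{-1}\in F_0$. You then descend that element to some $F_{0U}$, which is the step the paper simply cites \cite[Lemma 7.2]{PPS} for, and shrink $U$ for the index bound.

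One statement needs correcting: $F_{0\eta}$ is not itself the direct limit of the $F_{0U}$. What your argument actually uses is that $\varinjlim_U \widehat{R}_{0U}$ is a henselian discrete valuation ring with completion $\widehat{R}_{0\eta}$. For smooth varieties, points over its fraction field are then dense in the $F_{0\eta}$-points; this covers the norm fibre, the $\mathrm{Nrd}$-fibre and generalized Severi--Brauer varieties. That is exactly your implicit-function-theorem remark, and it also gives the reduced-norm condition over $L_{iU}$, which the paper leaves implicit.
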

 
\begin{proof} Since $F_{0\eta}$ is the completion of 
$F_0$ and char$(k)\neq 2$,  there exists $a_i \in F_0^*$ such that 
$a_ia_{i\eta} \in F_{0\eta}^{*2}$. Thus, replacing $a_{i\eta}$ by 
$a_{i}$, we assume that $a_{i\eta}  = a_i \in F_0^*$.

Since $L_{i\eta} = F_{0\eta}(\sqrt{a_i})$ is the completion of 
$L_i = F_0(\sqrt{a_i})$, there exists  $\mu_i \in L_i $ close to 
$\mu_{i\eta}$ in $L_{i\eta}$.   In particular $\theta_i = 
N_{L_i/F_0}(\mu_i)^{-1}N_{L_{i\eta}/F_{0\eta}}(\mu_{i\eta})$
is close to 1 in $F_{0\eta}$.  Then $\theta = \prod_1^{m-1} \theta_i$ is close to 1 in $F_{0\eta}$. 
Let $\lambda_1 = \lambda  (\prod_1^{m-1} N_{L_{i }/F_{0 }}(\mu_{i }) ) ^{-1} \in F_{0}$. 
 Since 
 $\prod_1^m N_{L_{i\eta}/F_{0\eta}}(\mu_{i\eta}) = \lambda$, 
 we have  
 $$  
N_{L_m/F_0}(\mu_{m\eta})  =  \lambda_1  \theta^{-1}.$$

Since $\theta^{-1} \in F_{0\eta} $ is close to $1$, 
$\theta^{-1} = N_{L_{m\eta}/F_{0\eta}}(\theta')$ for some $\theta' \in L_{m\eta}$
which is close to 1.  In particular $\theta'$ is a reduced norm from $D\otimes L_{i\eta}$. 
 Hence replacing 
   $\mu_{m\eta}$ by $\mu_{m \eta}\theta'$, 
   we assume that 
    $$  
 N_{L_m/F_0}(\mu_{m\eta}) = \lambda_1.$$Hence, by (\cite[Lemma 7.2]{PPS}), there exists a nonempty proper open subset $U_0 $ of 
$\overline{  \{ \eta \} }$  and $\mu_{mU_0} \in L_m \otimes F_{0U_0}$ 
such that   $\lambda_1 = N_{L_m/F_0}(\mu_{mU_0})$ and $\mu_{mU_0}$ is close to 
$ \mu_{m\eta} $ in $L_m \otimes F_{0\eta}$.  

Since  ind$(D\otimes L_{i\eta}) <  M$ for all $i$, there exist 
nonempty proper open subsets $U_i$ of  $\overline{  \{ \eta \} }$ 
such that  ind$(D\otimes L_{i U}) <  M$ for all $i$. 

Then  $U  = (\cap_i U_i) \cap U$, $a_{iU } = a_i$ for all $i$, 
$\mu_{iU } = \mu_i$ for $1\leq i \leq m-1$ and $\mu_{mU } =  \mu_{mU_0}$
have the required properties. 
\end{proof}

\section{The main theorem}\label{section9}

\begin{theorem}\label{thm9.1}
Let  $p \geq  3$ be  a prime and  $K$ be a  $p$-adic field.
Let $F_0$ be the function field of a curve over $K$ and 
$F = F_0(\sqrt{d})$ a quadratic field extension.  
Let $D$ be a central division algebra over $F$ with a $F/F_0$-involution. 
Suppose that period of $D$ is 2.  
Let $\lambda \in F_0^* \cap Nrd(D)^*$.  Then there exist $a_i \in F_0^*$ and 
$\mu_i \in L_i = F_0[X]/(X^2 - a_i)$ for $i = 1, 2, 3$ such that 

i) $\prod_i N_{L_i/F_0}(\mu_i) = \lambda$

ii) $\mu_i$ is a reduced norm from $D\otimes L_i$ for $i = 1, 2, 3$

iii) ind$(D\otimes L_i) \leq 2$.
\end{theorem}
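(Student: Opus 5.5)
The plan is to patch the local data built in Sections \ref{section6}--\ref{section8} over a suitable regular model. Choose $\XX_0$ as in Section \ref{nodalpoints}. We may assume $\lambda$ is not a square times something trivial; the choice of $\XX_0$ is made after fixing $\lambda$. Then the set $\DD$ contains the support of $d$ and $\lambda$, the ramification locus of $D$, and the closed fibre $X_0$, all as regular curves with normal crossings. Let $\PP_0$ be the finite set of nodal points.

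For each $P\in\PP_0$, take $a_{iP},\mu_{iP}$ from (\ref{closed-d-unit}) when $d$ is a unit at $P$ up to squares. Otherwise take them from (\ref{closed-d=pi}), setting $a_{3P}=da_{2P}$ and $\mu_{3P}=1$. In the case $\nu_\eta(d)$ odd we use only two fields and take $L_3$ split with $\mu_3=1$.

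For each codimension zero point $\eta$ of $X_0$, apply (\ref{codim1-even-even}), (\ref{codim1-even-odd}) or (\ref{codim1-d=pi}) according to the parities of $\nu_\eta(d)$ and $\nu_\eta(\lambda)$. This produces $a_{i\eta},\mu_{i\eta}$ compatible with the $a_{iP},\mu_{iP}$ up to squares and $2^m$-th powers on the branches $F_{0P,\eta}$, with ind$(D\otimes L_{i\eta})\le 2$. Next apply Proposition \ref{atu} with $M=3$ to obtain nonempty open $U_\eta\subset\overline{\{\eta\}}$ and data $a_{iU},\mu_{iU}$ with the same properties. Shrink the $U_\eta$ so that they avoid $\PP_0$ and are disjoint. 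Enlarge $\PP_0$ to a finite set $\PP$ containing the complement of $\bigcup U_\eta$. At the new points $P\in\PP\setminus\PP_0$, use (\ref{curve-point-choice}) to obtain data compatible with $\eta$.

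The patching step comes next. The quadratic \'etale algebras are determined by classes in $H^1(\,\cdot\,,\Z/2)$. On each branch $F_{0P,\eta}$ the classes $a_{iU}$ and $a_{iP}$ agree up to squares. The patching exact sequence for $H^1$ with $\mu_2$-coefficients (local--global for quadratic \'etale algebras over the HHK patching system $\{F_{0U},F_{0P}\}$) therefore yields $a_i\in F_0^*$ with $a_i a_{iU}$ and $a_i a_{iP}$ squares, so that $L_i\otimes F_{0\xi}\simeq L_{i\xi}$ for all patches $\xi$.

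For the $\mu_i$ we need elements of $L_i^*$ with prescribed local classes and exact product of norms $\lambda$. Consider the torus $T=\{(\mu_1,\mu_2,\mu_3):\prod N_{L_i/F_0}(\mu_i)=\lambda\}$; it is a torsor under the norm-one kernel $T_0$ of $\prod R_{L_i/F_0}\mathbb G_m\to\mathbb G_m$. On each branch the local points differ by elements that are $2^m$-th powers for all $m$, hence close to $1$. This should let us adjust by the factorization property of HHK for the rational connected group $T_0$. The adjustment must be arranged so that the product-of-norms condition is maintained, for example by correcting $\mu_3$ via norms close to $1$ as in the proof of (\ref{atu}). Factorization then gives $\mu_i\in L_i$ with $\prod N(\mu_i)=\lambda$, each $\mu_i$ locally equal to $\mu_{i\xi}$ times a local reduced norm of $D\otimes L_{i\xi}$.

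The conditions ii) and iii) are then handled by local--global principles. The index bound ind$(D\otimes L_i)\le2$ follows from the local bounds via the HHK local--global principle for index. Being a reduced norm from $D\otimes L_i$, which has index $\le 2$, reduces to a quaternion norm form. The local--global principle for reduced norms of quaternion algebras over such patches (\cite{PPS}, $p\ne2$) then gives ii).

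The main obstacle is the factorization for $T_0$ while preserving the exact norm equation. $T_0$ need not be rational, and the statement is only for the specific data. I would first try to reduce to correcting a single coordinate, so that factorization for the rational group $R^1_{L_3/F_0}\mathbb G_m$, or rather for a quasi-split torus, suffices.
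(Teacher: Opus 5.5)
\textbf{There is a genuine gap.} Your construction of the local data, the patching of the classes $a_i$, and the index bound via the HHK local--global principle all follow the paper. The gap is in the step that carries the real content: producing global $\mu_1,\mu_2,\mu_3$ with $\prod_i N_{L_i/F_0}(\mu_i)=\lambda$ \emph{exactly}.

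You first appeal to simultaneous factorization for $T_0$ as a ``rational connected group''. You then concede that $T_0$ need not be rational, so as written there is no valid patching step. Your proposed repair, correcting $\mu_3$ by norms close to $1$ as in (\ref{atu}), is not an argument either. Closeness is meaningful on the complete discretely valued fields $F_{0\eta}$ and $F_{0P,\eta}$. But after factoring you must solve the norm equation exactly on the patch fields $F_{0U}$ and $F_{0P}$. There the error is the norm of factors in $L_i\otimes F_{0U}$ and $L_i\otimes F_{0P}$, which your argument does not control. Moreover, the local data only tell you that branch discrepancies are $2^m$-th powers for every $m$. This does not mean close to $1$, and it is exactly this power structure that has to be exploited.

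Concretely, the paper treats one coordinate at a time.
\begin{itemize}
\item Fix $t$ large relative to the $2$-power roots of unity in the residue fields $\kappa(P)$, $P\in\PP$. On each branch write $\mu_{iP}\mu_{iU}^{-1}=\beta_{iU,P}^{2^{2t}}$.
\item For $i=1,2$ separately, factor the \emph{root} $\beta_{iU,P}=\beta_{iU}\beta_{iP}$ using only the rational torus $R_{L_i/F_0}\mathbb{G}_m$. Patching then gives $\mu_i\in L_i$ with $\mu_i=\mu_{iP}\beta_{iP}^{-2^{2t}}=\mu_{iU}\beta_{iU}^{2^{2t}}$. So on every patch $\zeta$ the error is an exact $2^{2t}$-th power.
\item Consequently $\lambda_1=\lambda N(\mu_1)^{-1}N(\mu_2)^{-1}$ equals $N_{L_{3\zeta}/F_{0\zeta}}(x_\zeta^{2^{2t-1}}\mu_{3\zeta})$ for some $x_\zeta\in F_{0\zeta}$. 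This holds because $N(\beta^{2^{2t}})=N(\beta)^{2^{2t}}$ and $c^{2^{2t}}=N_{L_3}(c^{2^{2t-1}})$ for $c$ in the base.
\item The new local element is still a reduced norm from $D\otimes L_{3\zeta}$, being a square times a reduced norm with index at most $2$. Its branch ratios are norm-one $2^{2t-1}$-th powers.
\end{itemize}
Only at this point can $\mu_3$ with $N(\mu_3)=\lambda_1$ be patched, using a rational group such as $R^1_{L_3/F_0}\mathbb{G}_m$ together with the local--global principle for reduced norms. This is the single-coordinate reduction you gesture at but do not carry out.

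Separately, your choice of ``$L_3$ split, $\mu_3=1$'' on a component $\eta$ with $\nu_\eta(d)$ odd is inconsistent with $a_{3P}=da_{2P}$ at $P\in\PP_\eta$. The latter has odd $\eta$-valuation, so $a_{3\eta}a_{3P}$ is not a square on the branch and $a_3$ cannot be patched. The paper takes $a_{3\eta}=da_{2\eta}$ and $\mu_{3\eta}=1$ there.
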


\begin{proof} Let $T$ be the  valuation ring of $K$ and $k$ the residue field of $K$.
Let $\XX_0$ be a regular proper model of $F_0$ over $T$ with  the union of the 
ramification locus of $D$, support of $d$, support of $\lambda$
 and the closed fibre $X_0$ of $\XX_0$   is a union of regular 
curves with  normal crossings. Further the integral closure $\XX$
of $\XX_0$ in $F$ is a  is a regular proper model of $F$.  

Let $\DD$ be the set of codimension one points of $\XX_0$ consisting of 
support of $d$, support of $\lambda$, the closed fibre $X_0$ and 
the ramification locus of $D$ on $\XX_0$.   Let $P \in \XX_0$ be a
closed point. Then, by the choice of  $\XX_0$, there exist at most two codimension one 
points of $\XX_0$ which are in $\DD$ and passes through $P$.
Further, since  
$\XX$ is regular,  there exists at most one codimension one point $\eta$ of $\XX_0$ 
passing through $P$ such that 
$\nu_\eta(d)$ is odd. 

Let $\PP_0$ be the  finite set of closed points of $\XX_0$  consisting of  points of the 
intersection of   the closures of  any two distinct  codimension  one   points  in   $\DD$.  

Let $P \in \PP_0$ and $\eta_1, \eta_2 \in \DD$ with $P \in \overline{ \{ \eta_1 \}} \cap 
\overline{ \{ \eta_2 \}}$.  If $\nu_1(d)$ and $\nu_2(d)$ are even, then let 
$a_{iP}, \mu_{iP}$ for $i = 1, 2, 3$ be as in (\ref{closed-d-unit}). If either 
$\nu_1(d)$ or $\nu_2(d)$ is odd, let $a_{iP}, \mu_{iP} $ for $i = 1, 2$ be as in (\ref{closed-d=pi})
and $a_{3P} = d a_{2P}$, $\mu_{3P} = 1$.

Let $\eta \in X_0 $ be a codimension zero point.
If $\nu(d)$ and $\nu(\lambda)$ are even, then let $a_{i\eta}, \mu_{i\eta}$ be 
as in (\ref{codim1-even-even}) for $i = 1, 2, 3$. If $\nu(d)$  is even and $\nu(\lambda)$ is odd,
 then let $a_{i\eta}, \mu_{i\eta}$ be  as in (\ref{codim1-even-odd}) for $i = 1, 2, 3$.
If $\nu(d)$  is odd, then let $a_{i\eta}, \mu_{i\eta}$ be  as in (\ref{codim1-d=pi}) for $i = 1, 2$
and $a_{3\eta} = a_{2\eta} d$, $\mu_{3\eta} = 1$.

Let $U_\eta$, $a_{iU_\eta}$ and $\mu_{iU_\eta}$ be as in (\ref{atu}). 
If necessary,     replacing each $U_\eta$ by a open subset of $U_\eta$, 
we assume that $\PP_0 \cap U_\eta = \emptyset$.  Let $\UU = \{ U_\eta \}$. 

Let $\PP = X_0 \setminus \cup_\eta U_\eta$. Then  $\PP_0$ is a finite set of 
closed points of  $\PP_0 \subseteq \PP$.

Let $P \in \PP \setminus \PP_0$. Then there is a unique codimension one point $\eta \in \DD$.
Let $a_{iP}$ and $\mu_{iP}$ for $i = 1, 2, 3$  be as in (\ref{curve-point-choice}).

 Let $P \in \PP$ and $U \in \UU$ with $P \in \overline{ \{\eta \}}$.
 Then, by the choice of $a_{iP}$ and $a_{iU}$ we have 
 $a_{iP} = \theta_{iP,\eta}^2 a_{iU} $ for some $\theta_{iP, U} \in F_{0U, P}^*$.
 Hence, there exist $\theta_{iP} \in F_{0P}^*$ and $ \theta_{iU} \in F_{0U}^*$ such that
 $\theta_{iP,\eta} = \theta_{iP} \theta_{iU}$ (cf. \cite[The proof of Proposition 7.4]{PPS}). Thus  $a_{iP}\theta_{iP}^{-2}  = a_{iU}\theta_{iU}^2$ for all branches $(U, P)$.
 Hence there exist $a_i \in F_0^*$ such that $a_i = a_{iP} \in F_{0P}^*/F_{0P}^{*2}$ 
 and $a_i = a_{iU} \in  F_{0U}^*/F_{0U}^{*2}$ .  Let $L_i = F_0[X]/(X^2 - a_i)$.
 Then, by (\cite[Theorem 5.1]{h2009}), ind$(D\otimes L_i) \leq 2$ for all $i$. 
 
Let $P \in X_0$ be a closed. Since $\kappa(P)$ is a finite field, there exists 
$t_P \geq 2$ such that $\kappa(P)$ has no   $2^{t_p}$th primitive root of unity.  
Let $t > 2t_P $ for all $P \in \PP$.

Let $P \in \PP$. 
 We have $\mu_{iP} \in F_{0P}[X]/(X^2 - a_{i})$ and 
  $\mu_{iU} \in F_{0U}[X]/(X^2 - a_{i})$ such that 
  $\mu_{iP}\mu_{iU}^{-1} \in (F_{0U, P}[X]/(X^2 - a_i))^{*2^m}$ for all $m \geq 1$. 
 Hence    $\mu_{iP} = \mu_{iU} \beta_{iU, P}^{2^{2t}}$ for some $\beta_{iU, P} \in L_i \otimes F_{0U, P}$. There exist  $\beta_{iP}  \in L_i \otimes F_{0P}$ and 
  $\beta_{iU} \in L_i \otimes F_{0U}$ such that 
  $\beta_{iU, P}  = \beta_{iU} \beta_{iP}$ (cf. \cite[The proof of Proposition 7.4]{PPS}).
  In particular we have $\mu_{iP} \beta_{iP} ^{-2^{2t}} = \mu_{iU}\beta_{iU}^{2^{2t}}$
   for all branches $(U, P)$. Hence, by (\cite[Proposition 6.3]{h2010}), there exist $\mu_i \in L_i$ such that 
  $\mu_i = \mu_{iP} \beta_{iP} ^{-2^{2t}} = \mu_{iU}\beta_{iU}^{2^{2t}}$.
  
  Let $\lambda_1= \lambda N_{L_1/F_0}(\mu_1)^{-1}N_{L_2/F_0}(\mu_2)^{-1}$. 
 For $\zeta \in \PP \cup \UU$,  we have 
  $$
  \begin{array}{rcl}
  \lambda_1 & = &\lambda N_{L_1/F_0}(\mu_1)^{-1}N_{L_2/F_0}(\mu_2)^{-1} \\
   &  =  &  N_{L_{1\zeta}/F_{0\zeta}}(\mu_{1\zeta})
  N_{L_{2\zeta}/F_{0\zeta}}(\mu_{2\zeta})N_{L_{3\zeta}/F_{0\zeta}}(\mu_{3\zeta})N_{L_{1}/F_{0}}(\mu_{1})^{-1} N_{L_{1}/F_{0}}(\mu_{2})^{-1} \cr
  & = &  N_{L_{1\zeta}/F_{0\zeta}}(\mu_{1\zeta} \mu_1^{-1})
  N_{L_{2\zeta}/F_{0\zeta}}(\mu_{2\zeta} \mu_2^{-1})N_{L_{3\zeta}/F_{0\zeta}}(\mu_{3\zeta}) 
  \end{array}
  $$
 Since $ N_{L_{1\zeta}/F_{0\zeta}}(\mu_{1\zeta} \mu_1^{-1})
  N_{L_{2\zeta}/F_{0\zeta}}(\mu_{2\zeta} \mu_2^{-1}) = x_\zeta^{2^{2t}}$ for some $ x_\zeta \in 
  F_{0\zeta}$, 
  we have 
  $\lambda_1 = N_{L_{3\zeta}/F_{0\zeta}}( x_\zeta^{2^{2t-1}} \mu_{3\zeta})$. 
  Since ind$(D\otimes L_{3\zeta}) \leq 2$ and 
   $\mu_{3\zeta}$ is a reduced norm from $D \otimes L_{3\zeta}$, 
   $x_\zeta^{2^{2t-1}} \mu_{3\zeta}$ is a reduced norm from $D\otimes L_{3\zeta}$. 
   Further, for every branch $(U, P)$, we have 
   $x_P^{2^{2t-1}} \mu_{3P}x_U^{-2^{2t-1}} \mu_{3U}^{-1} \in (F_{0U, P}[X]/(X^2 - a_i))^{*2^{2t-1}}$. 
  
  Replacing $\mu_{3\zeta}$ by $x_\zeta^2 \mu_{3\zeta}$ we assume that 
  $N_{L_{3\zeta}}(\mu_{3\zeta}) = \lambda_1$, 
 $  \mu_{3P}  \mu_{3U}^{-1} \in (F_{0U, P}[X]/(X^2 - a_3))^{*2^{2t-1}}$ 
   and $\mu_{3\zeta}$ is a reduced norm from 
  $D\otimes L_{3\zeta}$ for all $\zeta \in \PP \cup \UU$. 
  
Hence, as in (\cite[Proposition 6.3]{h2010}, \cite[Theorem 3.2.3]{h2014}), there exists $\mu_3 \in L_3 = F_{0}[X]/(X^2 - a_3)$
 such that $N_{L_3/F_0}(\mu_3) = \lambda_1$ and $\mu_3$ is a reduced norm from 
 $D\otimes L_3$.
 
 Therefore $a_i$ and $\mu_i$ have the required properties. 
\end{proof}
\begin{theorem} \label{thm9.2}
        Let $F_0$ be the function field of a $p$-adic curve. Let $F$ be a quadratic field extension of $F_0$. Let $A$ be a central simple $F$-algebra of the exponent $2$. If $A$ has a $F/F_0$-involution $\tau$ and $p \neq 2$, then $SK_1U(\tau, A)$ is trivial. 
\end{theorem}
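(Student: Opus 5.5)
We plan to deduce Theorem \ref{thm9.2} from Theorem \ref{thm9.1} together with the classical triviality of $SK_1U$ for algebras of index at most $2$. First reduce to a division algebra. By the discussion in the introduction (\cite[Lemma 2]{1974simple}), $SK_1U(A,\tau)\cong SK_1U(D,\tau')$ where $A\cong M_n(D)$ with $D$ division and $\tau'$ unitary on $D$ with the same restriction to $F$. Since $D$ is Brauer equivalent to $A$, it has period $2$. So we may assume $A=D$ is a division algebra of period $2$ with an $F/F_0$-involution $\tau$. Note $p\neq 2$ gives $p\geq 3$, so Theorem \ref{thm9.1} applies. (If the case $p=3$ conflicts with the standing assumption $p>3$ of Section \ref{section2}, the later sections already state $p\geq 3$.)

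Next take $a\in\Sigma'_\tau(D^*)$ and set $\lambda=Nrd_{D/F}(a)\in F_0^*\cap Nrd(D^*)$. Theorem \ref{thm9.1} gives $a_i\in F_0^*$ and $\mu_i\in L_i=F_0[X]/(X^2-a_i)$, $i=1,2,3$, with $\prod_i N_{L_i/F_0}(\mu_i)=\lambda$, each $\mu_i\in Nrd(D\otimes L_i)$, and ind$(D\otimes_{F_0}L_i)\leq 2$. The key step is the following norm principle for the group $\Sigma'_\tau/\Sigma_\tau$. For a quadratic extension $L/F_0$ (or split étale $L$), and $b\in (D\otimes_{F_0}L)^*$ with $Nrd(b)=\mu\in L$, we want $c\in\Sigma'_\tau(D^*)$ with $Nrd_{D/F}(c)=N_{L/F_0}(\mu)$, and $c$ lying in the same class as a product of symmetric elements modulo $\Sigma_\tau$.

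Concretely, we would use the known identification (Yanchevskii, \cite{1974simple}, \cite{Y1979}) that $SK_1U(D,\tau)$ is isomorphic to $\Sigma'_\tau(D^*)/\Sigma_\tau(D^*)$ and is detected by the reduced norm. That is, $a\in\Sigma_\tau$ if and only if $Nrd(a)$ lies in the subgroup of $F_0^*$ generated by $N_{L/F_0}(Nrd((D\otimes L)^*)\cap L^*)$ over quadratic $L/F_0$ with $D\otimes L$ of small index and $\tau$ extended. Since ind$(D\otimes L_i)\leq 2$ and $SK_1U$ is trivial for index $\leq 2$ (\cite{1974simple}), each $\mu_i$ is the reduced norm of an element of $\Sigma_{\tau\otimes\sigma_i}((D\otimes L_i)^*)$, where $\tau\otimes\sigma_i$ is a unitary involution of $D\otimes_{F_0}L_i$ over $L_i$. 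We then apply a transfer/corestriction (norm) map $\Sigma_{\tau\otimes\sigma_i}((D\otimes L_i)^*)\to\Sigma_\tau(D^*)/[\ ,\ ]$ compatible with reduced norms, and multiply the three contributions. This produces $s\in\Sigma_\tau(D^*)$ with $Nrd(s)=\lambda=Nrd(a)$. Then $as^{-1}$ has reduced norm $1$, and it remains to show $\Sigma_\tau$ contains $SL_1(D)$. For period-$2$ algebras over this field, $SK_1(D)$ is trivial by the analogue of \cite{bh}, and $[D^*,D^*]\subseteq\Sigma_\tau(D^*)$ holds by the standard unitary identities.

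We expect the main obstacle to be this transfer step: constructing a norm map on $SK_1U$ from $D\otimes L_i$ down to $D$ that commutes with reduced norms and sends symmetric elements into $\Sigma_\tau$. We would first try Yanchevskii's description of $\Sigma_\tau$ via reduced norms of symmetric elements. In index $\leq 2$, every element of $Nrd\cap L_i$ is a product of reduced norms of $\tau$-symmetric elements. We would then realize $L_i$ inside $D$ (or inside $M_2(D)$ via $\tau$-symmetric embeddings), so that $N_{L_i/F_0}$ becomes a reduced norm of a symmetric element.
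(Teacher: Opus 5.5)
Your outline follows the paper's proof step by step: reduce to a division algebra, apply Theorem \ref{thm9.1} to $\lambda=Nrd(a)$, use triviality of $SK_1U$ for $D\otimes_{F_0}L_i$ of index $\le 2$, apply a norm principle to land in $\Sigma_\tau(D^*)$, and finish with $SL_1(D)=[D^*,D^*]\subseteq\Sigma_\tau(D^*)$. The gap is in the last step, which rests on a reference that does not apply. You get $SK_1(D)=1$ ``by the analogue of \cite{bh}''. That theorem is for prime exponent $\ell\neq 2$ and needs primitive $\ell^2$-th roots of unity, so it gives nothing in period $2$. Likewise, your claim that $SK_1U$ is ``detected by the reduced norm'' is not a general theorem of Yanchevskii; it holds here only because $SK_1(D)=1$. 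The paper's justification is the one you need. By Saltman \cite{salt}, $\operatorname{ind}(D)$ divides $4$, so $D$ is quaternion or biquaternion. Since $cd(F)\le 3$, this gives $SK_1(D)=1$: use Wang \cite{Wang} in index $2$, and Rost's theorem (cf.\ \cite{Merk}) for biquaternions, where $SK_1(D)$ embeds in $H^4(F,\mathbb{Z}/2)=0$. Without this argument, the conclusion $as^{-1}\in\Sigma_\tau(D^*)$ is unsupported.

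The transfer step you single out as the main obstacle is the norm principle the paper quotes from \cite[Proposition 4.3]{ER}, so you can simply cite it. Your $M_2(D)$ idea also proves what you need; note that ``realizing $L_i$ inside $D$'' is not available in general. Let $\sigma$ be the adjoint involution on $\mathrm{End}_{F_0}(L_i)\cong M_2(F_0)$ of the trace form $(x,y)\mapsto Tr_{L_i/F_0}(xy)$. Multiplications by elements of $L_i$ are $\sigma$-symmetric, so $\tau\otimes\mathrm{id}$ on $D\otimes_{F_0}L_i$ is the restriction of $\tau\otimes\sigma$ on $D\otimes_{F_0}\mathrm{End}_{F_0}(L_i)\cong M_2(D)$. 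On this centralizer, $Nrd_{M_2(D)}=N_{F\otimes L_i/F}\circ Nrd_{D\otimes L_i}$. Write $(\tau\otimes\sigma)(x)=g^{-1}x^*g$, where $g$ is the Gram matrix (hermitian, with entries in $F_0$); then $x$ is symmetric iff $gx$ is hermitian. Diagonalizing an invertible hermitian $H=P^*\operatorname{diag}(\beta_1,\beta_2)P$ gives $Nrd(H)=Nrd_D(\beta_1\beta_2\,\tau(\delta)\delta)$, where $\delta$ represents the Dieudonn\'e determinant of $P$. Hence $Nrd(\Sigma_{\tau\otimes\sigma}(M_2(D)^*))\subseteq Nrd(\Sigma_\tau(D^*))$. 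This suffices, because you only need some $s\in\Sigma_\tau(D^*)$ with $Nrd(s)=\lambda$.
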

\begin{proof}
    By (\cite[Lemma 2]{1974simple}), it can be reduced to the case that $A$ is a central division algebra over $F$. Choose an element $a\in \Sigma_{\tau}'(A^*)$ arbitrarily and write $\lambda=Nrd_{A/F}(a)$. Then $\lambda \in F_0^* \cap Nrd(D)^*$. 
    
    By Theorem \ref{thm9.1}, there are extensions $L_i$ of $F$ satisfying $ind(A \otimes_{F_0} L_i)\leq   2$ for $i=1, 2, 3$. Let $\widetilde{L_i}=L_i \otimes_{F_0} F$ and $\widetilde{A_i}=A \otimes_{F_0} L_i=A \otimes_F \widetilde{L_i}$ for  $i=1, 2, 3$. Considering the elements $\mu_1$, $\mu_2$, and $\mu_3$ founded in Theorem \ref{thm9.1}, let $Nrd_{\widetilde{A_i} /\widetilde{L_i}}(\widetilde{d_i})=\mu_i$ for $i=1, 2, 3$ and some $\widetilde{d_i} \in \widetilde{A_i}^*$. 
    
    Since $SUK_1(\widetilde{A_i}, \tau \otimes id)$ is trivial (cf. \cite[Proposition 17.27]{knus})     and $\mu_i \in L_i$ for $i=1, 2, 3$, we have $\widetilde{d_i} \in \Sigma_{\tau \otimes id}(\widetilde{A_i}^*)$  . By the norm principle (\cite[Proposition 4.3]{ER}), $N_{L_i/F_0}(\mu_i)=Nrd_{A/F}(d_i)$  for some $d_i \in \Sigma_{\tau}(A^*)$ where $i=1, 2,$or $ 3$ . Therefore, by Theorem \ref{thm9.1}, $\lambda= \prod_i N_{L_i/F_0}(\mu_i) = Nrd_{A/F}(\prod_i d_i)=Nrd_{A/F}(a)$.

    Since $ind(D)\leq 4$ and $cd(F) \leq 3$, $SK_1(A)$ is trivial (cf. \cite[Theorem 16.6]{pierce},  \cite{Wang}, \cite[Chapter 17]{knus}, \cite{Merk} ). Then $a^{-1} \cdot \prod_i d_i \in SL_1(A)=[A^*, A^*] \subset \Sigma_{\tau}(A^*)$ (cf. \cite[Proposition 17.26]{knus}). Since $\prod_i d_i \in \Sigma_{\tau}(A^*)$, $a \in \Sigma_{\tau}(A^*)$.
\end{proof}

\vspace{2cm}


\end{document}